\RequirePackage{ifthen}
\newboolean{MPA}
\setboolean{MPA}{false}

\ifthenelse {\boolean{MPA}}
{
\documentclass{svjour3}
\smartqed
\usepackage[margin=1.3in]{geometry}
} {
\documentclass[11pt,letterpaper]{article}
\usepackage[margin=1in]{geometry}
}

\usepackage{graphicx} 

\usepackage{amsmath}
\usepackage{amssymb}
\ifthenelse {\boolean{MPA}} {} {\usepackage{amsthm}}
\usepackage{hyperref}
\usepackage{xcolor}
\usepackage{epsfig}
\usepackage{tikz}
\usetikzlibrary{calc}
\usepackage{enumitem}

\usepackage{enumitem}
\setlist{leftmargin=6mm,nolistsep,noitemsep}

\DeclareMathOperator{\env}{env}

\def\tw{{\rm tw}}
\def\poly{{\rm poly}}

\newcommand{\ie}{i.e., }

\def\R{{\mathbb R}}

\def\Q{{\mathbb Q}}
\def\X{{\mathcal X}}
\def\A{{\mathcal A}}

\def\H{{\mathcal H}}

\ifthenelse {\boolean{MPA}}
{

\let\svjproof\proof
\let\endsvjproof\endproof
\renewenvironment{proof}[1][Proof]
{\def\proofname{#1}\svjproof}
{\qed \endsvjproof}

\newcounter{claim} 
\renewenvironment{claim}[1][]
{\refstepcounter{claim} \begin{trivlist} \item[] {\bf Claim~\theclaim}\space#1 \itshape}
{\end{trivlist}}

\newenvironment{cpf}
{\begin{trivlist} \item[] {\em Proof of claim }}
{$\hfill\diamond$ \end{trivlist}}

\journalname{Mathematical Programming A}

\newtheorem{observation}{Observation}

} {

\newtheorem{theorem}{Theorem}
\newtheorem{corollary}{Corollary}
\newtheorem{proposition}{Proposition}

\newtheorem{lemma}{Lemma}
\newtheorem{claim}{Claim}

\theoremstyle{remark}
\newtheorem{remark}{Remark}

\theoremstyle{definition} 
\newtheorem{example}{Example}

\newenvironment{cpf}
{\begin{trivlist} \item[] {\em Proof of claim. }}
{$\hfill\diamond$ \end{trivlist}}

}

\newcommand{\NP}{\mathcal {NP}}
\renewcommand{\L}{\mathcal L}

\newcommand{\ch}{\mathrm{ch}}

\date{}

\title{Treewidth and the complexity of box-constrained quadratic programs}

\ifthenelse {\boolean{MPA}}
{
\titlerunning{Treewidth and the complexity of box-constrained quadratic programs}

\author{Alberto Del Pia \and Aida Khajavirad}

\institute{Alberto Del Pia \at
              Department of Industrial and Systems Engineering \& Wisconsin Institute for Discovery, 
              University of Wisconsin-Madison.
              E-mail: {\tt delpia@wisc.edu}.
           \and
           Aida Khajavirad \at
              Department of Industrial and Systems Engineering,
              Lehigh University.
              E-mail: {\tt aida@lehigh.edu}.
}
}
{
\author{Alberto Del Pia
\thanks{Department of Industrial and Systems Engineering \& Wisconsin Institute for Discovery,
             University of Wisconsin-Madison.
             E-mail: {\tt delpia@wisc.edu}.
             }
\and
Aida Khajavirad
\thanks{Department of Industrial and Systems Engineering,
             Lehigh University.
             E-mail: {\tt aida@lehigh.edu}.
             }
}
}

\begin{document}

\maketitle
\begin{abstract}
    We consider the problem of minimizing a sparse quadratic function over the unit hypercube. In binary quadratic programming, treewidth of the interaction graph is a central parameter for tractability: bounded treewidth yields polynomial-time solvability. Motivated by this fact, we investigate whether treewidth plays a similar role when the binary domain is replaced by the unit hypercube. We show that the situation is strikingly different. If the interaction graph is a forest, we give a strongly polynomial-time algorithm based on dynamic programming and a structural analysis of the resulting univariate value functions, which are shown to be concave and piecewise quadratic with linearly many pieces. However, the problem becomes strongly $\NP$-hard already when the interaction graph has treewidth two. We then identify substantially more general polynomial-time solvable classes whose interaction graphs may have unbounded treewidth. Our approach exploits the fact that there exists an optimal solution in which every variable with a nonpositive coefficient for its square term is binary-valued, thereby separating the problem into a combinatorial part and a genuinely continuous part. We show that tractability can be recovered by controlling the complexity of these two parts and their interaction, rather than the treewidth of the entire interaction graph.
\ifthenelse {\boolean{MPA}}
{
\keywords{Box-constrained quadratic programming \and Sparsity \and Treewidth \and Dynamic programming \and Polynomial-time algorithm}
\subclass{MSC 90C20 \and 90C26 \and 90C39 \and 90C60}
} {}
\end{abstract}

\medskip
\ifthenelse {\boolean{MPA}}
{}{
\emph{Keywords:} box-constrained quadratic programming, sparsity, treewidth, dynamic programming, polynomial-time algorithm.
}

\section{Introduction}

We consider a box-constrained quadratic program:
\begin{align}\label{eq:QP}
\tag{QP}
\min \quad & x^\top Q x + c^\top x \\
{\rm s.t.} \quad & x \in [0,1]^n, \nonumber
\end{align}
where $c \in \Q^n$ and $Q=(q_{ij}) \in \Q^{n\times n}$ is a symmetric matrix. 
It is well known that Problem~\eqref{eq:QP} is $\NP$-hard in general, as it contains the maximum cut problem as a special case~\cite{HorTuy96}. In this paper, we are interested in obtaining sufficient conditions under which Problem~\eqref{eq:QP} can be solved in polynomial time. Throughout this paper, we consider the standard Turing bit model as our model of computation. If the number of variables is fixed, then the problem can be solved in strongly polynomial time using a simple face enumeration argument \cite{Vav90,dPDeyMol17MPA}.
If $Q$ is positive semidefinite, then Problem~\eqref{eq:QP} is a convex optimization problem, and can be solved in polynomial time using the ellipsoid method~\cite{TarKha79}. If $Q$ is positive semidefinite and tridiagonal, Pang and Han~\cite{PanHan23} give a strongly polynomial algorithm for Problem~\eqref{eq:QP}. In~\cite{hladik21}, the authors prove that Problem~\eqref{eq:QP} can be solved in polynomial time if the rank of $Q$ is fixed.
If $q_{ii} \leq 0$ for some $i \in [n]:=\{1,\ldots,n\}$, then it can be checked that there exists an optimal solution $x^*$ of Problem~\eqref{eq:QP} at which $x^*_i \in \{0,1\}$. It then follows that if $q_{ii} \leq 0$ for all $i \in [n]$, to find an optimal solution of Problem~\eqref{eq:QP},  we can solve the \emph{binary quadratic program}:
\begin{align}\label{eq:BQP}
\tag{BQP}
\min \quad & x^\top Q x + c^\top x \\
{\rm s.t.} \quad & x \in \{0,1\}^n. \nonumber
\end{align}
Problem~\eqref{eq:BQP} has been extensively studied by the discrete optimization community~\cite{Pad89,CraHanJau90,BorHam02}. 
If the objective function is submodular, \ie $q_{ij} \leq 0$ for all $1\leq i<j \leq n$, then Problem~\eqref{eq:BQP} can be solved in strongly polynomial time~\cite{schrijver2000}.
An important line of research exploits the sparsity of the objective function of Problem~\eqref{eq:BQP} to obtain sufficient conditions for its polynomial-time solvability.  
To this end, we define, for Problem~\eqref{eq:QP} or Problem~\eqref{eq:BQP}, the \emph{interaction graph} as the graph $G=(V,E)$ with vertex set $V:=[n]$, containing one vertex $i$ for each variable $x_i$, and with edge set $E$, where two distinct vertices $i$ and $j$ are adjacent if and only if $q_{ij}\neq0$. The most general result in this literature states that if the graph $G$ has bounded treewidth, then Problem~\eqref{eq:BQP} can be solved in strongly polynomial time~\cite{CraHanJau90}. In fact, the results of~\cite{ChaSreHar08,dpAk24} imply that bounded treewidth is essentially a necessary and sufficient condition for polynomial-time solvability of Problem~\eqref{eq:BQP}. 
See~\cite{Cra88tr,Mic17,Pun22,dP26MPA} for additional classes of polynomial-time solvable binary quadratic programs.

These results naturally raise the question of whether treewidth plays a similar role when the binary domain is replaced by the unit hypercube. Our goal is therefore to understand \emph{the relationship between the treewidth of the interaction graph and the computational complexity of sparse box-constrained quadratic programs}.
The work most closely related to ours is Khajavirad~\cite{Aida26p}, who studies polynomial-time solvability of sparse polynomial optimization problems over the unit hypercube. Specializing to quadratic objectives, the paper observes that variables with nonpositive diagonal quadratic coefficients can be restricted to binary values at optimality, and exploits this structure to separate the problem into a discrete part and a collection of continuous components. The continuous components are eliminated individually under a condition controlling their degree of nonconvexity, yielding a structured binary quadratic optimization problem. Polynomial-time solvability is then obtained under suitable bounds on the treewidth of the resulting interaction graph. In this paper, we substantially extend this result. See~\cite{KimKoj03,SojLav14,QiuYil24,DeyIda25,Aida26} for sufficient conditions under which certain semidefinite programming relaxations of Problem~\eqref{eq:QP} are tight; namely, the optimal value of the relaxation coincides with that of the original problem.

\medskip
Our main contributions can be summarized as follows.
\medskip

\begin{itemize}[leftmargin=0.8cm]

\item [$(i)$] We prove that Problem~\eqref{eq:QP} can be solved in strongly polynomial time if its interaction graph has treewidth at most one, that is, if it is a forest. Our algorithm is based on dynamic programming over the forest and a structural analysis of the resulting univariate value functions, which are shown to be concave and piecewise quadratic with linearly many pieces. The resulting algorithm performs $O(n^2)$ arithmetic operations and comparisons (see Theorem~\ref{thm:main}).

\item [$(ii)$] We prove that the above tractability result is essentially sharp with respect to treewidth: Problem~\eqref{eq:QP} is strongly $\NP$-hard when the interaction graph has treewidth two. The hardness result holds even when all coefficients are integral and uniformly bounded, namely $\|Q\|_{\max}\leq 5$ and $\|c\|_{\infty}\leq 4$ (see Theorem~\ref{thm:strong-NP-hardness-tw2}).

\item [$(iii)$] We obtain more general sufficient conditions for polynomial-time solvability of Problem~\eqref{eq:QP} that apply to interaction graphs of potentially unbounded treewidth. We first exploit the fact that there exists an optimal solution in which every variable with a nonpositive diagonal quadratic coefficient is binary-valued, thereby separating the problem into a combinatorial part and a continuous part. We then derive tractability conditions based on the complexity of the connected components induced by these two parts and on the structure of their interaction (see Theorem~\ref{th:oneBlock} and Theorem~\ref{th:oneSwitch}).
\end{itemize}
\medskip

Our results reveal a sharp transition in the role of treewidth for box-constrained quadratic programs: if the interaction graph is a forest, the problem is strongly polynomial-time solvable, while strong $\NP$-hardness already arises at treewidth two. This contrasts with binary quadratic programming, which is polynomial-time solvable on graphs of bounded treewidth. This difference can be attributed to the genuinely continuous part of the problem: variables with nonpositive square coefficients can be chosen binary at optimality, whereas variables with positive square coefficients cannot, in general, be reduced in this way. This simple observation separates the problem into a combinatorial part and a genuinely continuous part. Our results then show that tractability can be recovered by controlling the complexity of these two parts and their interaction, rather than the treewidth of the entire graph. Perhaps surprisingly, our results indicate that, at treewidth two, enlarging the feasible set from $\{0,1\}^n$ to $[0,1]^n$ while keeping the same quadratic objective function can turn a polynomial-time solvable problem into a strongly $\NP$-hard one.

It is important to note that all of the results in this paper extend directly to the mixed-binary setting; that is, the problem of minimizing a quadratic function over $x_i \in [0,1]$ for all $i \in I_1$ and $x_i \in \{0,1\}$ for all $i \in I_2$, where $I_1$ and $I_2$ partition $[n]$. Indeed, consider a binary variable $x_i$ for some $i \in I_2$. 
Replace $q_{ii} x^2_i+ c_i x_i$ in the objective function by $(q_{ii}+c_i+1)x_i-x_i^2$, and relax $x_i\in\{0,1\}$ to $x_i\in[0,1]$. 
Apply this transformation successively to all binary variables.
It can be checked that the set of optimal solutions of the original mixed-binary problem coincides with that of the box-constrained quadratic program. The key to this transformation is that since we only modify diagonal and linear coefficients, the interaction graph remains unchanged.

\paragraph{Organization.} The remainder of the paper is structured as follows. In Section~\ref{sec:tw1} we present a strongly polynomial-time algorithm for box-constrained quadratic programs whose interaction graph is a forest. In Section~\ref{sec: NP-hard}, we show that if the treewidth of the interaction graph is two, then Problem~\eqref{eq:QP} is strongly $\NP$-hard, in general. In Section~\ref{sec: poly} we obtain sufficient conditions for polynomial-time solvability of box-constrained quadratic programs whose interaction graphs may have unbounded treewidth.

\paragraph{Notation.} Throughout the paper, we make use of the following notation. For two subsets $S,S'\subseteq[n]$, we denote by $Q_{S,S'}$ the submatrix of $Q$ with rows indexed by $S$ and columns indexed by $S'$, and we abbreviate $Q_S:=Q_{S,S}$ for the principal submatrix of $Q$ indexed by $S$. In particular, for $i\in[n]$, the vector $Q_{S,i}$ is the column of $Q$ indexed by $i$, restricted to the rows in $S$. For $S\subseteq[n]$, we denote by $c_S$ and $x_S$ the restriction of $c$ and $x$, respectively, to the coordinates in $S$. Given a graph $G=(V,E)$ and $S\subseteq V$, we denote by $G_S$ the subgraph of $G$ induced by $S$.
We denote by $\langle\cdot\rangle$ the \emph{lengths} (also known as encoding lengths or sizes) of rational numbers, vectors and matrices, as defined in~\cite{GroLovSch88,SchBookIP}; the \emph{input length} of Problem~\eqref{eq:QP} is $\L:=\langle Q\rangle+\langle c\rangle$.
Throughout the paper, $\log$ denotes the base-$2$ logarithm.

\section{Treewidth one: A strongly polynomial-time algorithm}
\label{sec:tw1}
In this section, we focus on the case where the interaction graph $G$ of Problem~\eqref{eq:QP} has treewidth at most one, that is, $G$ is a forest, and we present a strongly polynomial-time algorithm for it.
Recall that an algorithm is \emph{strongly polynomial} when the number of arithmetic operations and comparisons that it performs is bounded by a polynomial in the number of input numbers, independently of their values, and every intermediate number has length polynomial in the input length.

\begin{theorem}
\label{thm:main}
Assume that the interaction graph $G$ is a forest. Then an optimal solution of Problem~\eqref{eq:QP} can be found by a strongly polynomial algorithm that performs $O(n^2)$ arithmetic operations and comparisons.
\end{theorem}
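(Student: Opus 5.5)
We solve each connected component (a tree) separately and run dynamic programming from the leaves to a chosen root. Fix a root $r$ and, for each vertex $v$, let $T_v$ be the subtree rooted at $v$ and $\ch(v)$ its children. The value function of $v$ is
\[
f_v(t):=\min\Bigl\{\sum_{i,j\in T_v} q_{ij}x_ix_j+\sum_{i\in T_v}c_ix_i \;:\; x_{T_v}\in[0,1]^{T_v},\ x_v=t\Bigr\},\qquad t\in[0,1].
\]
Because $G$ is a tree, the only term coupling $T_u$ (for $u\in\ch(v)$) with the rest of the tree is $2q_{vu}x_vx_u$. This gives the recursion
\[
f_v(t)=q_{vv}t^2+c_vt+\sum_{u\in\ch(v)} g_u(t),\qquad g_u(t):=\min_{s\in[0,1]}\bigl(f_u(s)+2q_{vu}ts\bigr).
\]
The optimal value is $\min_{t\in[0,1]}f_r(t)$. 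An optimal solution is then recovered top-down by storing, for each piece of $g_u$, the minimizer $s$ as a function of $t$.

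\textbf{Structure of $g_u$.} Each $g_u$ is a pointwise minimum of functions that are affine in $t$, so it is concave. We will show by induction that $f_u$ is piecewise quadratic on $[0,1]$ with $O(|T_u|)$ breakpoints. The key is the structure of $\min_s (f_u(s)+\lambda s)$ with $\lambda=2q_{vu}t$. On each quadratic piece $[a_k,a_{k+1}]$ of $f_u$ with leading coefficient $\alpha_k$, the inner minimizer is one of two kinds:
\begin{itemize}
\item an endpoint, which contributes a function affine in $\lambda$;
\item if $\alpha_k>0$, an interior stationary point $s=-(\beta_k+\lambda)/(2\alpha_k)$, valid on an interval of $\lambda$, which contributes a concave quadratic in $\lambda$.
\end{itemize}
Thus $g_u$ is the lower envelope of the affine functions coming from breakpoints and the concave quadratics coming from convex pieces. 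The minimizer $s^*(\lambda)$ is monotone nonincreasing in $\lambda$, since the objective has decreasing differences in $(s,\lambda)$. As $\lambda$ increases, $s^*$ therefore sweeps the pieces of $f_u$ monotonically, visiting each piece in at most one $\lambda$-interval. Hence $g_u$ has at most $O(\#\text{pieces of }f_u)$ pieces, and it can be computed by one sweep over the pieces, essentially a Legendre-type conjugate of $f_u$ restricted to $[0,1]$.

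\textbf{Counting and correction.} Summing, $f_v$ is $q_{vv}t^2+c_vt$ plus $\sum_u g_u$. Merging breakpoints gives $1+\sum_u \#\text{pieces}(g_u)$ pieces, so $\#\text{pieces}(f_v)\le |T_v|+O(1)$, or linear with a suitable constant. Note that the statement calls the value functions concave: the $g_u$ are concave, and $f_v$ is concave plus a quadratic. Whether we state concavity for $g_u$ or for $f_v$ is a detail; what matters is the linear piece count.

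\textbf{Cost.} At a vertex $v$ the work is $O(\sum_u|T_u|)=O(|T_v|)$, so the total is $O(\sum_v |T_v|)$, which can be $\Theta(n^2)$ on a path. This is within the claimed $O(n^2)$ bound on arithmetic operations and comparisons.

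\textbf{Strong polynomiality.} Breakpoints of $g_u$ are values of $t$ where two candidate minimizers tie, which requires solving quadratic equations and could introduce irrational numbers. I expect this to be the main obstacle. I would handle it by never computing these breakpoints explicitly in a way that compounds bit size. Instead, I would represent each $f_v$ by its sequence of candidate pieces, whose coefficients are simple rational expressions in the input: the stationary-point formulas compose as linear fractional maps along root paths. I would then argue that each coefficient is a rational function of bounded degree in the input entries along a single path, so its encoding length grows additively rather than multiplicatively.

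Alternatively, I would use concavity to avoid breakpoints entirely. Since $g_u$ is concave in $t$ on each interval where $f_v$ is a sum of fixed pieces, minima over $t$ at the root occur at breakpoints or endpoints. The required comparisons can be done by evaluating candidate values, which are rational in the input, and comparing them exactly. The bit-length bookkeeping of the composed maps is where care is needed: I would show that every intermediate number equals an entry or ratio of minors of a principal submatrix of $Q$ along a path, which is polynomially bounded by Cramer's rule.
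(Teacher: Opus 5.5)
Your dynamic program is the paper's: your $f_v$ and $g_u$ are its $\varphi_v$ and $\mu_u$. The monotone-argmin idea can replace the paper's convex-envelope counting. As written, however, the piece count does not close.

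\textbf{The piece count.} A bound of the form ``$g_u$ has $O(\#\text{pieces of } f_u)$ pieces'' compounds along a path. Your own bullets allow an affine piece of $g_u$ for each of the $K_u+1$ breakpoints of $f_u$ (including $0,1$) and a quadratic piece for each of its $K_u$ pieces. That is up to $2K_u+1$ pieces, so $K_v\le 1+2\sum_{u\in\ch(v)}K_u$, which is exponential in the depth. You need the additive bound $K_u+2$. The missing observation is that $f_u$ is a quadratic plus a sum of concave functions, so all its kinks are concave. A concave kink $a\in(0,1)$ is never a minimizer of $f_u(s)+\lambda s$, since that would require $f_u'(a^-)+\lambda\le 0\le f_u'(a^+)+\lambda$. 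Smooth breakpoints and interior points of affine pieces are minimizers for at most one $\lambda$ each, and interior points of strictly concave pieces never are. Combined with your monotonicity, only $0$, $1$ and the strictly convex pieces contribute nondegenerate $\lambda$-intervals. Hence $g_u$ has at most $K_u+2$ pieces (Lemma~\ref{lem:conjugate}), and $K_v\le 2|T_v|-1$. This bound is tight (Remark~\ref{rem:tight}), so your $|T_v|+O(1)$ is false.

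\textbf{The sweep.} Monotonicity alone does not give the $O(K_u)$ ``single sweep'' you assert. As $\lambda$ grows the minimizer jumps across bridges to non-adjacent pieces. Locating the landing piece naively costs $O(K_u)$ per jump, which gives only $O(n^3)$ on a path. The paper gets $O(K_u)$ from a stack-based convex-envelope sweep with amortized pops (Lemma~\ref{lem:envelope}).

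\textbf{Exact comparisons.} Strong polynomiality is the real content of the theorem, and neither of your strategies supplies it. Computing $g_u$ and merging the $g_u$ into $f_v$ are sequences of comparisons between breakpoints, which are roots of rational quadratics. The sweep also needs tangents from such irrational points to parabolas, which produces nested radicals of degree four. You give no way to compare these exactly with $O(1)$ rational operations. The paper represents breakpoints as $\tau_0+\tau_1\sqrt D$ via their defining quadratics. It decides signs of polynomials in nested square roots by repeated squaring (Lemma~\ref{lem:compare}), and computes every sweep endpoint from the original arc data so the nesting depth stays at most two.

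Your alternative of avoiding breakpoints does not work. Breakpoints are unavoidable in the merge. Moreover, the claim that the root minimum lies at a breakpoint or endpoint is false: interior minima are stationary points of strictly convex arcs, and concave kinks are never minima (Lemma~\ref{lem:candidates}).

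\textbf{Bit lengths.} Your Cramer's-rule intuition is right, but its justification is the missing idea. Composing stationary-point formulas along the recursion could a priori double lengths at each level, and the coefficients are not of bounded degree in the data. The paper shows (Lemmas~\ref{lem:pdface} and~\ref{lem:activeset}) that every arc of $\mu_v$ equals $t\mapsto\min\{F_v(y;t):y_{T_v\setminus S}=\beta\}$ for some $S\subseteq T_v$ with $Q_S\succ0$ and some $\beta\in\{0,1\}^{T_v\setminus S}$. Here $S$ is an arbitrary subset of the subtree, not a path. The argument takes an optimal solution with the maximum number of coordinates in $\{0,1\}$. Positive definiteness of the free block gives a closed form through $Q_S^{-1}$, and hence polynomial length, independently of the recursion.
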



Theorem~\ref{thm:main} is proved by a leaf-to-root dynamic programming algorithm whose univariate value functions are concave and piecewise quadratic, with a number of arcs that adds, rather than multiplies, at vertices with several children. The algorithm performs only arithmetic operations $(+,-,\times,\div)$ and comparisons on rational numbers; throughout this section, an \emph{operation} is one of these. The value functions are described by rational numbers and by algebraic numbers of degree at most two over $\Q$, which are in general irrational and are represented exactly as $\tau_0+\tau_1\sqrt D$ with rational $\tau_0,\tau_1,D$; auxiliary numbers of degree at most four, represented by nested square roots, arise while a single value function is computed and are discarded afterwards. No root is ever extracted: comparing any two of these numbers takes $O(1)$ operations.

We denote by $f(x)$ the objective function of Problem~\eqref{eq:QP}. Problem~\eqref{eq:QP} decomposes on the connected components $V_1,\dots,V_k$ of $G$, and since $G$ is a forest, each resulting subproblem is an instance of Problem~\eqref{eq:QP} whose interaction graph is a tree. As these components can be identified with $O(n^2)$ operations and $\sum_{i=1}^k|V_i|^2\le n^2$, it suffices to prove Theorem~\ref{thm:main} when $G$ is a tree. Henceforth, we assume that $G$ is a tree, and we write $T=(V,E):=G$.


The rest of this section is organized as follows. Section~\ref{sec:valuefunctions} sets up the dynamic program; Section~\ref{sec:envelope} is a self-contained study of convex envelopes of piecewise quadratic functions with concave kinks; Section~\ref{sec:arithmetic} proves that the value functions are piecewise quadratic with a linear number of arcs, whose coefficients are rationals of polynomial length; Section~\ref{sec:computation} shows how to compute them exactly and proves Theorem~\ref{thm:main}. Section~\ref{sec:relation} relates our approach to the literature, and in particular compares it with that of~\cite{BhaFatGomKuc26}. Finally, Section~\ref{sec:quartic} shows that a similar tractability result does not hold for higher degree polynomials; namely, minimizing a quartic over the unit hypercube is strongly $\NP$-hard even when the interaction graph is a path.

\subsection{Value functions and the dynamic program}
\label{sec:valuefunctions}

Root $T$ at an arbitrary vertex $r$; for $v\ne r$, write $p(v)$ for the \emph{parent} of $v$, write $\ch(v):=\{u\in V: p(u)=v\}$ for the set of \emph{children} of $v$, and write $T_v$ for the vertex set of the subtree rooted at $v$. The \emph{depth} of $u$ is the number of edges on the path from $u$ to $r$, and the \emph{height} of $v$ is the largest number of edges on a path from $v$ to a vertex of $T_v$. Separate $f$ along the tree:
\[
f(x) = \sum_{v\in V}\big(q_{vv}x_v^2 + c_vx_v\big) + \sum_{v\ne r} 2q_{p(v)v}\,x_{p(v)}x_v .
\]
We abbreviate $s_v:=2q_{p(v)v}$ for $v\ne r$, which is nonzero since $\{p(v),v\}$ is an edge of $G$, and set $s_r:=0$. For $v\in V$ let $e_v\in\mathbb R^{T_v}$ be the $v$-th unit vector, and define
\begin{equation}
\label{eq:Fv}
F_v(y;t):=y^\top Q_{T_v}\,y+\big(c_{T_v}+s_v t\,e_v\big)^\top y,
\qquad y\in\mathbb R^{T_v},\ t\in\mathbb R.
\end{equation}
Then $F_v(\cdot\,;t)$ is the objective of the subproblem on the subtree rooted at $v$, with the parent variable set to $t$, and $F_r(y;0)=f(y)$. Since the sets $T_j$, $j\in\ch(v)$, partition $T_v\setminus\{v\}$, and the edges of $T$ between $v$ and $T_v\setminus\{v\}$ are the edges $\{v,j\}$, $j\in\ch(v)$, we have
\begin{equation}
\label{eq:Fsplit}
F_v(y;t)=\big(q_{vv}y_v^2+c_vy_v\big)+\sum_{j\in \ch(v)}F_j\big(y_{T_j};y_v\big)+s_v\,t\,y_v .
\end{equation}

We define two families of univariate functions, by induction on the height of $v$: the \emph{value function} $\mu_v$, one for each $v\ne r$, and an auxiliary function $\varphi_v$, one for each $v\in V$, given by
\begin{equation}
\label{eq:message}
\varphi_v(x) \;:=\; q_{vv}x^2 + c_vx + \sum_{j\in \ch(v)} \mu_j(x),
\qquad
\mu_v(t) \;:=\; \min_{x\in[0,1]}\ \Big[\varphi_v(x) + s_v\,t\,x\Big].
\end{equation}
The functions $\varphi_v$ are defined on all of $\mathbb R$, but only their restriction to $[0,1]$ enters \eqref{eq:message}.

We will need some terminology for univariate functions. We call a continuous function $g:I\to\mathbb R$ on an interval $I\subseteq\mathbb R$ (possibly unbounded) \emph{piecewise quadratic} (pw-quadratic) if $I$ splits into finitely many nondegenerate intervals (\emph{arcs}), each closed in $I$ and possibly unbounded, on each of which $g$ is a polynomial of degree $\le 2$; the shared arc endpoints interior to $I$ are \emph{breakpoints}. We always take the arcs to be maximal, so that no two consecutive ones carry the same polynomial; the breakpoints are then exactly the points of $I$ at which $g$ fails to be a polynomial of degree $\le2$ on a neighborhood, and the decomposition into arcs is unique. A breakpoint $x$ is a \emph{kink} if $g'(x^-)\ne g'(x^+)$; a kink is \emph{concave} if $g'(x^-)>g'(x^+)$. An arc is \emph{strictly convex}, \emph{affine}, or \emph{strictly concave} according to the sign of its quadratic coefficient.

We first record a standard fact about concave functions; see Theorems~10.1 and~24.1 in~\cite{Roc70}, applied to $-g$.

\begin{lemma}
\label{lem:concavejumps}
A concave function $g:\mathbb R\to\mathbb R$ is continuous, has one-sided derivatives everywhere, and satisfies $g'(x^-)\ge g'(x^+)$ for every $x$.
\end{lemma}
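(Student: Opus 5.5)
The plan is to reduce the statement to the corresponding facts about convex functions by applying them to $h:=-g$, which is convex and finite on all of $\mathbb R$. Continuity, existence of one-sided derivatives, and the inequality between them are all invariant under this sign change, up to reversing the inequality. So it suffices to show that a finite convex function $h$ on $\mathbb R$ is continuous, has one-sided derivatives $h'(x^-)$ and $h'(x^+)$ everywhere, and satisfies $h'(x^-)\le h'(x^+)$.

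The key tool is the monotonicity of difference quotients. For fixed $x$, convexity of $h$ implies that the slope $\sigma(y):=(h(y)-h(x))/(y-x)$ is nondecreasing in $y$ on $\mathbb R\setminus\{x\}$. This is the standard three-chord inequality, obtained by writing the middle point as a convex combination of the outer two. In particular, every slope with $y<x$ is at most every slope with $y>x$.

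The three claims then follow in order.
\begin{enumerate}
\item Since $\sigma$ is monotone and bounded on each side of $x$ (by any slope taken on the opposite side), the limits $h'(x^+)=\lim_{y\downarrow x}\sigma(y)$ and $h'(x^-)=\lim_{y\uparrow x}\sigma(y)$ exist and are finite.
\item Passing to the limit in ``left slopes $\le$ right slopes'' gives $h'(x^-)\le h'(x^+)$.
\item Finiteness of the one-sided derivatives yields continuity: $h(y)-h(x)=\sigma(y)(y-x)\to0$ as $y\to x$ from either side, because $\sigma(y)$ stays bounded near $x$.
\end{enumerate}

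Translating back to $g=-h$ gives $g'(x^\pm)=-h'(x^\pm)$, hence $g'(x^-)\ge g'(x^+)$, which is the lemma. Alternatively, as the paper indicates, one can simply cite Theorems~10.1 and~24.1 in~\cite{Roc70} for $-g$.

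The only step needing care is the three-chord inequality. Everything else is monotone-limit bookkeeping, so I expect no real obstacle.
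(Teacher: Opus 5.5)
Your proof is correct and follows the paper's route. The paper's proof consists only of applying Theorems~10.1 and~24.1 of \cite{Roc70} to $-g$, and your three-chord argument is a self-contained one-dimensional proof of those facts, in which continuity also follows directly from the difference quotients being bounded near $x$, so the separate continuity theorem is not needed.
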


\begin{lemma}
\label{lem:concave}
For every $v\in V$, the function $\varphi_v$ of \eqref{eq:message} is continuous on $\mathbb R$. For every $v\ne r$ and every $t\in\mathbb R$, the minimum in \eqref{eq:message} is attained, and $\mu_v:\mathbb R\to\mathbb R$ is concave.
\end{lemma}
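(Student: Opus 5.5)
We argue by induction on the height of $v$, proving the three claims together. The statement for $\varphi_v$ at a given $v$ uses the claims for its children $j\in\ch(v)$, whose heights are smaller. The statement for $\mu_v$ then uses the continuity of $\varphi_v$ at the same vertex.

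\emph{Continuity of $\varphi_v$.} If $v$ is a leaf, then $\ch(v)=\emptyset$ and $\varphi_v(x)=q_{vv}x^2+c_vx$ is a polynomial, hence continuous. Otherwise, by the induction hypothesis each $\mu_j$ with $j\in\ch(v)$ is a concave function $\mathbb R\to\mathbb R$, and Lemma~\ref{lem:concavejumps} tells us it is continuous on $\mathbb R$. Then $\varphi_v$, being a quadratic polynomial plus a finite sum of continuous functions, is continuous on $\mathbb R$.

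\emph{Attainment.} Fix $v\ne r$ and $t\in\mathbb R$. The function $x\mapsto \varphi_v(x)+s_v t x$ is continuous on the compact interval $[0,1]$. By Weierstrass' theorem the minimum in \eqref{eq:message} is therefore attained, so $\mu_v(t)$ is a well-defined real number for every $t\in\mathbb R$.

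\emph{Concavity of $\mu_v$.} For each fixed $x\in[0,1]$, the map $t\mapsto \varphi_v(x)+s_v x\,t$ is affine in $t$, and $\mu_v$ is the pointwise minimum of this family of affine functions. A pointwise infimum of affine functions is concave wherever it is finite, and here it is finite everywhere by the attainment step. For completeness, the direct check is as follows. For $t_1,t_2\in\mathbb R$ and $\lambda\in[0,1]$, let $x^*$ attain the minimum at $t=\lambda t_1+(1-\lambda)t_2$. Then
\[
\mu_v(t)=\lambda\big(\varphi_v(x^*)+s_vt_1x^*\big)+(1-\lambda)\big(\varphi_v(x^*)+s_vt_2x^*\big)\ge\lambda\mu_v(t_1)+(1-\lambda)\mu_v(t_2).
\]

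No step is a real obstacle. The only point needing care is the order of the induction: continuity of $\varphi_v$ requires concavity of the children's value functions, which Lemma~\ref{lem:concavejumps} turns into continuity, and that continuity is what makes attainment and finiteness of $\mu_v$ possible. The root needs no separate argument, since only $\varphi_r$ is defined there and its continuity follows in the same way from the claims for its children.
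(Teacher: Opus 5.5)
Your proof is correct and follows the paper's argument essentially verbatim. Both use induction on height, get continuity of $\varphi_v$ from the children's concave $\mu_j$ via Lemma~\ref{lem:concavejumps}, obtain attainment by compactness of $[0,1]$, and derive concavity of $\mu_v$ as a finite pointwise minimum of affine functions of $t$; your explicit two-point check of concavity is a harmless addition.
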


\begin{proof}
Induction on the height of $v$. Let $v\in V$ and assume the lemma for all $j\in\ch(v)$, a vacuous assumption if $v$ is a leaf. Each $\mu_j$ is then continuous by Lemma~\ref{lem:concavejumps}, hence so is $\varphi_v$, and the minimum in \eqref{eq:message} is attained by compactness of $[0,1]$. Moreover, $\mu_v$ is the pointwise minimum of the affine functions $t\mapsto\varphi_v(x)+s_vtx$, $x\in[0,1]$, and it is finite, hence concave.
\end{proof}

The next lemma justifies the recursion \eqref{eq:message}.

\begin{lemma}
\label{lem:dp}
For every $v\ne r$ and every $t\in\mathbb R$,
\[
\mu_v(t) \;=\; \min\big\{F_v(y;t)\ :\ y\in[0,1]^{T_v}\big\},
\]
and $\min_{x\in[0,1]^n} f = \min_{x\in[0,1]}\varphi_r(x)$.
\end{lemma}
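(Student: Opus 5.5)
We argue by induction on the height of $v$, using the splitting identity \eqref{eq:Fsplit} together with the fact, established in Lemma~\ref{lem:concave}, that all minima in \eqref{eq:message} are attained. The inductive claim is that $\mu_v(t)=\min\{F_v(y;t): y\in[0,1]^{T_v}\}$ for every $v\ne r$ and every $t\in\mathbb R$. The minimum on the right exists because $F_v(\cdot\,;t)$ is continuous and $[0,1]^{T_v}$ is compact.

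Let $v\ne r$, and assume the claim for every $j\in\ch(v)$. If $v$ is a leaf, this assumption is vacuous. The box $[0,1]^{T_v}$ is the product of $[0,1]$, for the coordinate $y_v$, with the boxes $[0,1]^{T_j}$, $j\in\ch(v)$, because the sets $T_j$ partition $T_v\setminus\{v\}$. We fix $y_v=x\in[0,1]$. By \eqref{eq:Fsplit}, the remaining terms of $F_v(y;t)$ form a sum of functions $F_j(y_{T_j};x)$, each depending on a disjoint block of variables, plus the quantity $q_{vv}x^2+c_vx+s_vtx$, which does not depend on those blocks. The minimum of a separable sum over a product set is the sum of the individual minima, so
\[
\min_{y:\,y_v=x}F_v(y;t)=q_{vv}x^2+c_vx+s_vtx+\sum_{j\in\ch(v)}\min_{y_{T_j}\in[0,1]^{T_j}}F_j(y_{T_j};x).
\]
By the induction hypothesis, applied with parameter $x$ in place of $t$, the right-hand side equals $\varphi_v(x)+s_vtx$. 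Minimizing over $x\in[0,1]$ then gives $\mu_v(t)$ as defined in \eqref{eq:message}, and this outer minimum is attained by Lemma~\ref{lem:concave}.

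For the root we repeat the same computation with $v=r$ and $t=0$. By definition $s_r=0$ and $F_r(y;0)=f(y)$, so we obtain $\min_{x\in[0,1]^n}f=\min_{x\in[0,1]}\varphi_r(x)$. This minimum is attained because $\varphi_r$ is continuous, again by Lemma~\ref{lem:concave}.

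The only point requiring care is the exchange of minima. Writing the minimum over the product box as an iterated minimum, first over $x$ and then over the child blocks, is valid because every inner minimum is attained. This is exactly what the induction hypothesis and Lemma~\ref{lem:concave} supply, so no infimum subtleties arise.
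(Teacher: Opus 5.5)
Your proof is correct and takes essentially the same approach as the paper's. Both argue by induction on the height: fix $y_v=x$, split $F_v$ via \eqref{eq:Fsplit} over the disjoint blocks $T_j$, apply the inductive hypothesis with parameter $x$, and treat the root as the same computation with $s_r=0$ and $F_r(y;0)=f(y)$. Your added discussion of attainment is harmless but not essential, since the iterated-minimum identity already holds for infima, and compactness together with Lemma~\ref{lem:concave} gives attainment.
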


\begin{proof}
Induction on the height of $v$. By \eqref{eq:Fsplit}, minimizing $F_v(y;t)$ over $y\in[0,1]^{T_v}$ amounts to fixing $y_v=x\in[0,1]$ and minimizing each $F_j(\cdot\,;x)$ over $[0,1]^{T_j}$ separately, the blocks $T_j$, $j\in \ch(v)$, being disjoint; by the inductive hypothesis the $j$-th of these minima is $\mu_j(x)$, so that the whole minimum equals $\min_{x\in[0,1]}[\varphi_v(x)+s_vtx]=\mu_v(t)$ by \eqref{eq:message}. The root identity is this computation for $v=r$, where $s_r=0$ and $F_r(y;0)=f(y)$.
\end{proof}

\subsection{The convex envelope of a piecewise quadratic function with concave kinks}
\label{sec:envelope}

This subsection is self-contained univariate convex analysis, with no reference to the tree: the results below concern the class $\mathcal Q$ of pw-quadratic functions $\varphi:[0,1]\to\mathbb R$ all of whose kinks are concave, and the symbols $\varphi$ and $\mu$ no longer refer to the specific $\varphi_v$ and $\mu_v$ of \eqref{eq:message}.

We recall that the \emph{convex envelope} $\env\varphi$ of a continuous function $\varphi:[0,1]\to\mathbb R$ is the greatest convex function $\le\varphi$ on $[0,1]$, and we say that $x_0\in[0,1]$ is a \emph{contact point} of $\varphi$ if $(\env\varphi)(x_0)=\varphi(x_0)$; the set of all contact points is the \emph{contact set} of $\varphi$. The \emph{conjugate} of a continuous function $g:[0,1]\to\mathbb R$ is $g^*(\sigma):=\max_{x\in[0,1]}\big(\sigma x-g(x)\big)$, $\sigma\in\mathbb R$, which is the Fenchel conjugate of $g$ extended by $+\infty$ outside $[0,1]$. These notions, and all the results of this subsection, hold verbatim on an arbitrary compact interval $[\lambda,\rho]$ with $\lambda<\rho$: the affine bijection $x=\lambda+(\rho-\lambda)\hat x$ onto $[0,1]$ preserves convexity, arcs, kinks and their concavity. 

We first collect the basic properties of the convex envelope and describe its contact set with $\varphi$; we then use this description for the function obtained by minimizing $\varphi(x)+stx$ over $[0,1]$, which depends on $\varphi$ only through $\env\varphi$ (Lemma~\ref{lem:conjugate}). We conclude with two facts used in Section~\ref{sec:computation}: where the minimum of a function of $\mathcal Q$ is attained (Lemma~\ref{lem:candidates}), and the replacement of strictly concave arcs by their chords (Lemma~\ref{lem:chord}).

\begin{lemma}
\label{lem:carrier}
Let $\varphi:[0,1]\to\mathbb R$ be continuous and $H:=\env\varphi$. Then $H$ is continuous on $[0,1]$, $H(0)=\varphi(0)$, $H(1)=\varphi(1)$, and
\begin{equation}
\label{eq:twopoint}
H(x)=\min\big\{\omega\varphi(x_1)+(1-\omega)\varphi(x_2)\ :\ \omega\in[0,1],\ x_1,x_2\in[0,1],\ \omega x_1+(1-\omega)x_2=x\big\}.
\end{equation}
On every maximal open interval on which $H<\varphi$, $H$ is affine, and $H=\varphi$ at the endpoints of that interval.
\end{lemma}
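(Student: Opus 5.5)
I will prove Lemma~\ref{lem:carrier} by working with the explicit function defined by the right-hand side of \eqref{eq:twopoint}. Call it $\tilde H$, and show that it is the greatest convex minorant of $\varphi$.

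\textbf{Step 1: $\tilde H$ is well defined and is a convex minorant of $\varphi$.} The minimum in \eqref{eq:twopoint} is attained, since the feasible set is compact and the objective is continuous. Taking $x_1=x_2=x$ gives $\tilde H\le\varphi$. Any convex $g\le\varphi$ satisfies $g(x)\le\omega g(x_1)+(1-\omega)g(x_2)\le\omega\varphi(x_1)+(1-\omega)\varphi(x_2)$, so $g\le\tilde H$. It remains to show that $\tilde H$ is convex. Consider the set $C:=\mathrm{conv}\{(x,z): x\in[0,1],\ z\ge\varphi(x)\}$. Its lower boundary function $h(x):=\inf\{z:(x,z)\in C\}$ is convex and is the largest convex minorant. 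By Carath\'eodory's theorem in $\mathbb R^2$, every point of $C$ is a convex combination of at most three points of the epigraph. Points on the lower boundary lie on an edge of the triangle, which reduces the count to two. Hence $h=\tilde H$, and therefore $H=\env\varphi=\tilde H$.

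\textbf{Step 2: endpoint values and continuity.} At $x=0$, the constraint $\omega x_1+(1-\omega)x_2=0$ with $x_i\ge0$ forces every point carrying positive weight to equal $0$, so $H(0)=\varphi(0)$; the case $x=1$ is symmetric. A convex function on $[0,1]$ is continuous on $(0,1)$ and upper semicontinuous at the endpoints. Lower semicontinuity follows from $H$ being the minimum of a continuous function over a compact-valued, continuously varying feasible set; by Berge's theorem the value function is continuous. Alternatively, one can note that $H$ is bounded below by the convex function $\min\varphi$ and squeeze it at the endpoints using the chords to the endpoint values.

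\textbf{Step 3: affine on intervals where $H<\varphi$.} Let $(a,b)$ be a maximal open interval with $H<\varphi$. By continuity of $H$ and $\varphi$, and since $H=\varphi$ at $0$ and $1$, we get $H(a)=\varphi(a)$ and $H(b)=\varphi(b)$. Take $x\in(a,b)$ and an optimal triple $(\omega,x_1,x_2)$ in \eqref{eq:twopoint}; since $H(x)<\varphi(x)$, we have $x_1<x<x_2$ with $\omega\in(0,1)$. I claim $H(x_i)=\varphi(x_i)$. Otherwise we could replace $\varphi(x_i)$ by $H(x_i)$, which is itself a two-point combination, and produce a strictly smaller combination representing $H(x)$, contradicting the fact that $H$ is a convex minorant equal to this minimum. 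More simply, the inequalities $H(x)\le\omega H(x_1)+(1-\omega)H(x_2)\le\omega\varphi(x_1)+(1-\omega)\varphi(x_2)=H(x)$ force equality throughout. Consequently $x_1\le a$ and $x_2\ge b$, and $H$ coincides with its chord between $x_1$ and $x_2$ at the interior point $x$. A convex function that meets its chord at an interior point is affine on the whole segment $[x_1,x_2]$, hence affine on a neighborhood of $x$. Being locally affine on the connected set $(a,b)$, $H$ is affine there.

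The main obstacle is Step 1, the convexity of the two-point formula. The cleanest route is to identify $\tilde H$ with the lower boundary of the convex hull of the epigraph and then apply Carath\'eodory's theorem together with the observation that lower boundary points lie on edges of the triangle.
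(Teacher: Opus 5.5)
Your proof is correct. Your Step~3 is essentially the paper's argument, and so is the endpoint computation: the chain $H(x)\le\omega H(x_1)+(1-\omega)H(x_2)\le\omega\varphi(x_1)+(1-\omega)\varphi(x_2)=H(x)$, then $x_1\le a$ and $x_2\ge b$, then the equality case of convexity. The genuine difference is how you obtain the two-point formula and continuity. The paper cites Rockafellar: Corollary~17.1.5 for \eqref{eq:twopoint}, and Corollary~17.2.1 with Theorem~10.2 on locally simplicial sets for continuity. You prove both directly. You get the formula from Carath\'eodory in $\mathbb R^2$ applied to $\mathrm{conv}(\mathrm{epi}\,\varphi)$, together with the fact that the lowest point of a triangle on a vertical line lies on an edge. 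You get continuity from Berge's theorem applied to the formula itself, which does not even use convexity. Your route is self-contained and elementary; the paper's is shorter and delegates these technical points to a standard reference.

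Two small points of rigor. First, to say that the lower boundary point $(x,h(x))$ lies on an edge, you need $(x,h(x))\in C$. This holds because $C=\mathrm{conv}(\mathrm{graph}\,\varphi)+\{0\}\times\mathbb R_{\ge0}$ is a compact set plus a closed set, hence closed. Alternatively, run the triangle argument for every $(x,z)\in C$ to get $\tilde H(x)\le z$. Second, your ``alternative'' continuity argument does not work as stated. The constant lower bound $\min\varphi$ and the chord upper bounds give only upper semicontinuity at $0$ and $1$, not $\liminf_{x\to0^+}H(x)\ge\varphi(0)$. For that you need the Berge argument (closed graph and compactness of the feasible correspondence already give lower semicontinuity), or, e.g., steep affine minorants $\varphi(0)-\epsilon-Mx\le\varphi$.
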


\begin{proof}
Extend $\varphi$ by $+\infty$ outside $[0,1]$, so that $H$ is the convex hull of $\varphi$ in the sense of~\cite{Roc70}. By Corollary~17.1.5 in~\cite{Roc70}, applied with $n=1$, $H(x)$ is the infimum of $\omega\varphi(x_1)+(1-\omega)\varphi(x_2)$ over all $\omega\in[0,1]$ and $x_1,x_2\in[0,1]$ with $\omega x_1+(1-\omega)x_2=x$, and the infimum is attained since $\varphi$ is continuous on the compact interval $[0,1]$; this is \eqref{eq:twopoint}. Since $\varphi$ is real-valued and continuous on the compact set $[0,1]$, Corollary~17.2.1 in~\cite{Roc70} shows that $H$ is a closed proper convex function, and $[0,1]$ is a segment, hence a locally simplicial subset of the effective domain of $H$; by Theorem~10.2 in~\cite{Roc70}, $H$ is continuous relative to $[0,1]$. In \eqref{eq:twopoint} with $x=0$, every point carrying a positive weight equals $0$, since $0$ is an extreme point of $[0,1]$; hence $H(0)=\varphi(0)$, and similarly $H(1)=\varphi(1)$.

Now let $(\alpha,\beta)$ be a maximal open interval with $H<\varphi$, and $x_0\in(\alpha,\beta)$. Let $(\omega,x_1,x_2)$ attain the minimum in \eqref{eq:twopoint} at $x=x_0$, labelled so that $x_1\le x_2$; then $\omega\in(0,1)$ and $x_1<x_0<x_2$, since otherwise $H(x_0)=\varphi(x_0)$. Let $\ell$ be the affine function with $\ell(x_i)=\varphi(x_i)$, $i=1,2$; then $H(x_0)=\ell(x_0)$. By \eqref{eq:twopoint}, for every $x\in[x_1,x_2]$,
$H(x)\le\ell(x)$ (take the combination of $x_1,x_2$ realizing $x$), while
$\ell(x_0)=H(x_0)\le\omega H(x_1)+(1-\omega)H(x_2)\le\omega\varphi(x_1)+(1-\omega)\varphi(x_2)=\ell(x_0)$
forces $H(x_1)=\varphi(x_1)$, $H(x_2)=\varphi(x_2)$, and, by the equality case of convexity at the interior point $x_0$, $H=\ell$ on $[x_1,x_2]$. In particular $x_1\le\alpha$ and $x_2\ge\beta$, as $x_1,x_2$ are contact points, so $H$ is affine on $[\alpha,\beta]$; and $H=\varphi$ at $\alpha$ and $\beta$ by the maximality of $(\alpha,\beta)$ and the continuity of $H$ and $\varphi$.
\end{proof}

By Lemma~\ref{lem:carrier}, for continuous $\varphi$ the contact set is closed, being the zero set of the continuous function $\varphi-\env\varphi$, and its complement in $[0,1]$ is an at most countable union of open intervals on each of which $\env\varphi$ is affine; we call the closures of these intervals \emph{bridges}. If moreover $\varphi$ is pw-quadratic, we call \emph{contact piece} any nonempty intersection of a maximal interval of the contact set of $\varphi$ with an arc of $\varphi$; a contact piece is \emph{degenerate} if it is a single point.

\begin{lemma}
\label{lem:bridge}
Let $\lambda<\xi<\rho$, let $g:[\lambda,\rho]\to\mathbb R$ be continuous, and set $H_1:=\env\big(g|_{[\lambda,\xi]}\big)$, $H_2:=\env\big(g|_{[\xi,\rho]}\big)$ and $H:=\env(g)$. Then there exist $\eta\in[\lambda,\xi]$ and $\zeta\in[\xi,\rho]$ such that $H=H_1$ on $[\lambda,\eta]$, $H$ is affine on $[\eta,\zeta]$, and $H=H_2$ on $[\zeta,\rho]$. Moreover, let $w\in(\lambda,\xi]$ be such that $H_1'(w^-)$ is finite, and let $\ell_0$ be the line through $(w,H_1(w))$ with slope $H_1'(w^-)$. Then $H=H_1$ on $[\lambda,w]$ if and only if $\ell_0\le g$ on $[\xi,\rho]$.
\end{lemma}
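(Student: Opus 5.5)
We prove both parts using Lemma~\ref{lem:carrier} (applied on compact intervals via the affine rescaling) together with the elementary fact that a convex function minorizing $g$ on a subinterval minorizes the envelope of $g$ restricted to it.

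\emph{First part.} First, $H\le H_1$ on $[\lambda,\xi]$ and $H\le H_2$ on $[\xi,\rho]$. Indeed, $H$ restricted to $[\lambda,\xi]$ is convex and $\le g$, hence $\le H_1$; the same argument gives $H\le H_2$.

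Set $\eta:=\max\{x\in[\lambda,\xi]: H(x)=H_1(x)\}$. This set is closed by continuity and contains $\lambda$, since $H(\lambda)=g(\lambda)=H_1(\lambda)$ by Lemma~\ref{lem:carrier}. Symmetrically, set $\zeta:=\min\{x\in[\xi,\rho]:H(x)=H_2(x)\}$.

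On $(\eta,\zeta)$ we claim $H<g$. For $x\in(\eta,\xi]$, we have $H(x)<H_1(x)\le g(x)$. For $x\in[\xi,\zeta)$, we have $H(x)<H_2(x)\le g(x)$, where the strictness at $\xi$ comes from whichever endpoint conditions apply. If $\eta=\xi$ or $\zeta=\xi$, the claim is trivial on the degenerate side. Hence $(\eta,\zeta)$ lies in a maximal open interval where $H<g$, and by Lemma~\ref{lem:carrier} $H$ is affine on $[\eta,\zeta]$.

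It remains to show $H=H_1$ on all of $[\lambda,\eta]$, not just at $\eta$. Since $H\le H_1$, suppose $H(x_0)<H_1(x_0)$ for some $x_0<\eta$. Take the maximal open interval $(a,b)\ni x_0$ on which $H<g$. By Lemma~\ref{lem:carrier}, $H$ is affine on $[a,b]$ and touches $g$ at $a$ and $b$.
\begin{itemize}
\item If $b\le\xi$, then $H$ restricted to $[a,b]$ is a chord of $g$ between points of $[\lambda,\xi]$, so $H_1\le H$ there by convexity of $H_1$ and $H_1\le g$ at the endpoints. Thus $H=H_1$ at $x_0$, a contradiction.
\item If $b>\xi$, then $b>\eta$, and $H$ is affine on $[a,b]\supseteq[x_0,\eta]$. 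At $\eta$ the convex function $H_1$ equals $H$, at $x_0$ it exceeds it, and $H_1\ge H$ throughout. Since $a$ is a contact point of $g$ with $a\le\xi$, we get $H_1(a)\le g(a)=H(a)$. So $H_1$ equals the affine $H$ at $a$ and at $\eta$, and convexity forces $H_1\le H$ on $[a,\eta]$, again a contradiction.
\end{itemize}
The symmetric argument gives $H=H_2$ on $[\zeta,\rho]$.

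\emph{Second part.}

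($\Rightarrow$) Suppose $H=H_1$ on $[\lambda,w]$. Then $H'(w^-)=H_1'(w^-)$, so $\ell_0$ is a supporting line of the convex function $H$ at $w$ from the left. Convexity gives $H\ge\ell_0$ on $[w,\rho]$, and hence $g\ge H\ge\ell_0$ on $[\xi,\rho]$.

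($\Leftarrow$) Suppose $\ell_0\le g$ on $[\xi,\rho]$. Define $\tilde H:=H_1$ on $[\lambda,w]$ and $\tilde H:=\max(H_1,\ell_0)$ on $[w,\xi]$. We want a convex minorant of $g$ on $[\lambda,\rho]$ agreeing with $H_1$ on $[\lambda,w]$. A cleaner choice is
\[
\tilde H:=\max(H_1,\ell_0)\ \text{on }[\lambda,\xi],\qquad \tilde H:=\ell_0\ \text{on }[\xi,\rho].
\]
On $[\lambda,w]$ we have $\ell_0\le H_1$, because $\ell_0$ supports $H_1$ at $w$ with the left slope. Hence $\tilde H=H_1$ there.

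This $\tilde H$ is convex. On $[w,\xi]$ we have $\ell_0\le H_1$ as well, since $H_1'(w^+)\ge H_1'(w^-)$, so in fact $\tilde H=H_1$ on $[\lambda,\xi]$. Then $\tilde H$ is $H_1$ on $[\lambda,\xi]$ followed by $\ell_0$ on $[\xi,\rho]$. Convexity at $\xi$ needs the left slope to be at most the right slope, which is guaranteed if we instead take $\tilde H:=\max(H_1\text{ extended},\ell_0)$. Here $H_1$ is extended by $+\infty$ to the right of $\xi$, with $\tilde H:=\ell_0$ on $(\xi,\rho]$. The clean form is $\tilde H:=\ell_0$ on $[w,\rho]$ and $H_1$ on $[\lambda,w]$. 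This is convex because the slopes match at $w$, and it is $\le g$:
\begin{itemize}
\item on $[\lambda,w]$, since $H_1\le g$;
\item on $[w,\xi]$, since $\ell_0\le H_1\le g$;
\item on $[\xi,\rho]$, by hypothesis.
\end{itemize}
Thus $\tilde H\le H$, so $H_1=\tilde H\le H\le H_1$ on $[\lambda,w]$, which gives $H=H_1$ there.

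The main obstacle is the bookkeeping in the first part, namely excluding a bridge of $H$ that starts left of $\eta$ while $H$ still touches $H_1$ at $\eta$. This is handled by the convexity argument in the second bullet above.
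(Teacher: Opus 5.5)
Your proof is correct, but it is organized differently from the paper's. For the first part, the paper glues $\tilde H:=H_1$ on $[\lambda,\xi]$ and $H_2$ on $[\xi,\rho]$, proves $\env\tilde H=H$, and then applies Lemma~\ref{lem:carrier} to $\tilde H$ rather than to $g$. A component of $\{H<\tilde H\}$ lying inside $[\lambda,\xi]$ or inside $[\xi,\rho]$ is excluded at once by convexity of $H_1$ or $H_2$. So there is at most one component, it straddles $\xi$, and its closure is $[\eta,\zeta]$.

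You instead define $\eta,\zeta$ as the last and first points where $H$ agrees with $H_1$ and $H_2$, and you apply Lemma~\ref{lem:carrier} to $g$ itself. This avoids proving $\env\tilde H=H$. The price is your extra case analysis ruling out $H<H_1$ to the left of $\eta$, notably the case of a bridge of $g$ that straddles $\eta$ (your second bullet). The paper's choice of $\tilde H$ makes that case analysis unnecessary.

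In the second part, your ($\Rightarrow$) is the paper's supporting-line argument. Your ($\Leftarrow$) differs. The paper shows $\ell_0\le\tilde H$, hence $\ell_0\le H$ and $H(w)=H_1(w)$. It then uses the affine piece on $[\eta,\zeta]$ from the first part to propagate equality to all of $[\lambda,w]$. Your explicit convex minorant of $g$ ($H_1$ on $[\lambda,w]$, $\ell_0$ on $[w,\rho]$) gives $H\ge H_1$ on $[\lambda,w]$ directly. It is independent of the first part and arguably cleaner.

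You should clean up that paragraph, though. The intermediate function equal to $\max(H_1,\ell_0)=H_1$ on $[\lambda,\xi]$ and to $\ell_0$ on $[\xi,\rho]$ is in general discontinuous at $\xi$, since $H_1(\xi)\ge\ell_0(\xi)$ with possibly strict inequality. So your sentence calling it convex is false. The remark about taking a maximum with $H_1$ extended by $+\infty$ is also inconsistent, since that maximum would be $+\infty$ on $(\xi,\rho]$, not $\ell_0$. Only the final construction should remain.
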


\begin{proof}
By Lemma~\ref{lem:carrier} we have $H_1(\xi)=g(\xi)=H_2(\xi)$, so the function $\tilde H$ equal to $H_1$ on $[\lambda,\xi]$ and to $H_2$ on $[\xi,\rho]$ is well defined and continuous. We claim $\env\tilde H=H$. Indeed $\tilde H\le g$, so $\env\tilde H\le H$; conversely $H$ is convex with $H\le g$, so its restriction to $[\lambda,\xi]$ is a convex function $\le g|_{[\lambda,\xi]}$, and therefore $H\le H_1$ there, and similarly $H\le H_2$ on $[\xi,\rho]$, that is $H\le\tilde H$, so $H\le\env\tilde H$.

Apply now Lemma~\ref{lem:carrier} to $\tilde H$: on every maximal open interval on which $H<\tilde H$, the function $H$ is affine and agrees with $\tilde H$ at the endpoints. Suppose such a component $(\alpha_1,\beta_1)$ were contained in $[\lambda,\xi]$. Then $H$ is the chord of $H_1$ over $[\alpha_1,\beta_1]$, and since $H_1$ is convex that chord is at least $H_1=\tilde H$ on $[\alpha_1,\beta_1]$, contradicting $H<\tilde H$ there. The same argument excludes components contained in $[\xi,\rho]$. Hence every such component contains $\xi$, so there is at most one; taking $[\eta,\zeta]$ to be its closure, or $\eta=\zeta=\xi$ if there is none, proves the first part.

For the second part, suppose first that $\ell_0\le g$ on $[\xi,\rho]$. By convexity, $\ell_0\le H_1$ on $[\lambda,\xi]$, and $\ell_0\le H_2$ on $[\xi,\rho]$ since $\ell_0$ is affine and $\ell_0\le g$ there; so $\ell_0\le\tilde H$ and hence $\ell_0\le H$; thus $H(w)=H_1(w)$. If $w>\eta$, then $H$ is affine on $[\eta,w]$ and agrees with the convex function $H_1\ge H$ at $\eta$ and $w$, so $H=H_1$ on $[\eta,w]$; in either case $H=H_1$ on $[\lambda,w]$. Conversely, suppose that $H=H_1$ on $[\lambda,w]$, and let $\ell$ be a supporting line of $H$ at $w$, which exists as $w<\rho$. Its slope is at least $H'(w^-)=H_1'(w^-)$, and $\ell(w)=\ell_0(w)$, so $\ell_0\le\ell\le H\le g$ on $[w,\rho]\supseteq[\xi,\rho]$.
\end{proof}

\begin{lemma}
\label{lem:tangent}
Let $\varphi\in\mathcal Q$, let $H:=\env\varphi$, and let $x_0\in(0,1)$ be a contact point of $\varphi$. Then
\begin{equation}
\label{eq:contactslopes}
\varphi'(x_0^-)\ \le\ m\ \le\ \varphi'(x_0^+)
\qquad\text{for every } m\in\partial H(x_0).
\end{equation}
In particular, $\varphi$ and $H$ are differentiable at $x_0$, with $H'(x_0)=\varphi'(x_0)$.
\end{lemma}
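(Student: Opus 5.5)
The plan is to compare $H$ and $\varphi$ near $x_0$, using $H\le\varphi$ on $[0,1]$ and $H(x_0)=\varphi(x_0)$. Since $\varphi$ is pw-quadratic, it has one-sided derivatives $\varphi'(x_0^\pm)$ at the interior point $x_0$. Since $H$ is convex and finite on $[0,1]$, its subdifferential at the interior point $x_0$ is $\partial H(x_0)=[H'(x_0^-),H'(x_0^+)]$, which is nonempty and bounded. So it suffices to prove
\[
\varphi'(x_0^-)\le H'(x_0^-)\qquad\text{and}\qquad H'(x_0^+)\le\varphi'(x_0^+).
\]

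For the right inequality, take $h>0$ small. From $H(x_0+h)\le\varphi(x_0+h)$ and $H(x_0)=\varphi(x_0)$ we get
\[
\frac{H(x_0+h)-H(x_0)}{h}\le\frac{\varphi(x_0+h)-\varphi(x_0)}{h}.
\]
Letting $h\downarrow0$ gives $H'(x_0^+)\le\varphi'(x_0^+)$. For the left inequality, take $h>0$ and divide the analogous inequality by $-h<0$, which reverses it:
\[
\frac{H(x_0)-H(x_0-h)}{h}\ge\frac{\varphi(x_0)-\varphi(x_0-h)}{h}.
\]
Letting $h\downarrow0$ gives $H'(x_0^-)\ge\varphi'(x_0^-)$. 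Together these establish \eqref{eq:contactslopes} for every $m\in[H'(x_0^-),H'(x_0^+)]=\partial H(x_0)$.

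For differentiability, the chain just obtained reads
\[
\varphi'(x_0^-)\le H'(x_0^-)\le H'(x_0^+)\le\varphi'(x_0^+).
\]
The outer inequality $\varphi'(x_0^-)\le\varphi'(x_0^+)$ says $\varphi$ has no concave kink at $x_0$. Since $\varphi\in\mathcal Q$, every kink of $\varphi$ is concave, so $x_0$ is not a kink, and $\varphi'(x_0^-)=\varphi'(x_0^+)$. Note that $x_0$ may still be a breakpoint at which only the quadratic coefficient changes; all that matters is that the two one-sided derivatives coincide. Every inequality in the chain then collapses to equality, so $\partial H(x_0)$ is a singleton, $H$ is differentiable at $x_0$, and $H'(x_0)=\varphi'(x_0)$. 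The only delicate point is getting the direction right when dividing by the negative increment on the left side; no step is a serious obstacle.
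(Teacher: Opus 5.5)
Your proof is correct and is essentially the paper's argument. The paper takes an arbitrary $m\in\partial H(x_0)$, uses the supporting line $\ell\le H\le\varphi$ to bound the one-sided difference quotients of $\varphi$, and closes with the same concave-kink step; you compare difference quotients of $H$ and $\varphi$ directly and invoke $\partial H(x_0)=[H'(x_0^-),H'(x_0^+)]$, which also lets you read off differentiability of $H$ from equal one-sided derivatives instead of citing Rockafellar's Theorem~25.1.
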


\begin{proof}
The function $H$ is convex and finite on $[0,1]$ and $x_0\in(0,1)$, so $\partial H(x_0)\ne\emptyset$ by Theorem~23.4 in~\cite{Roc70}. Let $m\in\partial H(x_0)$ and let $\ell(x):=H(x_0)+m(x-x_0)$ be the corresponding supporting line, so that $\ell\le H\le\varphi$, and $\ell(x_0)=H(x_0)=\varphi(x_0)$ because $x_0$ is a contact point. Hence $\varphi(x)-\varphi(x_0)\ge m(x-x_0)$ for every $x\in[0,1]$; dividing by $x-x_0$ and letting $x\to x_0$ on either side yields \eqref{eq:contactslopes}, the one-sided derivatives existing because $\varphi$ is pw-quadratic.

Since $\partial H(x_0)\ne\emptyset$, \eqref{eq:contactslopes} forces $\varphi'(x_0^-)\le\varphi'(x_0^+)$. A kink of a function of $\mathcal Q$ is concave, that is, it satisfies $\varphi'(x_0^-)>\varphi'(x_0^+)$; so $x_0$ is not a kink, and $\varphi$ is differentiable at $x_0$. The two bounds in \eqref{eq:contactslopes} then coincide, so $\partial H(x_0)$ is the singleton $\{\varphi'(x_0)\}$; by Theorem~25.1 in~\cite{Roc70}, $H$ is differentiable at $x_0$ with $H'(x_0)=\varphi'(x_0)$.
\end{proof}

\begin{lemma}
\label{lem:noconcave}
Let $\varphi\in\mathcal Q$. Then no contact point of $\varphi$ lies in the interior of a strictly concave arc.
\end{lemma}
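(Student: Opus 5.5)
We argue by contradiction, combining the supporting-line argument from the proof of Lemma~\ref{lem:tangent} with a second-order expansion. Let $\varphi\in\mathcal Q$ and $H:=\env\varphi$. Suppose $x_0$ is a contact point lying in the interior of a strictly concave arc $[a,b]$ of $\varphi$. On that arc $\varphi(x)=\alpha x^2+\beta x+\gamma$ with $\alpha<0$. Since $x_0\in(a,b)\subseteq[0,1]$, we have $x_0\in(0,1)$, so Lemma~\ref{lem:tangent} applies (the kink hypothesis is not even needed, since $\varphi$ is a polynomial near $x_0$). It gives that $H$ is differentiable at $x_0$ with $H'(x_0)=\varphi'(x_0)=:m$, and that $m\in\partial H(x_0)$.

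Next, take the supporting line $\ell(x):=\varphi(x_0)+m(x-x_0)$. Exactly as in the proof of Lemma~\ref{lem:tangent}, we have $\ell\le H\le\varphi$ on $[0,1]$, using $H(x_0)=\varphi(x_0)$. On the other hand, for $x\in(a,b)$ Taylor's formula for the quadratic gives
\[
\varphi(x)=\varphi(x_0)+m(x-x_0)+\alpha(x-x_0)^2=\ell(x)+\alpha(x-x_0)^2 .
\]
Hence $\varphi(x)<\ell(x)$ for every $x\in(a,b)\setminus\{x_0\}$, which is a nonempty set. This contradicts $\ell\le\varphi$, so no such contact point exists.

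There is no real obstacle. The only care needed is to check that an interior point of an arc is in $(0,1)$, so that $\partial H(x_0)\ne\emptyset$ (Theorem~23.4 in~\cite{Roc70}). Alternatively, one can avoid the subdifferential altogether: by \eqref{eq:twopoint}, the midpoint combination of $x_0\pm\delta$ for small $\delta>0$ gives
\[
H(x_0)\le\tfrac12\varphi(x_0-\delta)+\tfrac12\varphi(x_0+\delta)=\varphi(x_0)+\alpha\delta^2<\varphi(x_0),
\]
so $x_0$ is not a contact point. I would in fact present this shorter argument, since it uses only Lemma~\ref{lem:carrier}.
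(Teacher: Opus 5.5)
Your proposal is correct, and your first argument is essentially the paper's proof. The paper also takes $m=\varphi'(x_0)\in\partial H(x_0)$ from Lemma~\ref{lem:tangent} and uses the supporting line $H(x)\ge\varphi(x_0)+m(x-x_0)$. It then observes that on a neighborhood of $x_0$ inside the arc one has, in your notation, $\varphi(x)-H(x)\le\alpha(x-x_0)^2<0$, which contradicts $H\le\varphi$.

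The midpoint argument you say you would actually present is a genuinely different and more elementary route. It never touches subgradients or differentiability. It does not even need \eqref{eq:twopoint}: convexity of $H$ together with $H\le\varphi$ already gives $H(x_0)\le\tfrac12H(x_0-\delta)+\tfrac12H(x_0+\delta)\le\tfrac12\varphi(x_0-\delta)+\tfrac12\varphi(x_0+\delta)=\varphi(x_0)+\alpha\delta^2$, for any $\delta>0$ small enough that $x_0\pm\delta$ stay in the arc. As a result it holds for every continuous $\varphi$ on $[0,1]$, and it makes visible that the concave-kink hypothesis defining $\mathcal Q$ plays no role in this lemma. It is the same ``the envelope lies below every chord'' reasoning the paper uses in the proof of Lemma~\ref{lem:chord}.

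The paper's version is instead a two-line corollary of Lemma~\ref{lem:tangent}. That lemma is needed anyway for the differentiability of $H$ in Lemma~\ref{lem:conjugate}, so in context the paper's route costs nothing extra. For the statement itself, yours is the more self-contained of the two.
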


\begin{proof}
Write $H:=\env\varphi$ and suppose, for a contradiction, that some contact point $x_0$ lies in the interior of an arc on which $\varphi(x)=ax^2+bx+d$ with $a<0$; let $N\subseteq(0,1)$ be a neighborhood of $x_0$ contained in that arc. By Lemma~\ref{lem:tangent}, $m:=\varphi'(x_0)$ is a subgradient of $H$ at $x_0$, so $H(x)\ge\varphi(x_0)+m(x-x_0)$ for all $x\in[0,1]$, as $H(x_0)=\varphi(x_0)$. For $x\in N\setminus\{x_0\}$, since $\varphi$ is quadratic on $N$,
$\varphi(x)-H(x)\le\varphi(x)-\varphi(x_0)-m(x-x_0)=a(x-x_0)^2<0$, contradicting $H\le\varphi$.
\end{proof}

\begin{lemma}
\label{lem:onecontact}
Let $\varphi\in\mathcal Q$, $H:=\env\varphi$, and let $A$ be an arc of $\varphi$ with nonnegative quadratic coefficient. If $x_1<x_2$ are contact points of $\varphi$ in $A$, then $H=\varphi$ on $[x_1,x_2]$. Consequently, the contact set of $\varphi$ meets every arc with nonnegative quadratic coefficient in a single (possibly empty or degenerate) closed interval.
\end{lemma}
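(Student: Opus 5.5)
We argue by contradiction, using the structure of bridges from Lemma~\ref{lem:carrier}. Suppose $x_1<x_2$ are contact points of $\varphi$ in the arc $A$, on which $\varphi(x)=ax^2+bx+d$ with $a\ge0$, and suppose that some $y\in(x_1,x_2)$ satisfies $H(y)<\varphi(y)$. Let $(\alpha,\beta)$ be the maximal open interval containing $y$ on which $H<\varphi$. Since $H=\varphi$ at $x_1$ and at $x_2$, maximality gives $x_1\le\alpha<\beta\le x_2$, so $[\alpha,\beta]\subseteq A$. By Lemma~\ref{lem:carrier}, $H$ is affine on $[\alpha,\beta]$ and agrees with $\varphi$ at $\alpha$ and at $\beta$. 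Thus $H$ on $[\alpha,\beta]$ is the chord of the quadratic $ax^2+bx+d$ between $\alpha$ and $\beta$.

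Because $a\ge0$, this quadratic is convex on $A$, so its chord over $[\alpha,\beta]$ is at least the quadratic on that interval. This gives $H(y)\ge\varphi(y)$, contradicting the choice of $y$. Hence $H=\varphi$ on all of $[x_1,x_2]$.

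The consequence follows directly. The contact set is closed, and by the first part its intersection with the interval $A$ is convex: any two points of the intersection have the whole segment between them in the contact set, and that segment lies in $A$ because $A$ is an interval. A closed convex subset of the closed interval $A$ is a closed interval, which may be empty or a single point.

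No step here is a serious obstacle. The only point needing care is to check that the bridge $[\alpha,\beta]$ lies inside $A$, so that $\varphi$ is a single convex quadratic there. This is exactly what the endpoints $x_1$ and $x_2$, being contact points in $A$, guarantee.
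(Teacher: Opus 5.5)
Your proposal is correct and follows essentially the same route as the paper's proof. Both take a maximal open interval $(\alpha,\beta)$ on which $H<\varphi$, trap it inside $[x_1,x_2]\subseteq A$, use Lemma~\ref{lem:carrier} to identify $H$ there with the chord of the convex quadratic, which yields the contradiction, and then derive the interval conclusion from closedness plus convexity of the intersection.
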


\begin{proof}
Suppose, for a contradiction, that $H(x_0)<\varphi(x_0)$ for some $x_0\in(x_1,x_2)$, and let $(\alpha,\beta)$ be the maximal open interval containing $x_0$ on which $H<\varphi$. Since $x_1$ and $x_2$ are contact points, neither belongs to $(\alpha,\beta)$, so that
\[
x_1\le\alpha<x_0<\beta\le x_2 ,
\]
and in particular $[\alpha,\beta]\subseteq[x_1,x_2]\subseteq A$. By Lemma~\ref{lem:carrier}, $H$ is affine on $[\alpha,\beta]$ and $H=\varphi$ at $\alpha$ and at $\beta$; that is, $H$ coincides on $[\alpha,\beta]$ with the chord of $\varphi$ over that interval. But $\varphi$ is convex on $A$, so its chord over $[\alpha,\beta]$ is at least $\varphi$ there. Hence $H\ge\varphi$ on $(\alpha,\beta)$, contradicting $H<\varphi$ there. The final sentence follows, since the contact set is closed and, by the first assertion, its intersection with an arc of nonnegative quadratic coefficient is convex.
\end{proof}

\begin{lemma}
\label{lem:conjugate}
Let $\varphi\in\mathcal Q$ have $K$ arcs, let $H:=\env\varphi$, let $s\in\mathbb R\setminus\{0\}$, and define
\[
\mu(t):=\min_{x\in[0,1]}\big[\varphi(x)+stx\big],\qquad t\in\mathbb R.
\]
Then $\mu$ is a concave pw-quadratic function on $\mathbb R$ with at most $K+2$ arcs. Specifically, $\mu(t)=\varphi(0)$ for $-st\le H'(0^+)$, $\mu(t)=\varphi(1)+st$ for $-st\ge H'(1^-)$, and, for each nondegenerate contact piece $[w_1,w_2]$ inside a strictly convex arc on which $\varphi(x)=ax^2+bx+d$,
\begin{equation}
\label{eq:conjformula}
\mu(t) = -\frac{(st+b)^2}{4a} + d \qquad\text{for all $t$ with } -st\in[\,2aw_1+b,\ 2aw_2+b\,].
\end{equation}
These ranges of $t$ cover $\mathbb R$ except for finitely many points, at most one for each maximal nondegenerate interval on which $H$ is affine.
\end{lemma}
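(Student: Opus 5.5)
The plan is to reduce everything to the conjugate of $H$. Set $\sigma:=-st$. Then
\[
\mu(t)=\min_{x\in[0,1]}[\varphi(x)-\sigma x]=-\varphi^*(\sigma).
\]
The first step is to show $\varphi^*=H^*$. Indeed $H\le\varphi$ gives $\varphi^*\le H^*$. Conversely, the affine minorant $x\mapsto \sigma x-\varphi^*(\sigma)$ of $\varphi$ is convex, hence lies below $H$, which gives $H^*\le\varphi^*$. Consequently $\mu(t)=-H^*(-st)$. This is concave in $t$ (it is also a minimum of affine functions of $t$), and it suffices to describe $H^*$ as a function of $\sigma$.

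For a convex $H$ on $[0,1]$, the maximum of $\sigma x-H(x)$ is attained exactly at those $x$ with $\sigma\in\partial H(x)$, where the subdifferential is taken for $H$ extended by $+\infty$, so that $\partial H(0)=(-\infty,H'(0^+)]$ and $\partial H(1)=[H'(1^-),\infty)$. This yields three regimes.

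\emph{Endpoints.} If $\sigma\le H'(0^+)$, the maximizer is $x=0$. Since $H(0)=\varphi(0)$ by Lemma~\ref{lem:carrier}, we get $\mu=\varphi(0)$. The case $\sigma\ge H'(1^-)$ is symmetric and gives $\varphi(1)+st$.

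\emph{Interior slopes.} For $\sigma\in(H'(0^+),H'(1^-))$, the subdifferential map is monotone. Either $\sigma$ is the slope of a maximal nondegenerate interval on which $H$ is affine, which is one exceptional $\sigma$ (hence one $t$, since $s\neq0$) per such interval, or the maximizer set contains a contact point $x_0\in(0,1)$ that is not interior to an affine piece of $H$. I will argue that such an $x_0$ lies in a nondegenerate contact piece inside a strictly convex arc.

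By Lemma~\ref{lem:tangent}, $H'(x_0)=\varphi'(x_0)=\sigma$. By Lemma~\ref{lem:noconcave}, $x_0$ is not interior to a strictly concave arc. If $x_0$ lies in an affine arc or in a degenerate contact piece, I expect $H$ to be affine around $x_0$, or $\sigma$ to be attained only at isolated values, so that only finitely many $\sigma$ are missed. On a contact piece $[w_1,w_2]$ of a strictly convex arc with $\varphi=ax^2+bx+d$, we have $H=\varphi$, and the stationarity condition $2ax+b=\sigma$ gives $x=(\sigma-b)/(2a)$. Substituting yields $-H^*(\sigma)=d-(\sigma-b)^2/(4a)$, which is \eqref{eq:conjformula} after $\sigma=-st$, valid precisely for $\sigma\in[2aw_1+b,2aw_2+b]$.

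\emph{Main obstacle.} The hard part is the covering claim. I must show that the slope ranges $H'$ sweeps out over the strictly convex contact pieces, together with the two endpoint rays, fill all of $\mathbb R$ except the slopes of maximal affine intervals of $H$. For this I will use that $H'$ is continuous at contact points (Lemma~\ref{lem:tangent}) and nondecreasing. Between consecutive nondegenerate strictly-convex contact pieces, $H$ is therefore affine: across bridges by Lemma~\ref{lem:carrier}, and across affine arcs and degenerate pieces by the same reasoning. So each gap in slopes collapses to a single value.

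It remains to count arcs. By Lemma~\ref{lem:onecontact}, each arc contributes at most one contact piece, so there are at most $K$ quadratic arcs of $\mu$, plus the two endpoint arcs, for at most $K+2$ arcs in total. The exceptional points are breakpoints, and $\mu$ is continuous there because it is concave and finite (Lemma~\ref{lem:concavejumps}). Hence $\mu$ is pw-quadratic.
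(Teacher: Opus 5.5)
Your proposal is correct and follows essentially the same route as the paper. Both arguments write $\mu(t)=-H^*(-st)$, characterize the maximizers through $\partial H$, and read off the two endpoint rays together with one quadratic arc per nondegenerate strictly convex contact piece. Both also use continuity and monotonicity of $H'$ (via Lemma~\ref{lem:tangent} at isolated contact points) and Lemma~\ref{lem:onecontact} for the $K+2$ count.

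The differences are cosmetic. You derive $\varphi^*=H^*$ directly from the definition of the envelope as the greatest convex minorant, rather than citing Rockafellar. Your hedged ``I expect'' sentence is made rigorous by your later gap argument, which is exactly the paper's observation that an isolated contact point sits between two bridges of equal slope.
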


\begin{proof}
We first observe that
\[
\mu(t)=-\max_{x\in[0,1]}\big[(-st)x-\varphi(x)\big]=-\varphi^*(-st)=-H^*(-st).
\]
The last equality holds because an affine function lies below $\varphi$ if and only if it lies below $H$: extending $\varphi$ by $+\infty$ outside $[0,1]$, its closed convex hull is $H$, and Corollary~12.1.1 in~\cite{Roc70} applies. Hence $\varphi^*=H^*$, and it suffices to describe $H^*$.

By Lemma~\ref{lem:carrier} and Lemmas~\ref{lem:tangent}--\ref{lem:onecontact}, $H$ is convex and pw-quadratic: it coincides with $\varphi$ on the contact set (which meets each arc with nonnegative quadratic coefficient in at most one interval and avoids the interiors of strictly concave arcs and all kinks) and is affine on the bridges in between. By Lemma~\ref{lem:tangent}, every contact point in $(0,1)$ is a point at which $\varphi$ is differentiable and $H$ is differentiable with $H'=\varphi'$; on bridges $H$ is affine; hence $H$ is differentiable on $(0,1)$ and $H'$ is continuous and nondecreasing, strictly increasing on contact pieces inside strictly convex arcs (where $H'(x)=2ax+b$) and constant on intervals on which $H$ is affine.

Fix $\sigma\in\big(H'(0^+),H'(1^-)\big)$. Since $H'$ is continuous and nondecreasing on $(0,1)$, $\sigma=H'(x)$ for some $x\in(0,1)$, and $x$ lies in a nondegenerate contact piece inside a strictly convex arc or in a maximal nondegenerate interval on which $H$ is affine, as $H$ is affine on bridges and on contact pieces inside affine arcs, and an isolated contact point in $(0,1)$ lies between two bridges, which have the same slope by Lemma~\ref{lem:tangent}. The maximum of the concave function $x\mapsto\sigma x-H(x)$ is attained exactly on $\{x: \sigma\in\partial H(x)\}=\{x: H'(x)=\sigma\}$. If $\sigma$ lies in the interior of the slope range $[2aw_1+b,\,2aw_2+b]$ of a contact piece inside a strictly convex arc, the maximizer $x=(\sigma-b)/2a$ is unique, and since $H=\varphi$ there,
$H^*(\sigma)=\sigma x-(ax^2+bx+d)=\frac{(\sigma-b)^2}{4a}-d$:
one quadratic arc of $H^*$ per such contact piece. If $\sigma$ is the slope of a maximal nondegenerate interval on which $H$ is affine, the maximizer set is that interval, which equals $\partial H^*(\sigma)$ by Corollary~23.5.1 in~\cite{Roc70}; thus $\sigma$ contributes no arc of $H^*$, and is a kink of $H^*$. Distinct such intervals and distinct contact pieces occupy slope ranges with disjoint interiors, consecutively ordered, since $H'$ is nondecreasing; so the arcs of $H^*$ on $\big(H'(0^+),H'(1^-)\big)$ are exactly one per nondegenerate strictly convex contact piece. For $\sigma\le H'(0^+)$, $x=0$ is a maximizer and $H^*(\sigma)=-H(0)=-\varphi(0)$; for $\sigma\ge H'(1^-)$, $x=1$ is a maximizer and $H^*(\sigma)=\sigma-H(1)=\sigma-\varphi(1)$ (using Lemma~\ref{lem:carrier} for the endpoint values): the two unbounded affine arcs.

Thus $H^*$ is convex pw-quadratic, with at most two arcs more than the number of strictly convex arcs of $\varphi$, each of which contains at most one contact piece by Lemma~\ref{lem:onecontact}, hence with at most $K+2$ arcs. Substituting $\sigma=-st$ and negating yields the statement, including \eqref{eq:conjformula}; concavity of $\mu$ is immediate since $t\mapsto H^*(-st)$ is convex.
\end{proof}

\begin{lemma}
\label{lem:candidates}
Let $\varphi\in\mathcal Q$ have arcs $[\xi_{j-1},\xi_j]$, $j\in[K]$, where $0=\xi_0<\xi_1<\dots<\xi_K=1$, with $\varphi(x)=a_jx^2+b_jx+d_j$ on the $j$-th arc. Then $\varphi$ attains its minimum over $[0,1]$ at $0$, at $1$, or at $-b_j/(2a_j)$ for some $j$ with $a_j>0$ and $-b_j/(2a_j)\in[\xi_{j-1},\xi_j]$.
\end{lemma}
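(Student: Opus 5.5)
Since $\varphi$ is continuous on the compact interval $[0,1]$, a minimizer $x^*$ exists. If $x^*\in\{0,1\}$ there is nothing to prove, so we assume $x^*\in(0,1)$ and show that $x^*$ must be the vertex $-b_j/(2a_j)$ of a strictly convex arc containing it. The plan is to rule out, in turn, that $x^*$ is a breakpoint, that it lies on a strictly concave arc, and that it lies on an affine arc; in the last two cases we will also be prepared to replace $x^*$ by an endpoint of $[0,1]$ with the same value.

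First, $x^*$ is not a kink. At an interior minimizer we have $\varphi'(x^{*-})\le 0\le\varphi'(x^{*+})$, which contradicts the concavity $\varphi'(x^{*-})>\varphi'(x^{*+})$ required of kinks in $\mathcal Q$. Hence $\varphi$ is differentiable at $x^*$ with $\varphi'(x^*)=0$. Alternatively, $x^*$ is a contact point, because the constant $\varphi(x^*)$ is a convex minorant touching $\varphi$ there, so Lemma~\ref{lem:tangent} applies directly. Next, by Lemma~\ref{lem:noconcave} (or simply because $\varphi-\varphi(x^*)$ would be a negative multiple of $(x-x^*)^2$ nearby), $x^*$ cannot lie in the interior of a strictly concave arc.

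It remains to treat the case where $x^*$ lies in some arc $[\xi_{j-1},\xi_j]$ with $a_j\ge 0$. Suppose first that $a_j>0$ and $x^*$ is interior to the arc, or is a breakpoint at which $\varphi$ is differentiable with the quadratic of arc $j$ on one side. Then the one-sided derivative from that side gives $2a_jx^*+b_j=0$, so $x^*=-b_j/(2a_j)\in[\xi_{j-1},\xi_j]$, as claimed. A differentiable breakpoint adjacent to a strictly convex arc is covered by choosing that arc as $j$. The only delicate sub-case is when both arcs adjacent to the breakpoint are concave or affine. If $x^*$ lies in an affine arc, then $\varphi'(x^*)=0$ forces $\varphi$ to be constant on the whole arc. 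We then slide $x^*$ to an endpoint of that arc, which is again a minimizer.

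So the main obstacle is the affine and concave degenerate case, where minimizers come in intervals. The cleanest way I would handle it is a \emph{leftward-sliding} argument. Take $x^*$ to be the smallest minimizer, which exists because the set of minimizers is closed. If $x^*>0$ and the arc immediately to its left is affine or strictly concave, then $\varphi'(x^{*-})\le 0$ at a differentiable point. Concavity of that arc, or constancy when the slope is $0$, then yields a point further left with value at most $\varphi(x^*)$, contradicting minimality. In particular, the arc to the left of $x^*$ must be strictly convex and have $x^*$ as its vertex, and this gives the conclusion.
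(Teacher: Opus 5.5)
Your final leftward-sliding argument is correct and is the paper's proof in mirror image: the paper takes the largest minimizer $x^\star$, rules out a kink there because kinks in $\mathcal Q$ are concave, and then excludes $a_j<0$ and $a_j=0$ on the arc immediately to the right of $x^\star$, just as you do with the smallest minimizer and the arc to its left. Your earlier case analysis, including the sliding along affine arcs, is made unnecessary by this extremal choice. In the last step you should use $\varphi'(x^{*-})=\varphi'(x^*)=0$ rather than only $\le 0$, because on a strictly concave arc a negative left slope would not give a smaller value to the left.
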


\begin{proof}
Let $\mathcal M$ be the set of minimizers of $\varphi$ over $[0,1]$, which is nonempty and compact by continuity, and let $x^\star:=\max\mathcal M$. If $x^\star\in\{0,1\}$ we are done, so assume $x^\star\in(0,1)$. Then $x^\star$ is not a kink of $\varphi$: all kinks are concave, that is $\varphi'(x^{\star-})>\varphi'(x^{\star+})$, whereas minimality in the interior gives $\varphi'(x^{\star-})\le0\le\varphi'(x^{\star+})$. Hence $\varphi$ is differentiable at $x^\star$ and $\varphi'(x^\star)=0$. Let $[\xi_{j-1},\xi_j]$ be the arc immediately to the right of $x^\star$, that is, the arc with $\xi_{j-1}=x^\star$ if $x^\star$ is a breakpoint and the arc containing $x^\star$ in its interior otherwise; thus $2a_jx^\star+b_j=0$. If $a_j<0$ then $\varphi$ is strictly decreasing immediately to the right of $x^\star$, contradicting minimality; if $a_j=0$ then $b_j=0$ and $\varphi$ is constant on that arc, so every point of $[x^\star,\xi_j]$ is a minimizer, contradicting the maximality of $x^\star$. Hence $a_j>0$ and $x^\star=-b_j/(2a_j)\in[\xi_{j-1},\xi_j]$.
\end{proof}

The last result of this subsection allows us to dispense with strictly concave arcs when computing convex envelopes.

\begin{lemma}
\label{lem:chord}
Let $\varphi\in\mathcal Q$ have arcs $[\xi_{j-1},\xi_j]$, $j\in[K]$, where $0=\xi_0<\xi_1<\dots<\xi_K=1$, and let $\hat\varphi$ be obtained from $\varphi$ by replacing every strictly concave arc by its chord. Then $\hat\varphi\in\mathcal Q$, $\hat\varphi$ is strictly convex or affine on each $[\xi_{j-1},\xi_j]$, $\hat\varphi(\xi_j)=\varphi(\xi_j)$ for every $j$, and $\env(\hat\varphi|_{[0,\xi_j]})=\env(\varphi|_{[0,\xi_j]})$ for every $j\in[K]$.
\end{lemma}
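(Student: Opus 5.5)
The argument splits into two parts: first the elementary properties of $\hat\varphi$, then the equality of the envelopes. The first three claims are local to arcs and I would dispatch them directly. On each arc $[\xi_{j-1},\xi_j]$, $\hat\varphi$ is either the original polynomial, when $a_j\ge0$, or the affine chord interpolating $\varphi$ at $\xi_{j-1}$ and $\xi_j$. In both cases $\hat\varphi$ agrees with $\varphi$ at the endpoints, so $\hat\varphi(\xi_j)=\varphi(\xi_j)$ and $\hat\varphi$ is continuous and pw-quadratic. Merging of consecutive arcs may occur, but it only removes breakpoints, which is harmless.

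It remains to check that every kink of $\hat\varphi$ is concave. Replacing a strictly concave arc by its chord changes the one-sided derivatives at its endpoints in the right direction. By concavity of $\varphi$ on the arc, $\varphi'(\xi_{j-1}^+)\ge$ chord slope $\ge\varphi'(\xi_j^-)$. Hence at $\xi_j$ the left derivative becomes the chord slope, which is at least $\varphi'(\xi_j^-)$. At $\xi_{j-1}$ the right derivative becomes the chord slope, which is at most $\varphi'(\xi_{j-1}^+)$. Both modifications preserve the inequality $\hat\varphi'(x^-)\ge\hat\varphi'(x^+)$ at every breakpoint, including when two adjacent arcs are both replaced, since the inequalities chain. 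Any point where equality now holds is simply no longer a kink, or no longer a breakpoint. Thus $\hat\varphi\in\mathcal Q$.

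For the envelope identity, fix $j$ and write $\psi:=\varphi|_{[0,\xi_j]}$ and $\hat\psi:=\hat\varphi|_{[0,\xi_j]}$; the argument takes place on $[0,\xi_j]$ via the affine rescaling remark.

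First, $\hat\psi\ge\psi$, since a chord of a concave function lies above it. Hence $\env\hat\psi\ge\env\psi$.

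For the reverse inequality, I would show $\env\psi\le\hat\psi$, since then $\env\psi$ is a convex minorant of $\hat\psi$ and so $\env\psi\le\env\hat\psi$. Off the strictly concave arcs, $\hat\psi=\psi\ge\env\psi$. On a strictly concave arc $[\xi_{i-1},\xi_i]$ with $i\le j$, $\hat\psi$ is the chord through $(\xi_{i-1},\psi(\xi_{i-1}))$ and $(\xi_i,\psi(\xi_i))$. By convexity, $\env\psi$ lies below its own chord on that interval. That chord is in turn below the chord of $\psi$ at the same endpoints, because $\env\psi\le\psi$ there. So $\env\psi\le\hat\psi$ on the arc.

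Alternatively, one can argue via the two-point formula \eqref{eq:twopoint}. Every point of a concave-arc chord is a convex combination of endpoint values of $\psi$, so \eqref{eq:twopoint} gives $\env\psi\le\hat\psi$ immediately.

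The main obstacle is only bookkeeping: making sure the kink-concavity check in the second paragraph handles adjacent replaced arcs and the endpoints $0$ and $1$, where there is no kink condition. The envelope equality itself is a two-line sandwich argument.
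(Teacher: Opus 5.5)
\textbf{Gap: the envelope identity proves the same inequality twice.} Your treatment of the first three claims is correct and is the paper's argument. That covers the endpoint values, the strictly convex or affine pieces, and the concavity of kinks via the chord slope lying between $\varphi'(\xi_{j-1}^+)$ and $\varphi'(\xi_j^-)$.

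The envelope identity is not established as written. Your first step asserts $\hat\psi\ge\psi$ ``since a chord of a concave function lies above it''. This is backwards: a concave function lies above its chords. So on each strictly concave arc the chord lies below $\varphi$, strictly in the interior of the arc, and the correct relation is $\hat\psi\le\psi$.

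As a consequence, the inequality you derive from it, $\env\hat\psi\ge\env\psi$, is the \emph{same} inequality as your ``reverse'' one, $\env\psi\le\env\hat\psi$. Both halves of your sandwich prove $\env\psi\le\env\hat\psi$, and $\env\hat\psi\le\env\psi$ is never proved. A warning sign is that if $\hat\psi\ge\psi$ were true, your second argument would be unnecessary, since $\env\psi\le\psi\le\hat\psi$ would be immediate.

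\textbf{Repair.} The fix is one line and is exactly the paper's first step. Since $\hat\psi\le\psi$, the convex function $\env\hat\psi$ is a minorant of $\psi$, hence $\env\hat\psi\le\env\psi$.

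Your second argument then supplies the other inequality and is correct. It runs: $\env\psi$ is convex and at most $\psi=\hat\psi$ at every arc endpoint, hence at most every chord, hence at most $\hat\psi$, so $\env\psi\le\env\hat\psi$. The variant via the two-point formula of Lemma~\ref{lem:carrier} works equally well. With the direction corrected, your proof coincides with the paper's.
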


\begin{proof}
Clearly $\hat\varphi$ is continuous, agrees with $\varphi$ at every $\xi_j$, and is strictly convex or affine on each $[\xi_{j-1},\xi_j]$; its breakpoints are among the $\xi_j$. On a strictly concave arc $[\alpha,\beta]$, the slope $\gamma$ of the chord satisfies $\varphi'(\alpha^+)>\gamma>\varphi'(\beta^-)$. Hence at every breakpoint $\xi$ of $\varphi$ the left derivative of $\hat\varphi$ is at least $\varphi'(\xi^-)$ and the right derivative is at most $\varphi'(\xi^+)$; as $\varphi'(\xi^-)\ge\varphi'(\xi^+)$, every kink of $\hat\varphi$ is concave, and $\hat\varphi\in\mathcal Q$. Finally fix $j$. Since $\hat\varphi\le\varphi$, we have $\env(\hat\varphi|_{[0,\xi_j]})\le\env(\varphi|_{[0,\xi_j]})$. Conversely, $\env(\varphi|_{[0,\xi_j]})$ is convex and at most $\varphi=\hat\varphi$ at the endpoints of every arc, hence at most every chord, and so at most $\hat\varphi$ on $[0,\xi_j]$; therefore $\env(\varphi|_{[0,\xi_j]})\le\env(\hat\varphi|_{[0,\xi_j]})$.
\end{proof}

\subsection{Structure of the value functions}
\label{sec:arithmetic}

We now apply the results of Section~\ref{sec:envelope} to the value functions of \eqref{eq:message}. We write $K_v$ for the number of arcs of $\varphi_v|_{[0,1]}$.

\begin{proposition}
\label{prop:master}
For every $v\in V$, the restriction of $\varphi_v$ to $[0,1]$ is pw-quadratic with $K_v\le 2|T_v|-1$ arcs and all its kinks are concave; that is, $\varphi_v|_{[0,1]}\in\mathcal Q$. Moreover, for every $v\ne r$, the value function $\mu_v$ is a concave pw-quadratic function on $\mathbb R$ with at most $2|T_v|+1$ arcs.
\end{proposition}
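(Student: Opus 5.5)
We argue by induction on the height of $v$, proving both assertions together: first the claim on $\varphi_v$ (using the claim on $\mu_j$ for children $j$), then the claim on $\mu_v$ via Lemma~\ref{lem:conjugate}. For a leaf $v$, $\varphi_v(x)=q_{vv}x^2+c_vx$ is a single quadratic on $[0,1]$. Hence $K_v=1=2|T_v|-1$, there are no kinks, and $\varphi_v|_{[0,1]}\in\mathcal Q$. For $v\ne r$, once $\varphi_v|_{[0,1]}\in\mathcal Q$ with $K_v\le 2|T_v|-1$ arcs is known, Lemma~\ref{lem:conjugate} applies with $s=s_v\neq0$. It shows that $\mu_v$ is concave and pw-quadratic on $\mathbb R$ with at most $K_v+2\le 2|T_v|+1$ arcs, which is the second assertion.

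For the inductive step at an internal vertex $v$, assume that each child $j\in\ch(v)$ has $\mu_j$ concave and pw-quadratic on $\mathbb R$ with at most $2|T_j|+1$ arcs. The sum $\sum_j\mu_j$ is continuous, and on $[0,1]$ it is pw-quadratic. Its breakpoints in $(0,1)$ lie in the union of the breakpoints of the $\mu_j$ inside $(0,1)$. Adding the quadratic $q_{vv}x^2+c_vx$ creates no new breakpoints. At any point $x$, each $\mu_j$ is concave, so by Lemma~\ref{lem:concavejumps} $\mu_j'(x^-)\ge\mu_j'(x^+)$. Summing these inequalities gives $\varphi_v'(x^-)\ge\varphi_v'(x^+)$, so every kink of $\varphi_v$ is concave. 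Therefore $\varphi_v|_{[0,1]}\in\mathcal Q$.

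The arc count is the main point: we must show that the counts add rather than multiply. Function $\mu_j$ has at most $2|T_j|+1$ arcs on $\mathbb R$, hence at most $2|T_j|$ breakpoints, and restricting to $[0,1]$ keeps at most that many in $(0,1)$. This crude bound gives $K_v\le 1+\sum_j 2|T_j| = 1+2(|T_v|-1)=2|T_v|-1$, which is exactly the claimed bound. Merging and deduplicating coinciding breakpoints only decreases the count, as does maximality of arcs.

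I expect no step to be a serious obstacle. The one subtlety is that this bookkeeping needs the arc bound for $\mu_j$ on all of $\mathbb R$, not just on $[0,1]$, with exactly $+2$ from Lemma~\ref{lem:conjugate}, so that the "$+1$" terms do not accumulate over children. That is why the induction carries $2|T_v|+1$ for $\mu_v$ and $2|T_v|-1$ for $\varphi_v$ in parallel.
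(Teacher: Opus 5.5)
Your proof is correct and follows essentially the same route as the paper. Both argue by induction on height, bound the breakpoints of $\varphi_v|_{[0,1]}$ by the at most $\sum_{j\in\ch(v)}2|T_j|=2(|T_v|-1)$ breakpoints of the children's value functions, obtain concavity of the kinks from Lemma~\ref{lem:concavejumps} because derivative jumps add, and apply Lemma~\ref{lem:conjugate} with $s_v\ne0$ to get at most $K_v+2\le 2|T_v|+1$ arcs for $\mu_v$. The only cosmetic difference is that you take the concavity of the $\mu_j$ from the inductive hypothesis (via Lemma~\ref{lem:conjugate}) rather than from Lemma~\ref{lem:concave}, which is equally valid.
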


\begin{proof}
Induction on the height of $v$. Let $v\in V$ and assume the proposition for all $j\in\ch(v)$, a vacuous assumption if $v$ is a leaf. Each $\mu_j$ is then pw-quadratic on $\mathbb R$ with at most $2|T_j|$ breakpoints, so $\varphi_v|_{[0,1]}$ is pw-quadratic and its breakpoints are among the breakpoints of the $\mu_j$ lying in $(0,1)$, of which there are at most $\sum_{j\in\ch(v)}2|T_j|=2(|T_v|-1)$; hence $K_v\le2|T_v|-1$. At every breakpoint, the jump of $\varphi_v'$ is the sum of the jumps of the $\mu_j'$, each nonpositive by Lemmas~\ref{lem:concave} and~\ref{lem:concavejumps}; so every kink of $\varphi_v$ is concave, and $\varphi_v|_{[0,1]}\in\mathcal Q$.

Let now $v\ne r$. By \eqref{eq:message}, $\mu_v$ is the function $\mu$ of Lemma~\ref{lem:conjugate} for $(\varphi,s)=(\varphi_v|_{[0,1]},s_v)$, where $s_v\ne0$. Hence $\mu_v$ is a concave pw-quadratic function with at most $K_v+2\le2|T_v|+1$ arcs, which completes the induction.
\end{proof}

\begin{remark}
\label{rem:tight}
The bounds of Proposition~\ref{prop:master} are attained, even when $Q\succ0$. For $n\ge2$ and $w_i:=4^{i-1}$, let the objective of Problem~\eqref{eq:QP} be $\sum_{i=1}^{n-1}w_i(x_i-3x_{i+1}+1)^2+w_n(x_n-\frac12)^2$ minus its constant term, so that the interaction graph is the path $1,2,\dots,n$, which we root at $n$. It can be checked that every $\varphi_i$ is strongly convex and every breakpoint of $\mu_i$ lies in $(0,1)$, and hence that $K_i=2i-1$ for every $i\in[n]$ and $\mu_i$ has $2i+1$ arcs for every $i<n$. In particular, $\sum_{v\in V}K_v=n^2$.
\end{remark}


We next show that all arcs produced by the recursion~\eqref{eq:message} have rational coefficients.

\begin{lemma}
\label{lem:rational}
Every arc of every $\mu_v$, $v\ne r$, and of every $\varphi_v|_{[0,1]}$ has rational coefficients. Every breakpoint $\tau$ of $\mu_v$ is a root of $\pi_1-\pi_2$, where $\pi_1\ne\pi_2$ are the polynomials of the two arcs of $\mu_v$ meeting at $\tau$; this is a nonzero polynomial of degree at most two with rational coefficients, which we call the \emph{defining quadratic} of $\tau$. Every breakpoint of $\varphi_v|_{[0,1]}$ is a breakpoint of some $\mu_j$, $j\in\ch(v)$, and we take its defining quadratic to be that of this breakpoint of $\mu_j$.
\end{lemma}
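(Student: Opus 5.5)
We argue by induction on the height of $v$, following the structure of Proposition~\ref{prop:master}, and we prove rationality of the arcs of $\varphi_v|_{[0,1]}$ and of $\mu_v$ together. Suppose every arc of every $\mu_j$, $j\in\ch(v)$, has rational coefficients; this holds vacuously if $v$ is a leaf.

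We first handle $\varphi_v$. By \eqref{eq:message}, on each arc of $\varphi_v|_{[0,1]}$ the function $\varphi_v$ equals $q_{vv}x^2+c_vx$ plus the sum of one arc polynomial of each $\mu_j$. This is a polynomial of degree at most two with rational coefficients. We also need that every breakpoint of $\varphi_v|_{[0,1]}$ is a breakpoint of some $\mu_j$. This was already observed in the proof of Proposition~\ref{prop:master}: off the breakpoints of the $\mu_j$, $\varphi_v$ is locally a quadratic. This settles the last sentence of the statement, once the defining quadratics of the $\mu_j$ are defined.

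Next we handle $\mu_v$ for $v\ne r$, using the explicit description in Lemma~\ref{lem:conjugate} with $\varphi=\varphi_v|_{[0,1]}$ and $s=s_v\in\Q\setminus\{0\}$. Its arcs are of three kinds:
\begin{itemize}
\item the constant $\varphi_v(0)$, which is the value at $0$ of a rational polynomial and hence rational;
\item the affine function $\varphi_v(1)+s_vt$, which is rational for the same reason;
\item the functions $-\frac{(s_vt+b)^2}{4a}+d$ from \eqref{eq:conjformula}, where $ax^2+bx+d$ is a strictly convex arc of $\varphi_v$, so $a\in\Q$ with $a>0$ and $b,d\in\Q$.
\end{itemize}
Expanding the third kind gives the quadratic polynomial $-\frac{s_v^2}{4a}t^2-\frac{s_vb}{2a}t-\frac{b^2}{4a}+d$, whose coefficients are rational. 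Lemma~\ref{lem:conjugate} states that these ranges cover $\mathbb R$ up to finitely many points. By continuity of $\mu_v$ (Lemma~\ref{lem:concave}), every arc of $\mu_v$ therefore carries one of these polynomials, which completes the induction for arcs.

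For the breakpoint claim, let $\tau$ be a breakpoint of $\mu_v$, and let $\pi_1,\pi_2$ be the polynomials of the two adjacent maximal arcs. They are distinct because arcs are taken maximal. Since $\mu_v$ is continuous, $\pi_1(\tau)=\pi_2(\tau)$, so $\tau$ is a root of the nonzero rational polynomial $\pi_1-\pi_2$, which has degree at most two.

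The main point needing care is the claim that \emph{every} arc of $\mu_v$ is among the listed ones, so that no arc arises elsewhere, for instance at the finitely many exceptional slopes. Here we rely on the final sentence of Lemma~\ref{lem:conjugate} together with continuity: an isolated point cannot host a nondegenerate arc.
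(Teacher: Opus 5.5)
Your proof is correct and follows the paper's argument essentially step for step. The common steps are: induction on height; each arc of $\varphi_v|_{[0,1]}$ is $q_{vv}x^2+c_vx$ plus one arc polynomial of each $\mu_j$; the three kinds of arcs of $\mu_v$ are read off from Lemma~\ref{lem:conjugate}; and the defining quadratic comes from continuity plus maximality of arcs.

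Your extra justification that every arc of $\mu_v$ carries one of the listed polynomials makes explicit a step the paper leaves implicit. Strictly, that step rests on a polynomial agreeing with another on infinitely many points, rather than on continuity. Likewise, your claim about the arcs of $\varphi_v$ is best phrased, as in the paper, as ``coincides on a nondegenerate subinterval, hence as a polynomial.''
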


\begin{proof}
By Proposition~\ref{prop:master} these functions are pw-quadratic. We argue by induction on the height of $v$. Each arc of $\varphi_v|_{[0,1]}$ coincides on a nondegenerate subinterval, and hence as a polynomial, with the sum of $q_{vv}x^2+c_vx$ and of one arc of each $\mu_j$, $j\in\ch(v)$, so it has rational coefficients by induction; and the breakpoints of $\varphi_v|_{[0,1]}$ are breakpoints of the $\mu_j$. By Lemma~\ref{lem:conjugate}, each arc of $\mu_v$ is $\varphi_v(0)$, or $\varphi_v(1)+s_vt$, or $-\frac{(s_vt+b)^2}{4a}+d$ with $(a,b,d)$ the coefficients of an arc of $\varphi_v$; since $\varphi_v(0)$ and $\varphi_v(1)$ are values of arcs of $\varphi_v$ at $0$ and $1$, and $s_v\in\Q$, it has rational coefficients. Let now $\tau$ be a breakpoint of $\mu_v$ and let $\pi_1,\pi_2$ be the polynomials of the two arcs of $\mu_v$ meeting at $\tau$. They have rational coefficients by what we just proved, and $\pi_1\ne\pi_2$ because arcs are maximal. By continuity of $\mu_v$, $\pi_1(\tau)=\pi_2(\tau)$, so $\tau$ is a root of the nonzero polynomial $\pi_1-\pi_2$, which has rational coefficients and degree at most two.
\end{proof}

We represent a real number $\tau$ of degree at most two over $\Q$ by rationals $\tau_0,\tau_1,D$ with $D\ge0$ and $\tau=\tau_0+\tau_1\sqrt D$, and call $(\tau_0,\tau_1,D)$ a \emph{representation} of $\tau$. If $\tau$ is a root of a nonzero polynomial $\pi$ of degree at most two with rational coefficients, a representation of $\tau$ is obtained from the coefficients of $\pi$ by the quadratic formula with $O(1)$ operations, once it is known which root of $\pi$ is $\tau$; this fixes the sign of $\tau_1$.

We next record an observation that holds for every box-constrained quadratic program, not only on trees; in it, the empty matrix is regarded as positive definite. It is implicit in the proof of NP membership of quadratic programming by Vavasis~\cite{Vav90}.

\begin{lemma}
\label{lem:pdface}
Let $Q\in\R^{n\times n}$ be symmetric and $c\in\R^n$. Then $\min\{x^\top Qx+c^\top x:x\in[0,1]^n\}$ has an optimal solution $x^\star$ such that $Q_S\succ0$, where $S:=\{i\in[n]:0<x^\star_i<1\}$.
\end{lemma}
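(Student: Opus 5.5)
We argue by an extremal choice among optimal solutions. The feasible set is compact and the objective $f(x)=x^\top Qx+c^\top x$ is continuous, so the set of optimal solutions is nonempty. Among all optimal solutions, pick $x^\star$ minimizing $|S(x^\star)|$, the number of fractional coordinates, where $S(x):=\{i:0<x_i<1\}$. We then claim that $Q_S\succ0$ for $S:=S(x^\star)$. If $S=\emptyset$ this holds by the convention on the empty matrix, so assume $S\ne\emptyset$.

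Fix the coordinates outside $S$ at their values in $x^\star$ and consider the restricted function
\[
g(y):=y^\top Q_S y+\big(c_S+2Q_{S,\bar S}x^\star_{\bar S}\big)^\top y+\text{const},\qquad y\in[0,1]^S,
\]
where $\bar S:=[n]\setminus S$. Then $y^\star:=x^\star_S$ minimizes $g$ over $[0,1]^S$, and it lies in the interior $(0,1)^S$. Two consequences follow from interior optimality: the gradient vanishes, $\nabla g(y^\star)=0$, and the second-order necessary condition gives $Q_S\succeq0$.

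It remains to rule out a singular $Q_S$. Suppose there is $d\ne0$ with $Q_Sd=0$. Along the line $y^\star+\theta d$ we have
\[
g(y^\star+\theta d)=g(y^\star)+\theta\,\nabla g(y^\star)^\top d+\theta^2 d^\top Q_Sd=g(y^\star),
\]
since both the linear and quadratic terms vanish. So $g$ is constant on this line. Move $\theta$ from $0$ until the first coordinate of $y^\star+\theta d$ reaches $0$ or $1$; this happens at some finite $\theta$ because $d\ne0$.

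The resulting point, combined with $x^\star_{\bar S}$, is still feasible and optimal, but it has strictly fewer fractional coordinates: all coordinates in $\bar S$ are unchanged and remain binary, and at least one coordinate of $S$ has become $0$ or $1$. This contradicts the minimality of $|S(x^\star)|$, so $Q_S\succ0$.

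The only delicate point is the justification of $Q_S\succeq0$ together with $\nabla g(y^\star)=0$, which comes from $y^\star$ being an interior local minimizer of a quadratic function. Alternatively, one can bypass $\nabla g(y^\star)=0$ in the singular case. If $Q_S$ has a direction $d$ with $d^\top Q_Sd<0$, then $g(y^\star+\theta d)=g(y^\star)+\theta\,\nabla g(y^\star)^\top d+\theta^2 d^\top Q_Sd$ decreases for small $\theta$ of the appropriate sign, contradicting optimality. If instead $d^\top Q_Sd=0$ with $Q_S\succeq0$, then $Q_Sd=0$, and $g$ is affine along the line; an affine function with an interior minimum on the segment is constant there, so the same pushing argument applies.
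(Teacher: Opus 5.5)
Your proof is correct and is essentially the paper's argument. The paper also picks an optimal solution with the most coordinates in $\{0,1\}$ (equivalently, the fewest fractional ones) and uses interior optimality on $S$ to get a vanishing gradient and $Q_S\succeq 0$. It then moves along a kernel direction of a singular $Q_S$, on which the objective is constant, until a further coordinate reaches a bound, contradicting the extremal choice.
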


\begin{proof}
Write $f(x):=x^\top Qx+c^\top x$. Among the optimal solutions, which exist by compactness, let $x^\star$ have the largest number of coordinates in $\{0,1\}$, and let $S$ be as in the statement. For every $d\in\R^n$ with $d_{[n]\setminus S}=0$, the point $x^\star+td$ is feasible for all $t$ in a neighborhood of $0$, and $f(x^\star+td)=f(x^\star)+t\,\nabla f(x^\star)^\top d+t^2\,d_S^\top Q_Sd_S$; optimality of $x^\star$ forces $\nabla_Sf(x^\star):=(\nabla f(x^\star))_S=0$ and $d_S^\top Q_Sd_S\ge0$, so $Q_S\succeq0$. If $Q_S$ were singular, pick $0\ne d_S\in\ker Q_S$, extended by zero outside $S$. Then $f(x^\star+td)=f(x^\star)$ for every $t$. Let $\bar t$ be the largest $t\ge0$ with $x^\star+td\in[0,1]^n$, which is finite as $d\ne0$; then $x^\star+\bar td$ is an optimal solution with more coordinates in $\{0,1\}$ than $x^\star$, a contradiction. Hence $Q_S\succ0$.
\end{proof}

We now give a closed-form description of the arcs of the value functions, which bypasses the recursion, along which lengths could a priori double at every level. In it, $S$ collects the coordinates that are left free, while the coordinates outside $S$ are fixed at the bounds prescribed by $\beta$.

\begin{lemma}
\label{lem:activeset}
For every $v\ne r$, every arc of $\mu_v$ coincides, as a polynomial in $t$, with the function
\begin{equation}
\label{eq:activeset}
t\ \longmapsto\ \min\big\{F_v(y;t)\ :\ y\in\mathbb R^{T_v},\ y_{T_v\setminus S}=\beta\big\}
\end{equation}
for some $S\subseteq T_v$ with $Q_S\succ0$ and some $\beta\in\{0,1\}^{T_v\setminus S}$.
\end{lemma}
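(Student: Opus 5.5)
Fix $v\ne r$ and an arc $I$ of $\mu_v$; $I$ is a nondegenerate interval of $\mathbb R$, so we may pick $t$ in its interior. The idea is to combine Lemma~\ref{lem:dp} with Lemma~\ref{lem:pdface}, applied to the subproblem on $T_v$ with parent value $t$, and then use finiteness of the choices $(S,\beta)$ to pass from pointwise to polynomial identity.

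For each $t\in\mathbb R$, Lemma~\ref{lem:dp} gives $\mu_v(t)=\min\{F_v(y;t):y\in[0,1]^{T_v}\}$. This is an instance of Problem~\eqref{eq:QP} with matrix $Q_{T_v}$ and linear term $c_{T_v}+s_vt\,e_v$. Lemma~\ref{lem:pdface} then yields an optimal $y^\star$ such that $Q_S\succ0$, where $S:=\{i:0<y^\star_i<1\}$; set $\beta:=y^\star_{T_v\setminus S}\in\{0,1\}^{T_v\setminus S}$.

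Denote by $g_{S,\beta}(t)$ the function in \eqref{eq:activeset}. Since $Q_S\succ0$, the inner problem is an unconstrained strictly convex quadratic in $y_S$. Its minimizer is an affine function of $t$, and $g_{S,\beta}$ is a polynomial of degree at most two in $t$, given explicitly by the Schur-complement formula. We compare $g_{S,\beta}$ with $\mu_v$ in two directions.
\begin{itemize}
\item Since $y^\star_S$ is an interior optimum, $\nabla_S F_v(y^\star;t)=0$. By strict convexity, $y^\star$ is the unique minimizer of the inner problem, so $\mu_v(t)=g_{S,\beta}(t)$.
\item Conversely, $g_{S,\beta}(t')\le F_v(y;t')$ for every feasible $y$ with $y_{T_v\setminus S}=\beta$. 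However, $g_{S,\beta}$ minimizes over $\mathbb R^{S}$, not $[0,1]^S$, so it can drop below $\mu_v$ at other $t'$. Hence no inequality between $g_{S,\beta}$ and $\mu_v$ holds globally, and we only use the equality at points $t$ where $(S,\beta)$ arises from Lemma~\ref{lem:pdface}.
\end{itemize}

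The pairs $(S,\beta)$ are finitely many. Every $t$ in the interior of $I$ therefore lies in the set $\{t:\mu_v(t)=g_{S,\beta}(t)\}$ for at least one pair, so the interior of $I$ is covered by finitely many such sets. Some pair must occur for infinitely many $t$ in the interior of $I$; for instance, the sets are closed by continuity and the interval cannot be a finite union of finite sets. On those $t$, $\mu_v$ coincides with the quadratic polynomial $\pi_I$ of the arc $I$. Two polynomials of degree at most two agreeing at infinitely many (in fact three) points are identical, so $\pi_I=g_{S,\beta}$ as polynomials.

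The main obstacle is exactly this pointwise-to-polynomial step. $\mu_v$ is only pw-quadratic and the pair $(S,\beta)$ depends on $t$, so one must avoid claiming equality on all of $I$ directly. The pigeonhole/three-point argument handles it, using only that arcs are nondegenerate and that both sides are polynomials of degree at most two. A minor point to check is that Lemma~\ref{lem:pdface} is stated for generic $(Q,c)$ with the empty matrix allowed as positive definite. This covers the case $S=\emptyset$, where $g_{S,\beta}$ is simply $F_v(\beta;t)$, affine in $t$.
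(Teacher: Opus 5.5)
Your proof is correct and matches the paper's argument essentially step by step. For each $t$, Lemma~\ref{lem:pdface} applied to $(Q_{T_v},c_{T_v}+s_vte_v)$ gives a pair $(S,\beta)$ with $\mu_v(t)=g_{S,\beta}(t)$, because $\nabla_SF_v$ vanishes and $Q_S\succ0$; a pigeonhole over the finitely many pairs, on the infinitely many points of the nondegenerate arc, then gives the polynomial identity (your second bullet, about the missing global inequality, is not used and can be dropped).
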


\begin{proof}
For $S\subseteq T_v$ with $Q_S\succ0$ and $\beta\in\{0,1\}^{T_v\setminus S}$, denote by $\bar F_{S,\beta}$ the function \eqref{eq:activeset}. The restriction of $F_v(\cdot\,;t)$ to $\{y:y_{T_v\setminus S}=\beta\}$ is a strictly convex quadratic in $y_S$ whose coefficients are affine in $t$, so $\bar F_{S,\beta}$ is a polynomial in $t$ of degree at most two.

Now fix $t\in\R$. By Lemma~\ref{lem:pdface}, applied to the matrix $Q_{T_v}$ and the vector $c_{T_v}+s_vte_v$, the problem $\min\{F_v(y;t):y\in[0,1]^{T_v}\}$ has a minimizer $y^\star$ with $Q_S\succ0$ for $S:=\{u\in T_v:0<y^\star_u<1\}$; put $\beta:=y^\star_{T_v\setminus S}\in\{0,1\}^{T_v\setminus S}$. Since $y^\star$ is a minimizer and $y^\star_S$ lies in the interior of $[0,1]^S$, we have $\nabla_SF_v(y^\star;t)=0$; as $Q_S\succ0$, the point $y^\star$ minimizes $F_v(\cdot\,;t)$ over $\{y\in\R^{T_v}:y_{T_v\setminus S}=\beta\}$, so $\mu_v(t)=F_v(y^\star;t)=\bar F_{S,\beta}(t)$ by Lemma~\ref{lem:dp}.

Finally, let $\pi$ be an arc of $\mu_v$, a polynomial of degree at most two that coincides with $\mu_v$ on a nondegenerate interval $I$. As there are finitely many pairs $(S,\beta)$ and infinitely many $t\in I$, some pair satisfies $\pi(t)=\bar F_{S,\beta}(t)$ for infinitely many $t$, and then $\pi=\bar F_{S,\beta}$ as polynomials.
\end{proof}

We can now bound the lengths of all the numbers that describe the value functions.

\begin{lemma}
\label{lem:bits}
All arc coefficients of all $\mu_v$ and $\varphi_v|_{[0,1]}$, and the coefficients of the defining quadratics and the representations of their breakpoints, are rationals of length polynomial in $\L$.
\end{lemma}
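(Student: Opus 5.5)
The plan is to use the closed form of Lemma~\ref{lem:activeset}, which expresses every arc directly in terms of a small linear system. This avoids tracking lengths through the recursion, where they could grow at every level.

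First, the arcs of $\mu_v$. Fix an arc and let $(S,\beta)$ be the pair given by Lemma~\ref{lem:activeset}. We write $F_v(y;t)$ with $y_{T_v\setminus S}=\beta$ as
\[
y_S^\top Q_S y_S+(g+t h)^\top y_S+\kappa_0+\kappa_1 t .
\]
Here $g=c_S+2Q_{S,T_v\setminus S}\beta$, $h=s_v e_v$ restricted to $S$, and $\kappa_0,\kappa_1$ are rationals built from $\beta$, $Q$, $c$ and $s_v$. Each of these has length polynomial in $\L$, because $\beta$ is a $0/1$ vector and the entries are sums of at most $n^2$ input entries. The minimizer is $y_S=-\tfrac12 Q_S^{-1}(g+th)$, so the arc equals
\[
-\tfrac14 (g+th)^\top Q_S^{-1}(g+th)+\kappa_0+\kappa_1 t .
\]
Its three coefficients are $-\tfrac14 h^\top Q_S^{-1}h$, $-\tfrac12 g^\top Q_S^{-1}h+\kappa_1$ and $-\tfrac14 g^\top Q_S^{-1}g+\kappa_0$. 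The entries of $Q_S^{-1}$ are ratios of subdeterminants of $Q_S$, and these have length polynomial in $\langle Q\rangle$ by the standard bound of~\cite{GroLovSch88,SchBookIP}. A bilinear form in polynomially many such numbers with short vectors therefore has polynomial length. To see this, write everything over the common denominator $\det Q_S$ and bound the numerators by Hadamard-type estimates. This argument works for every arc of every $\mu_v$, independently of the depth of $v$.

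Next, the arcs of $\varphi_v|_{[0,1]}$. By the proof of Lemma~\ref{lem:rational}, each arc is $q_{vv}x^2+c_vx+\sum_{j\in\ch(v)}\pi_j$, where $\pi_j$ is an arc of $\mu_j$. This is a sum of at most $n$ rationals of polynomial length, hence again of polynomial length. To keep the total bounded polynomially rather than multiplied by $n$, I would instead apply the same Lemma~\ref{lem:activeset}-style argument to $\varphi_v$ directly: fixing $y_v=x$, the sum is the minimum of $F$ over $T_v\setminus\{v\}$ with the children free sets $S_j$, which is again a quadratic form given by a single block-diagonal inverse. Either route gives a polynomial bound, since adding $n$ rationals whose denominators are determinants only multiplies the length by $O(n)$.

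Finally, the defining quadratics and breakpoint representations. A defining quadratic is the difference $\pi_1-\pi_2$ of two arcs, so its coefficients have polynomial length. The representation $(\tau_0,\tau_1,D)$ comes from the quadratic formula applied to $\pi_1-\pi_2=\alpha t^2+\beta' t+\gamma$. If $\alpha\neq0$, then $\tau_0=-\beta'/(2\alpha)$, $\tau_1=\pm1/(2\alpha)$ and $D=\beta'^2-4\alpha\gamma$. If $\alpha=0$, then $\tau=-\gamma/\beta'$ and $\tau_1=0$. Each of these is obtained from the coefficients by $O(1)$ arithmetic operations, so it has polynomial length. Breakpoints of $\varphi_v$ inherit their defining quadratics from those of the $\mu_j$.

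The main obstacle I expect is the first step: bounding precisely the lengths of $g^\top Q_S^{-1}g$ and the related quantities. The fix is to work with $\operatorname{adj}(Q_S)/\det Q_S$ and the standard determinant bound, which gives length $O(\poly(\L))$.
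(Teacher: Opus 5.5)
Your proposal is correct and follows essentially the same route as the paper. The closed form of Lemma~\ref{lem:activeset} writes each arc of $\mu_v$ as $\gamma_0+\gamma_1t-\tfrac14(\theta+t\psi)^\top Q_S^{-1}(\theta+t\psi)$, and the entries of $Q_S^{-1}$ are bounded by the determinant/Cramer bound (the paper cites Corollary~3.2a of Schrijver). Arcs of $\varphi_v$ are $q_{vv}x^2+c_vx$ plus at most $n$ such arcs, and defining quadratics and representations follow by $O(1)$ operations. Your aside about $\varphi_v$ is unnecessary: a sum of at most $n$ rationals of polynomial length already has polynomial length, which is exactly how the paper concludes.
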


\begin{proof}
All the quantities in question are rational by Lemma~\ref{lem:rational}; it remains to bound their lengths. We use that sums and products of polynomially many rationals of length polynomial in $\L$ have length polynomial in $\L$.

Consider first an arc of a value function $\mu_v$ with $v\ne r$, and let $S$ and $\beta$ be as furnished by Lemma~\ref{lem:activeset}. Fixing $y_{T_v\setminus S}=\beta$ in \eqref{eq:Fv} and separating the terms according to whether they involve the free block or not, we obtain
\[
F_v(y;t)=y_S^\top Q_S\,y_S+\big(\theta+t\psi\big)^\top y_S+\big(\gamma_0+\gamma_1t\big)
\qquad\text{for every } y\in\mathbb R^{T_v} \text{ with } y_{T_v\setminus S}=\beta,
\]
where, for $u\in S$,
\[
\theta_u=c_u+\sum_{\substack{w\in T_v\setminus S\\ \{u,w\}\in E}}2q_{uw}\,\beta_w ,
\qquad
\psi=\begin{cases} s_v\,(e_v)_S & v\in S,\\ 0 & v\notin S,\end{cases}
\]
and $\gamma_0,\gamma_1$ are obtained from the data and from $\beta$ in the same manner. Each $\beta_w$ lies in $\{0,1\}$, so each entry of $\theta$, $\psi$, $\gamma_0$ and $\gamma_1$ is, up to a factor two, a sum of at most $2n^2$ entries of $Q$ and $c$, and therefore has length polynomial in $\L$.

Since $Q_S\succ0$, the minimum in \eqref{eq:activeset} is attained at $y_S=-\tfrac12Q_S^{-1}(\theta+t\psi)$, and the arc formula is
\begin{equation}
\label{eq:arcformula}
\min\big\{F_v(y;t)\ :\ y\in\mathbb R^{T_v},\ y_{T_v\setminus S}=\beta\big\}
=\gamma_0+\gamma_1t-\tfrac14\big(\theta+t\psi\big)^\top Q_S^{-1}\big(\theta+t\psi\big).
\end{equation}
By Corollary~3.2a in~\cite{SchBookIP}, the entries of $Q_S^{-1}$ have length polynomial in the length of $Q_S$, a submatrix of $Q$, and hence polynomial in $\L$. Expanding \eqref{eq:arcformula}, each coefficient of the arc is a sum of at most $n^2+1$ products of at most three such rationals, and therefore has length polynomial in $\L$. The same bound holds for the arcs of $\varphi_v$, since by \eqref{eq:message} each of them is a sum of the quadratic $q_{vv}x^2+c_vx$ and of at most $n$ arc formulas of value functions. Finally, by Lemma~\ref{lem:rational} the defining quadratic of a breakpoint is the difference of two arc polynomials, so its coefficients have length polynomial in $\L$ as well, and so do the rationals of a representation, which are obtained from them by $O(1)$ operations.
\end{proof}

\subsection{Exact computation of the value functions}
\label{sec:computation}

Computing the value functions requires comparing their breakpoints, and also auxiliary numbers, obtained from them by one further square root, that arise while a single value function is computed (Lemma~\ref{lem:envelope}). The next lemma covers both; it is folklore, and we include a proof for completeness.

\begin{lemma}
\label{lem:compare}
Let $k$ and $d$ be fixed positive integers. Let $E_1\in\Q$ and, for $i=2,\dots,k$, let $E_i$ be a polynomial of total degree at most $d$ with rational coefficients in $\sqrt{E_1},\dots,\sqrt{E_{i-1}}$, where every $E_i$ is a nonnegative real number. Then the sign of a polynomial of total degree at most $d$ with rational coefficients in $\sqrt{E_1},\dots,\sqrt{E_k}$ can be determined with $O(1)$ arithmetic operations and comparisons on rational numbers, starting from $E_1$ and the coefficients of the given polynomials.
\end{lemma}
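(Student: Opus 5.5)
We argue by induction on $k$, eliminating the outermost square root at each step. The quantity to determine is the sign of $P(\sqrt{E_1},\dots,\sqrt{E_k})$, where $P$ has rational coefficients and total degree at most $d$. Since $k$ and $d$ are fixed, every polynomial that appears below has $O(1)$ monomials and $O(1)$ degree. All bounds therefore depend only on $k$ and $d$.

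\emph{Base case} $k=1$. Reduce $P$ modulo $y_1^2=E_1$ to the form $\alpha+\beta\sqrt{E_1}$, with $\alpha,\beta\in\Q$ obtained from the coefficients of $P$ and powers of $E_1$ in $O(1)$ operations. The sign of $\alpha+\beta\sqrt{E_1}$ then follows from a few cases:
\begin{itemize}
\item if $\alpha$ and $\beta$ are both nonnegative, or both nonpositive, the sign is immediate;
\item otherwise the sign is that of $\alpha$ times the sign of $\alpha^2-\beta^2E_1$, which is a rational number.
\end{itemize}

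\emph{Inductive step.} Suppose the claim holds for $k-1$ square roots, at any fixed degree bound. Reduce $P$ modulo $y_k^2=E_k$; here $E_k$ is itself a polynomial in $\sqrt{E_1},\dots,\sqrt{E_{k-1}}$, so this substitution is legitimate. The result is $A+B\sqrt{E_k}$, where $A,B$ are polynomials in $\sqrt{E_1},\dots,\sqrt{E_{k-1}}$ of degree at most $d^2$, say. We apply the same case analysis as in the base case, with $\alpha,\beta$ replaced by $A,B$:
\begin{itemize}
\item determine the signs of $A$ and of $B$ by the induction hypothesis, applied with the larger but still fixed degree bound;
\item if the signs agree, or one of them is zero, conclude directly, using that $\sqrt{E_k}\ge0$;
\item otherwise, determine the sign of $A^2-B^2E_k$, again a polynomial of fixed degree in $\sqrt{E_1},\dots,\sqrt{E_{k-1}}$, by the induction hypothesis, and multiply it by the sign of $A$.
\end{itemize}
The hypothesis that each $E_i$ is a nonnegative real number is exactly what makes $\sqrt{E_i}$ real and nonnegative, which the case analysis uses.

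The one point needing care is the bookkeeping of degrees and operation counts, since the degree grows at each level. After one elimination step the degree is at most some $d_1=O(d^2)$, and the recursion has depth $k$. The final degree is therefore bounded by a function of $k$ and $d$ alone, and the number of recursive sign queries is at most $3^k$. Each query costs $O(1)$ operations to expand and reduce its polynomial, because the number of monomials is bounded in terms of $k$ and the degree. Hence the total number of arithmetic operations and comparisons is $O(1)$ for fixed $k$ and $d$, which proves the lemma.

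I expect the main obstacle to be this degree bookkeeping, in particular making precise that the induction hypothesis is invoked with a larger but still constant degree bound. One convenient way to handle it is to state the induction over pairs $(k,d)$, with $d$ arbitrary, so that the degree growth is absorbed into the hypothesis.
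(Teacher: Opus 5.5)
Your argument is essentially the paper's: you split off $\sqrt{E_k}$ as $A+B\sqrt{E_k}$, obtain the signs of $A$ and $B$ recursively, and in the opposite-sign case use $\operatorname{sign}(A)\cdot\operatorname{sign}(A^2-B^2E_k)$. The paper avoids your degree bookkeeping by reducing everything to multilinear normal form ($2^k$ rationals, closed under ring operations), which does the same job as your induction over pairs $(k,d)$.

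There is one small slip. When $A=0\ne B$, the fact that $\sqrt{E_k}\ge0$ does not tell you whether the value is zero or nonzero. You must also determine the sign of $E_k$, by the induction hypothesis (or by one rational comparison when $k=1$), exactly as the paper does.
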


\begin{proof}
Using $\sqrt{E_i}^{\,2}=E_i$, every such polynomial $\alpha$ can be written as $\alpha=P+R\sqrt{E_k}$, where $P$ and $R$ are polynomials with rational coefficients in $\sqrt{E_1},\dots,\sqrt{E_{k-1}}$; recursively, $\alpha$ is represented by $2^k$ rational numbers, and sums and products of such numbers cost $O(1)$ operations. As all the given polynomials have total degree at most $d$, these representations are obtained with $O(1)$ operations. We argue by induction on $k$, the case $k=0$ being a comparison of a rational number with zero. Let $k\ge1$ and determine the signs of $P$ and $R$ by induction. If $R=0$, or if $P$ and $R$ are both positive or both negative, the sign of $\alpha$ is that of $P$. If $P=0\ne R$, the sign of $\alpha$ is the product of the signs of $R$ and $E_k$, and the sign of $E_k$ is determined by induction. Otherwise $P$ and $R$ have opposite signs, and
\[
\operatorname{sign}\big(P+R\sqrt{E_k}\big)=\operatorname{sign}(P)\cdot\operatorname{sign}\big(P^2-R^2E_k\big),
\]
since for $P>0>R$ we have $P+R\sqrt{E_k}>0$ if and only if $P>-R\sqrt{E_k}$, that is $P^2>R^2E_k$, and symmetrically for $P<0<R$. As $E_k$ is a polynomial in $\sqrt{E_1},\dots,\sqrt{E_{k-1}}$, so is $P^2-R^2E_k$, and its sign is determined by induction. Altogether the number of operations is bounded by a constant depending only on $k$ and $d$.
\end{proof}

We compute a value function from the corresponding function $\varphi$ in two steps: first the convex envelope of $\varphi$, then the value function itself. For $\varphi\in\mathcal Q$ with arcs $[\xi_{j-1},\xi_j]$, let $\hat\varphi$ be as in Lemma~\ref{lem:chord}, so that $\env\varphi=\env\hat\varphi$. By Lemmas~\ref{lem:carrier} and~\ref{lem:tangent}--\ref{lem:onecontact}, applied to $\hat\varphi$, this envelope is determined by the contact pieces of $\hat\varphi$, consecutive ones being either adjacent or joined by a bridge. We call \emph{envelope list} of $\varphi$ the list of these contact pieces in increasing order, each given by the index $j$ of the arc containing it and by its two endpoints; the \emph{carrier} of such a contact piece is the polynomial of degree at most two that coincides with $\hat\varphi$ on the arc $j$, that is, the polynomial of the arc $j$ of $\varphi$, or its chord if that arc is strictly concave. The sweep in the proof below follows the scheme of the linear-time convex envelope algorithm of Gardiner and Lucet~\cite{GarLuc10} for univariate piecewise linear-quadratic functions, which is designed for real arithmetic. What the next lemma adds is that, for functions of $\mathcal Q$ with rational data, the sweep can be organized so that every endpoint it computes is obtained from the data with at most two nested square roots; this is what allows it to be carried out exactly, with $O(1)$ operations on rational numbers per step.

\begin{lemma}
\label{lem:envelope}
Let $\varphi\in\mathcal Q$ have $K$ arcs with rational coefficients and breakpoints given by representations. Then the envelope list of $\varphi$ can be computed with $O(K)$ arithmetic operations and comparisons on rational numbers. Each endpoint in it is given as a polynomial with rational coefficients in at most two square roots, nested to depth at most two.
\end{lemma}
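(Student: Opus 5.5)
First I replace $\varphi$ by $\hat\varphi$ of Lemma~\ref{lem:chord}; then $\env\varphi=\env\hat\varphi$, and every arc of $\hat\varphi$ is strictly convex or affine with rational coefficients. The chord of a strictly concave arc $[\xi_{j-1},\xi_j]$ has slope and intercept that are rational expressions in $\xi_{j-1},\xi_j$. Hence its coefficients are polynomials in the square roots of the two breakpoint representations. They are not rational, so it is safer to keep them in the form "chord through $(\xi_{j-1},\varphi(\xi_{j-1}))$ and $(\xi_j,\varphi(\xi_j))$". The envelope is then built by a left-to-right sweep in the style of Gardiner--Lucet. I maintain a stack of contact pieces of $\env(\hat\varphi|_{[0,\xi_j]})$, processed arc by arc. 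When arc $j+1$ arrives, I compute the common tangent, or bridge, between the new arc and the top stack piece. If no valid tangency point exists on the top piece, I pop it and retry with the next one. Correctness of each pop-or-stop decision comes from the second part of Lemma~\ref{lem:bridge}. Lemmas~\ref{lem:tangent}--\ref{lem:onecontact} guarantee that each arc contributes at most one contact piece and that bridges meet contact pieces tangentially. Each arc is pushed once and popped at most once, so the total work is $O(K)$ steps, provided each step costs $O(1)$ operations.

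The per-step computation is a bridge between two carriers $p_1(x)=a_1x^2+b_1x+d_1$ on the left and $p_2$ on the right. Each carrier is strictly convex, or affine with its endpoint playing the role of the tangency point. The tangency points $u_1,u_2$ solve
\[
p_1'(u_1)=p_2'(u_2),\qquad p_1(u_1)+p_1'(u_1)(u_2-u_1)=p_2(u_2).
\]
Eliminating $u_2$ through the linear first equation leaves a quadratic equation in $u_1$. If both carriers are genuine arcs of $\varphi$, its coefficients are rational, so $u_1,u_2$ lie in $\Q(\sqrt{D})$: one square root. The degenerate cases bring in breakpoints. One such case is a bridge from a fixed point, a breakpoint $\xi=\tau_0+\tau_1\sqrt{D'}$ or a chord endpoint, tangent to a strictly convex arc. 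Another is a chord carrier, which is affine through two breakpoint points. In these cases the quadratic has coefficients in $\Q(\sqrt{D'})$, and solving it produces a second square root nested over the first. The bridges and tangencies that can occur are point-to-arc, arc-to-arc, and point-to-point. I would check that each of these types yields endpoints in at most two square roots of nesting depth two. The key observation is that every endpoint on the stack is a tangency point of a rational arc or a breakpoint. Endpoints are never computed from other computed endpoints: when a piece is popped, its recorded endpoints are discarded, and the new bridge is recomputed from the carriers and original breakpoints. This is why the depth does not grow during the sweep.

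Every test in the sweep is a sign test. One test asks whether a candidate tangency point lies in the arc's interval. Another asks whether $\ell_0\le\hat\varphi$ at the relevant point, checked through the slope comparison of Lemma~\ref{lem:bridge}. A third picks the correct root of the quadratic. Each such sign is a fixed-degree polynomial in at most three square roots: the two nested ones plus one breakpoint representation. Lemma~\ref{lem:compare} therefore decides each test with $O(1)$ rational operations. \emph{The main obstacle} is the invariant from the previous paragraph, namely that every computed endpoint stays within two nested square roots throughout. The case analysis of carrier types, especially chord carriers, whose coefficients are irrational, must be done carefully. I would treat a chord carrier purely through its two endpoint points rather than through its coefficients, and always recompute from the original data.
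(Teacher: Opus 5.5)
Your proposal is correct and follows the paper's proof essentially step for step. The shared steps are: replacing $\varphi$ by $\hat\varphi$, with chords handled only through their endpoint points; a left-to-right stack sweep whose pop-or-survive decisions rest on the second part of Lemma~\ref{lem:bridge}; bridges chosen among $O(1)$ candidates of the three kinds (common tangent of two rational parabolas, tangent from a breakpoint to a rational parabola, line through two breakpoints); tangential contact at interior points, so no computed endpoint is ever used as a pivot; and the invariant that endpoints are computed from the original data only, which caps the nesting at depth two.

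The only slip is your claim of ``at most three'' square roots per test. The survival test, for example, can involve the radicals of a stored endpoint, of both endpoints of the incoming arc, and of the breakpoints defining a chord carrier. This is harmless, since Lemma~\ref{lem:compare} allows any fixed number of radicals.
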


\begin{proof}
Throughout, every comparison performed is the determination of the sign of a polynomial of bounded degree with rational coefficients in a bounded number of square roots, nested to depth at most two, of the kind described in the next paragraph; each of them costs $O(1)$ operations and comparisons on rational numbers by Lemma~\ref{lem:compare}, and we do not repeat this below. Some of these tests involve quotients, such as slopes of lines through two stored points; their signs are obtained from those of numerators and denominators.

Write $0=\xi_0<\dots<\xi_K=1$ for the endpoints of the arcs of $\varphi$, with representations $\xi_i=\xi_{i,0}+\xi_{i,1}\sqrt{D_i}$, and $(a_j,b_j,d_j)\in\Q^3$ for the coefficients of its $j$-th arc. We work with $\hat\varphi$ throughout, which is strictly convex or affine on each arc, and represent the convex envelopes $H_j:=\env(\varphi|_{[0,\xi_j]})=\env(\hat\varphi|_{[0,\xi_j]})$ computed below by lists of contact pieces of $\hat\varphi|_{[0,\xi_j]}$, as in the envelope list. Every carrier is determined by the index of its arc, so no new coefficients are stored; the chord of a strictly concave arc $j$ is used only through the points $(\xi_{j-1},\varphi(\xi_{j-1}))$ and $(\xi_j,\varphi(\xi_j))$. The endpoints are of three kinds: breakpoints $\xi_i$ of $\varphi$; abscissae of the tangency points of a common tangent of two parabolas with rational coefficients, which are again of the form $\tau_0+\tau_1\sqrt D$ with $\tau_0,\tau_1,D\in\Q$; and abscissae $\eta$ of the tangency point of the tangent from a point $(\xi_i,\varphi(\xi_i))$ to a parabola $y=ax^2+bx+d$ with rational coefficients, which satisfy $a\eta^2-2a\xi_i\eta+\varphi(\xi_i)-b\xi_i-d=0$ and are therefore of the form $\eta=\xi_i\pm\sqrt{\Delta}$ with $\Delta$ a polynomial with rational coefficients in $\xi_i$, hence in $\sqrt{D_i}$. We store every endpoint as such an expression. It is then a polynomial with rational coefficients in at most two square roots nested to depth at most two, and every test below is a polynomial in the square roots of $O(1)$ endpoints, which can be ordered so that each radicand is a polynomial in the preceding ones, as Lemma~\ref{lem:compare} requires. Every endpoint is computed from the arc coefficients and the breakpoints of $\varphi$ alone, never from previously computed endpoints, so the depth of nesting never exceeds two. The endpoints computed during the sweep may have degree four over $\Q$.

We compute $H_1,\dots,H_K$ in turn by a sweep over the arcs from left to right; $H_K=\env\varphi$. The envelope $H_1$ is $\hat\varphi$ on the arc $1$. For the update, apply Lemma~\ref{lem:bridge} to $g:=\hat\varphi|_{[0,\xi_{j+1}]}$ with $\xi:=\xi_j$; the three envelopes of that lemma are then $H_j$, the restriction of $\hat\varphi$ to the arc $j+1$, which is convex, and $H_{j+1}$. Hence there are $\eta\le\xi_j\le\zeta$ with
\[
H_{j+1}=H_j \text{ on } [0,\eta],
\qquad
H_{j+1} \text{ affine on } [\eta,\zeta],
\qquad
H_{j+1}=\hat\varphi \text{ on } [\zeta,\xi_{j+1}].
\]
In particular the update deletes the contact pieces and bridges of $H_j$ lying to the right of $\eta$ and leaves the rest untouched. The arc $j+1$, which we call the incoming arc, is the graph of a strictly convex parabola or a segment. We scan the contact pieces of $H_j$ from right to left. Let $P$ be the rightmost contact piece, lying in the arc $i$, and let $w$ be its left endpoint. If $w=0$, then $P$ survives. Otherwise let $\ell_0$ be the line through $(w,H_j(w))$ with slope $H_j'(w^-)$, which is the line of the bridge ending at $w$ if there is one, and the tangent to the preceding contact piece at $w$ otherwise. By Lemma~\ref{lem:bridge}, $H_{j+1}=H_j$ on $[0,w]$ if and only if $\ell_0$ lies below the incoming arc, a test of $O(1)$ sign determinations: a line lies below the graph of a strictly convex parabola or of an affine function over a compact interval if and only if it does so at the two endpoints and, for a parabola, at the point of the interval where the two slopes agree, if there is one. If it does, $P$ survives. Otherwise $H_{j+1}$ differs from $H_j$ somewhere on $[0,w]$, so $\eta<w$, and $P$ is popped, together with the bridge ending at $w$ if there is one; the scan then continues with the preceding contact piece.

If $P$ survives, the new bridge is the common supporting line $\ell$ from below of the carrier of $P$ over $[w,\xi_i]$ and of the incoming arc, both of which are the graph, over a compact interval, of a strictly convex parabola with rational coefficients or of an affine function. Indeed, the line of $H_{j+1}$ on $[\eta,\zeta]$ lies below $H_j\le\hat\varphi$ and below the incoming arc, and touches both; and a line with these properties is unique, since between its two contact points it coincides with the convex envelope of the union of the two graphs, unless both contacts are at a common endpoint $\xi_i=\xi_j$, in which case $\eta=\zeta$ and no bridge is needed. The line $\ell$ touches each of them at an interior point, where it is tangent, or at an endpoint, and it is found among $O(1)$ candidates by $O(1)$ sign determinations: the common tangent of the two parabolas; the tangent from an endpoint $(\xi_j,\varphi(\xi_j))$ or $(\xi_{j+1},\varphi(\xi_{j+1}))$ of the incoming arc to the parabola carrying $P$; the tangent from $(\xi_i,\varphi(\xi_i))$, or from $(0,\varphi(0))$ if $w=0$, to the incoming parabola; and the line through one of these points on each side. Here a contact with the carrier of $P$ at a point of $(0,\xi_i)$ is a tangency, even at $w$, because such a point is a contact point of $H_j$, where $H_j$ is differentiable by Lemma~\ref{lem:tangent}. If $\ell$ touches the carrier of $P$ or the incoming arc on a nondegenerate interval, which happens only if that carrier or arc is affine, we take $\eta$ as large and $\zeta$ as small as possible, that is, $\eta=\xi_i$ or $\zeta=\xi_j$ in the respective case, so that the contact pieces stored are maximal. In every case $\ell$ touches the carrier of $P$ at a point $\eta\in[w,\xi_i]$ and the incoming arc at a point $\zeta$, both endpoints of the three kinds listed above; moreover $\eta\in P$, since $\ell$ lies below $H_j$ and $\ell(\eta)=\hat\varphi(\eta)$, so that $\eta$ is a contact point of $H_j$, and, by Lemma~\ref{lem:onecontact}, the contact set of $\hat\varphi|_{[0,\xi_j]}$ meets the arc $i$ only in $P$. Then $P$ is replaced by $[w,\eta]$, the incoming arc contributes the contact piece $[\zeta,\xi_{j+1}]$, and $\ell$ is the bridge between them, unless $\eta=\zeta$, in which case no bridge is stored; this completes the update.

It remains to count the operations. Each contact piece is created once and popped at most once during the whole sweep, and each survival test, pop, and bridge computation costs $O(1)$ arithmetic operations and $O(1)$ comparisons on rational numbers, so the sweep costs $O(K)$ of each.
\end{proof}

The second step reads off the value function from the envelope list.

\begin{lemma}
\label{lem:compute}
Let $\varphi\in\mathcal Q$ have $K$ arcs with rational coefficients and breakpoints given by representations, and let $s\in\mathbb Q\setminus\{0\}$. Then the arcs of $\mu(t)=\min_{x\in[0,1]}[\varphi(x)+stx]$ can be computed exactly with $O(K)$ arithmetic operations and comparisons on rational numbers. The arcs produced have rational coefficients, and each breakpoint produced is given by its defining quadratic and a representation. Moreover the arcs of $\mu$ are produced in the order of their breakpoints, increasing if $s<0$ and decreasing if $s>0$.
\end{lemma}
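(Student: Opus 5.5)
We first compute the envelope list of $\varphi$ by Lemma~\ref{lem:envelope}, with $O(K)$ operations, and then read off the arcs of $\mu$ from it using the explicit description in Lemma~\ref{lem:conjugate}. Write $H:=\env\varphi$ and let the envelope list be $P_1,\dots,P_m$, with $m\le K$, in increasing order. Each $P_k=[w_1^k,w_2^k]$ lies in an arc $j_k$ with carrier coefficients $(a,b,d)=(a_{j_k},b_{j_k},d_{j_k})$, or lies on a chord. By Lemma~\ref{lem:conjugate}, in the variable $\sigma=-st$ the function $-\mu$ equals $H^*$. The arcs of $H^*$ are obtained as follows.
\begin{itemize}
\item The constant arc $-\varphi(0)$ for $\sigma\le H'(0^+)$.
\item One arc $\frac{(\sigma-b)^2}{4a}-d$ for each nondegenerate $P_k$ with $a_{j_k}>0$.
\item The affine arc $\sigma-\varphi(1)$ for $\sigma\ge H'(1^-)$.
\end{itemize}
All other pieces (affine contact pieces, bridges, degenerate pieces) only produce kinks of $H^*$. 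We therefore scan the list once, in increasing order of $x$, which is increasing order of slope $H'$. For each nondegenerate strictly convex piece we emit the arc \eqref{eq:conjformula}, whose coefficients are rational because $a,b,d,s\in\Q$. Then $\varphi(0)$ and $\varphi(1)$ are evaluated as rational values of the first and last arcs. This costs $O(1)$ per list entry, hence $O(K)$ in total.

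The breakpoints require care, because the endpoints $w_i^k$ in the envelope list may involve nested square roots, whereas we want each breakpoint of $\mu$ given by a representation of degree at most two. We will not convert the slopes $2aw_i^k+b$ directly. For consecutive emitted arcs $\pi_1,\pi_2$ of $\mu$, including the two unbounded affine ones, we form the defining quadratic $\pi_1-\pi_2\in\Q[t]$. It is nonzero after we merge consecutive emitted arcs with identical polynomials, which keeps the arcs maximal; the merge is checked by comparing three rationals. We then obtain the representation by the quadratic formula with $O(1)$ operations. Lemma~\ref{lem:rational} guarantees that the breakpoint is a root of this quadratic.

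The main obstacle is selecting the correct root. A boundary point between a strictly convex arc and an adjacent arc is a point where the two polynomials agree. Two concave quadratics may meet at two points, so we must identify which one is the breakpoint. We plan to do this by sign tests via Lemma~\ref{lem:compare}. The true breakpoint is $t^\ast=-\sigma^\ast/s$, where $\sigma^\ast$ is the common slope at the junction. This slope is $2aw+b$ for the relevant endpoint $w$ of the contact piece, or equivalently the slope of the bridge. Such a $w$ is a polynomial in at most two nested square roots. We therefore compare each of the two candidate roots $\tau_0\pm\tau_1\sqrt D$ with $-(2aw+b)/s$ using Lemma~\ref{lem:compare}, which takes $O(1)$ operations. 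Alternatively, at a bridge both arcs of $H^*$ meet tangentially to the same supporting line, so the correct root is the one at which the derivatives of $\pi_1$ and $\pi_2$ satisfy the concave-kink inequality. A root at which $\pi_1$ and $\pi_2$ are tangent is a double root and needs no choice.

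Finally, since $\sigma=-st$, increasing $\sigma$ corresponds to increasing $t$ when $s<0$ and to decreasing $t$ when $s>0$. Emitting the arcs in scan order therefore yields the claimed ordering.
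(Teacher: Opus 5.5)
Your proposal is correct and follows essentially the same route as the paper. You compute the envelope list by Lemma~\ref{lem:envelope} and read off the arcs of $\mu$ in scan order via Lemma~\ref{lem:conjugate}. You take each breakpoint as a root of the difference of two consecutive arc polynomials, select the correct root by one exact comparison with $-\sigma/s$ (where $\sigma$ is the slope of $H$ at the junction), and then discard the nested radicals.

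The only difference is your merging safeguard, which is harmless but never triggered. The paper observes that consecutive emitted arcs always carry distinct polynomials, for two reasons: the map $(a,b,d)\mapsto -\frac{(st+b)^2}{4a}+d$ is injective since $s\ne0$, and the parabolas on either side of a bridge touch its line at distinct points. For the same reason, your alternative derivative-based root selection is valid but unnecessary.
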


\begin{proof}
By Lemma~\ref{lem:envelope}, the envelope list of $\varphi$, which describes $H:=\env\varphi$, can be computed with $O(K)$ operations; as there, every comparison below costs $O(1)$ operations by Lemma~\ref{lem:compare}. We read off $\mu$ from $H$ by Lemma~\ref{lem:conjugate}, applied to the function $\hat\varphi$ of Lemma~\ref{lem:chord}, which has the same convex envelope and the same values at $0$ and $1$ as $\varphi$, hence the same $\mu$: each nondegenerate contact piece contained in a strictly convex arc $j$, with coefficients $(a_j,b_j,d_j)$, nondegeneracy being one comparison, contributes the arc $\mu(t)=-\frac{(st+b_j)^2}{4a_j}+d_j$; each bridge, and each contact piece in the list that is degenerate or whose carrier is a segment, contributes at most a breakpoint; and the two unbounded arcs are $\mu(t)=\varphi(0)$ and $\mu(t)=\varphi(1)+st$. This yields the arcs of $\mu$, in order and with rational coefficients, with $O(K)$ further operations. Consecutive arcs produced carry distinct polynomials, so no two need to be merged: two consecutive quadratic arcs come from contact pieces either in consecutive arcs of $\varphi$, which carry distinct polynomials, or separated by an interval on which $H$ is affine, whose line is tangent to both parabolas at two distinct points, so that the parabolas differ; a quadratic arc has $t^2$-coefficient $-s^2/(4a_j)\ne0$, while the two unbounded arcs are affine and distinct as $s\ne0$. A breakpoint $\tau$ of $\mu$ separates two consecutive arcs with polynomials $\pi_1\ne\pi_2$, and by continuity of $\mu$ it is a root of $\pi_1-\pi_2$, which we call its defining quadratic, as in Lemma~\ref{lem:rational}. Moreover $\tau=-\sigma/s$, where $\sigma$ is the slope of $H$ at the corresponding endpoint in the envelope list, or on the corresponding bridge, so deciding which root of $\pi_1-\pi_2$ equals $\tau$ is one comparison, after which a representation of $\tau$ is obtained with $O(1)$ operations; the nested radicals are then discarded. Lastly, the contact pieces in the envelope list are ordered by increasing $x$, hence in increasing order of the slope $\sigma=H'$, and the arc of $\mu$ that a contact piece contributes is indexed by $t=-\sigma/s$; the arcs of $\mu$ are therefore produced in the order of their breakpoints, increasing if $s<0$ and decreasing if $s>0$.
\end{proof}

It remains to recover an optimal solution of Problem~\eqref{eq:QP}.

\begin{lemma}
\label{lem:backtrack}
Problem~\eqref{eq:QP} admits an optimal solution $x^\star\in\mathbb Q^n$, with $\langle x^\star\rangle$ polynomial in $\L$, computable by backtracking from the functions $\varphi_v$ of \eqref{eq:message} with $O\big(\sum_vK_v\big)$ additional operations.
\end{lemma}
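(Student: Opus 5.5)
We backtrack from the root to the leaves, in order of increasing depth, fixing one coordinate at a time. At each vertex $v$ we choose $x^\star_v$ by minimizing a known function over $[0,1]$, and Lemma~\ref{lem:candidates} gives a finite list of candidate minimizers.

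\emph{Root.} At the root $r$, apply Lemma~\ref{lem:candidates} to $\varphi_r|_{[0,1]}\in\mathcal Q$ (Proposition~\ref{prop:master}). The candidates are $0$, $1$, and the vertices $-b_j/(2a_j)$ of strictly convex arcs that lie in their arc. The arc coefficients are rational, so each candidate is rational. Testing whether a vertex lies in $[\xi_{j-1},\xi_j]$ is a comparison with breakpoints given by representations, which costs $O(1)$ by Lemma~\ref{lem:compare}. Evaluating $\varphi_r$ at each candidate uses the rational arc polynomial. Over the $K_r$ arcs this costs $O(K_r)$ operations, and we set $x^\star_r$ to the best candidate. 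By Lemma~\ref{lem:dp}, $\varphi_r(x^\star_r)$ is the optimal value of Problem~\eqref{eq:QP}.

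\emph{Inductive step.} Suppose $x^\star_{p(v)}=t$ has been fixed and equals a minimizer of the subproblem one level up. Then choose $x^\star_v\in\arg\min_{x\in[0,1]}[\varphi_v(x)+s_vtx]$. The function $\varphi_v+s_vt\,x$ differs from $\varphi_v$ by a linear term, so it is still in $\mathcal Q$ with the same arcs and breakpoints, and its arc coefficients $(a_j,b_j+s_vt,d_j)$ are rational. Lemma~\ref{lem:candidates} again gives $O(K_v)$ candidates, computed with $O(K_v)$ operations.

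\emph{Correctness.} By \eqref{eq:Fsplit} and Lemma~\ref{lem:dp}, if $x^\star_{p(v)}$ is part of an optimal solution of the subproblem at $p(v)$, then the $\mu_v(x^\star_{p(v)})$ term is attained by setting $y_v=x^\star_v$. After that, each child subtree $T_j$ is minimized recursively with parent value $x^\star_v$. Induction on depth then shows $f(x^\star)=\min_{[0,1]}\varphi_r$.

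\emph{Main obstacle: lengths.} Each candidate has the form $-(b_j+s_vt)/(2a_j)$ and so depends on the parent's value. Iterating this along a path could a priori blow up the encoding lengths. To avoid this, we do not bound lengths through the recursion. Instead we argue as in Lemma~\ref{lem:activeset} and Lemma~\ref{lem:bits}. Let $S$ be the set of vertices whose chosen value lies in $(0,1)$; these are exactly the vertices picked at vertices of strictly convex arcs.

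On each connected component $C$ of $T_S$, the chosen values satisfy the stationarity conditions $\nabla_C f=0$, with neighbours outside $S$ fixed at $0$ or $1$. Each chosen value is interior and a stationary point of the strictly convex arc carrying it, so the chosen values form the unique solution of a linear system $2Q_C x_C=-\theta$. Here $\theta$ is built from $c$, $Q$, and binary neighbour values. If $Q_C$ is nonsingular, Cramer's rule (Corollary~3.2a in~\cite{SchBookIP}) bounds the lengths by a polynomial in $\L$.

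To make sure $Q_C$ is nonsingular, we may need to pick candidates consistently, for instance preferring the largest minimizer as in the proof of Lemma~\ref{lem:candidates}. Alternatively we can note that along $C$ every $\varphi_u$ is strictly convex at the chosen point, and that positive curvatures propagate through the Schur complements along the tree, which gives $Q_C\succ0$. This nonsingularity argument is the step I expect to require the most care.

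The total cost is $O(\sum_v K_v)$ operations.
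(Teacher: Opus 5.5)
\textbf{Verdict: genuine gap in the length bound.} Your algorithm and your correctness argument are the paper's: candidates from Lemma~\ref{lem:candidates} at the root, then the same selection at each child after replacing $b$ by $b+s_vx^\star_{p(v)}$. However, the length bound, which is the substantive part of the statement, is left open.

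You reduce it to nonsingularity of $Q_C$ on each component $C$ of $T_S$, and then you do not prove it. The inference ``each chosen value is interior and a stationary point of the strictly convex arc carrying it, so the chosen values form the unique solution'' is a non sequitur. Positive curvature of the univariate function $\varphi_u+s_ux^\star_{p(u)}x$ at $x^\star_u$ says nothing directly about $Q_C$. Your Schur-complement heuristic would need to know which arcs of the children's $\mu_j$ are active at the chosen points. Moreover, a candidate $-b_j/(2a_j)$ may sit at an endpoint of its arc, where the one-sided curvatures differ. The tie-breaking alternative is not developed at all.

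The step can be rescued. If $0\ne d\in\ker Q_S$, then $f$ is constant along $x^\star+\epsilon d$. Take $u$ of minimum depth in the support of $d$. Optimality of $x^\star$, and of $x^\star_{T_u}$ for $F_u(\cdot\,;x^\star_{p(u)})$, forces $F_u$ to be constant along $x^\star_{T_u}+\epsilon d_{T_u}$. Hence $\varphi_u+s_ux^\star_{p(u)}x$ is constant near $x^\star_u\in(0,1)$, so $x^\star_u$ lies inside an affine arc and is not a candidate. As written, though, the proof is incomplete.

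More to the point, the detour rests on a mistaken premise. Iterating $t\mapsto-(b+s_vt)/(2a)$ down a path cannot blow up lengths. It is an affine map whose coefficients have numerators and denominators of bit length at most some $\kappa$ polynomial in $\L$ (Lemma~\ref{lem:bits}). So for $t=\nu/\delta$ the image is a quotient of integers bounded in absolute value by $2^{O(\kappa)}\max\{|\nu|,\delta\}$ and $2^{O(\kappa)}\delta$. The bit length therefore grows only additively, by $O(\kappa)$ per level, and resets whenever the candidate $0$ or $1$ is chosen. Since every depth is at most $n-1$, each $x^\star_v$ has bit length $O(n\kappa)$; this is exactly the paper's argument.

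The doubling that made Lemma~\ref{lem:activeset} necessary concerns the arc coefficients of the $\mu_v$, which the conjugation formula squares at every level. It does not affect the backtracked values.
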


\begin{proof}
Consider first the root problem $\min_{x\in[0,1]}\varphi_r(x)$. By Lemma~\ref{lem:candidates}, applied to $\varphi_r|_{[0,1]}\in\mathcal Q$, the minimum is attained at one of the at most $K_r+2$ points listed there, which we call candidates. Each candidate is $0$, $1$, or is obtained from the coefficients of one arc by $O(1)$ operations, and whether it belongs to its arc is decided by two comparisons against the endpoints of the arc, which are given by representations. Evaluating $\varphi_r$ at the admissible candidates and comparing the values exactly selects $x_r^\star$.

Now descend. Given the value already chosen for the parent, $t^\star:=x^\star_{p(v)}$, we choose $x^\star_v$ as a minimizer of $\varphi_v(x)+s_vt^\star x$ over $[0,1]$. Since adding an affine function changes no one-sided derivative jump, $x\mapsto\varphi_v(x)+s_vt^\star x$ lies in $\mathcal Q$, with the same arcs as $\varphi_v|_{[0,1]}$ and with $b$ replaced by $b+s_vt^\star$ on each arc, so Lemma~\ref{lem:candidates} applies to it, and $x^\star_v$ is selected as at the root. The total operation count is $O\big(\sum_vK_v\big)$.

The resulting $x^\star$ is optimal. Indeed, by induction on the height of $v\ne r$, the point $x^\star_{T_v}$ minimizes $F_v(\cdot\,;x^\star_{p(v)})$ over $[0,1]^{T_v}$: by \eqref{eq:Fsplit}, the inductive hypothesis and Lemma~\ref{lem:dp}, $F_v(x^\star_{T_v};x^\star_{p(v)})=\varphi_v(x^\star_v)+s_vx^\star_{p(v)}x^\star_v$, which equals $\mu_v(x^\star_{p(v)})$ by the choice of $x^\star_v$ and \eqref{eq:message}. At the root, the same computation with $s_r=0$ gives $f(x^\star)=F_r(x^\star;0)=\varphi_r(x^\star_r)=\min_{x\in[0,1]}\varphi_r(x)$, which is the optimal value by Lemma~\ref{lem:dp}.

All candidates are rational. To bound their lengths, write $h(\alpha):=\max\{\lceil\log_2(|\nu|+1)\rceil,\lceil\log_2(\delta+1)\rceil\}$ for a rational $\alpha=\nu/\delta$ in lowest terms, so that $\langle\alpha\rangle\le 2h(\alpha)+2$. By Lemma~\ref{lem:bits}, the arc coefficients have $h$ bounded by some $\kappa$ polynomial in $\L$, and we may take $\kappa$ so large that $h(s_v)\le\kappa$ for all $v$. At the root, every candidate therefore has $h=O(\kappa)$. At a child, writing $t^\star=\nu/\delta$, the candidate $-(b+s_vt^\star)/(2a)$ is a quotient of integers whose absolute values are at most $2^{O(\kappa)}\max\{|\nu|,\delta\}$ and $2^{O(\kappa)}\delta$, so its $h$ exceeds $h(t^\star)$ by $O(\kappa)$. Hence $h(x^\star_v)=O(n\kappa)$ for every $v$, and $\langle x^\star\rangle$ is polynomial in $\L$, and so are the values compared when selecting each $x^\star_v$, which are obtained from the candidates, $t^\star$ and the arc coefficients by $O(1)$ operations.
\end{proof}

We are now ready to prove Theorem~\ref{thm:main}.

\begin{proof}[Proof of Theorem~\ref{thm:main}]
We first bound the number of operations. Process vertices leaf-to-root, and let $d_v:=|\ch(v)|$ denote the number of children of $v$. Let $m_v$ denote the total number of breakpoints of the $\mu_j$, $j\in\ch(v)$; by Proposition~\ref{prop:master}, $m_v\le\sum_{j\in\ch(v)}2|T_j|=2(|T_v|-1)$. By Lemma~\ref{lem:compute} the arcs of each child value function $\mu_j$ are produced in the order of their breakpoints, up to a global reversal according to the sign of $s_j$, so that the $d_v$ breakpoint lists are available in increasing order, after reversing some of them, at a cost of $O(m_v)$. Forming $\varphi_v$ amounts to merging them, discarding the breakpoints outside $(0,1)$; each comparison of two breakpoints costs $O(1)$ operations by Lemma~\ref{lem:compare}. We merge by a single scan that repeatedly selects the smallest among the $d_v$ current heads, at a cost of $O(d_vm_v)$ comparisons, while maintaining the sum of the arc coefficients incrementally: crossing a breakpoint of a child $\mu_j$ updates the running sum by subtracting the coefficients of the arc of $\mu_j$ that ends there and adding those of the arc that begins, at a cost of $O(1)$ operations, the initial sum costing $O(d_v)$. Several children may have a breakpoint at the same point, in which case all their updates are performed there; and consecutive intervals of the merged list carrying the same polynomial are coalesced, so that the arcs of $\varphi_v$ are maximal. The merge thus costs $O(d_vm_v+d_v)$ operations and comparisons, including the $O(m_v)$ for reversing. For $v\ne r$, computing $\mu_v$ then costs $O(K_v)$ operations by Lemma~\ref{lem:compute} applied to $(\varphi_v|_{[0,1]},s_v)$; its hypotheses hold because $\varphi_v|_{[0,1]}\in\mathcal Q$ (Proposition~\ref{prop:master}), its arcs have rational coefficients and its breakpoints are breakpoints of the $\mu_j$ (Lemma~\ref{lem:rational}), whose representations are provided by Lemma~\ref{lem:compute}, and $s_v=2q_{p(v)v}\in\Q\setminus\{0\}$. By Proposition~\ref{prop:master}, $K_v\le 2|T_v|-1$, and
\[
\sum_{v\in V}|T_v|=\sum_{u\in V}\big(\mathrm{depth}(u)+1\big)\le n^2 ,
\]
because $\mathrm{depth}(u)\le n-1$ for every $u$; hence $\sum_{v\in V}K_v=O(n^2)$. The merging is equally cheap: since $m_v\le 2(|T_v|-1)\le 2n$ and $\sum_{v\in V}d_v=n-1$, we have $\sum_{v\in V}(d_vm_v+d_v)\le(2n+1)(n-1)=O(n^2)$. Adding the $O(\sum_vK_v)$ operations of the backtracking of Lemma~\ref{lem:backtrack}, the total is $O(n^2)$ arithmetic operations and comparisons. Lemma~\ref{lem:backtrack} then returns an optimal solution.

We now bound the lengths of the numbers involved. The following rationals have length polynomial in $\L$: the entries of $Q$ and $c$; the arc coefficients of the $\varphi_v$ and $\mu_v$, the coefficients of their defining quadratics and the rationals of the representations of their breakpoints, by Lemma~\ref{lem:bits}; the partial sums of at most $n$ arc coefficients formed during a merge, as sums of at most $n$ such rationals; and the coordinates of $x^\star$ and the values compared when selecting them, by Lemma~\ref{lem:backtrack}. Every other number computed by the algorithm is obtained from these by $O(1)$ arithmetic operations: this holds for the rationals representing the auxiliary nested radicals of Lemma~\ref{lem:envelope}, since every endpoint there is computed from arc coefficients and breakpoints alone, and for the rationals computed within a sign determination of Lemma~\ref{lem:compare}. Hence every number computed has length polynomial in $\L$.

Finally, the operation count depends on $n$ alone and every intermediate number has length polynomial in the input length, so the algorithm is strongly polynomial.
\end{proof}

\subsection{Relation to the literature}
\label{sec:relation}

Kolmogorov, Pock and Rolinek~\cite{KolPocRol16} consider problems over a tree of the form
\[
\min\Big\{\sum_{v\in V}g_v(x_v)+\sum_{v\ne r}h_v\big(x_v-x_{p(v)}\big)\ :\ x\in\R^V\Big\},
\]
and solve them by dynamic programming when the terms $h_v$ are (truncated) total variation terms; Kuric, Ahmetspahic and Pock~\cite{KurAhmPoc24} extend this type of algorithm to piecewise quadratic $g_v$ and $h_v$, motivated by total generalized variation models. Up to the fact that in~\cite{KurAhmPoc24} these terms are finite on $\R$, Problem~\eqref{eq:QP} is of this form when its interaction graph is a tree: by the identity $s_vx_{p(v)}x_v=\tfrac12s_v(x_{p(v)}^2+x_v^2)-\tfrac12s_v(x_v-x_{p(v)})^2$, one can take $h_v(d)=-\tfrac12s_vd^2$, and $g_v(x)=\big(q_{vv}+\sum_{u:\{u,v\}\in E}q_{uv}\big)x^2+c_vx$ on $[0,1]$ and $g_v(x)=+\infty$ elsewhere. The message sent from $v$ to $p(v)$ by the dynamic program of~\cite{KurAhmPoc24} is then $\mu_v(t)-q_{p(v)v}t^2$. For nonconvex terms, the algorithm of~\cite{KurAhmPoc24} has exponential worst-case time and memory complexity, whereas, for the terms arising from Problem~\eqref{eq:QP}, the value functions have linearly many arcs (Proposition~\ref{prop:master}), and Theorem~\ref{thm:main} gives a strongly polynomial algorithm performing $O(n^2)$ operations.

A second, more closely related, line of work concerns \emph{convex quadratic optimization with indicator variables,} that is, the problem 
\[
\min\Big\{\tfrac12x^\top Qx+c^\top x+\lambda^\top z \ :\ x_i(1-z_i)=0 \text{ for } i\in[n],\ x\in\R^n,\ z\in\{0,1\}^n\Big\},
\]
where $Q\succ0$ and $\lambda>0$.
When the interaction graph of $Q$ is a tree, Bhathena, Fattahi, G\'omez and K\"u\c{c}\"ukyavuz~\cite{BhaFatGomKuc26} give an $O(n^2)$ algorithm based on a parametric analysis of the value functions of a dynamic program over the tree.
For positive definite $Q$ whose interaction graph has bounded treewidth, the same authors give a parametric dynamic program over a tree decomposition~\cite{BhaFatGomKuc26b}, whose complexity depends on structural and numerical parameters, including treewidth, volume growth, conditioning and a margin parameter, and is linear in $n$ when the relevant parameters are bounded appropriately; Problem~\eqref{eq:QP}, instead, is strongly $\NP$-hard already when the interaction graph has treewidth two (Theorem~\ref{thm:strong-NP-hardness-tw2}).
The technique of~\cite{BhaFatGomKuc26} is closely related to ours.

\paragraph{Similarities.} Both algorithms process the tree from the leaves to the root. Their \emph{parametric cost} at $v$, the optimal value of the subproblem on the subtree rooted at $v$ as a function of $x_v$, plays the role of our $\varphi_v$, and their recursion is the analogue of \eqref{eq:message}: for $v\ne r$, minimizing the parametric cost at $v$ plus $q_{p(v)v}\,x_{p(v)}x_v$ over $x_v$ yields a function of $x_{p(v)}$, expressed in~\cite{BhaFatGomKuc26} through a Fenchel conjugate, which corresponds to our value function $\mu_v$. Both analyses isolate a class of piecewise quadratic functions preserved by the recursion, their \emph{consistent} functions, up to an indicator term, and our class $\mathcal Q$, and share three ingredients: conjugation adds at most two arcs (\cite[Proposition~1]{BhaFatGomKuc26} and Lemma~\ref{lem:conjugate}); the conjugate is computed by a linear-time sweep (\cite[Proposition~2, Theorem~3]{BhaFatGomKuc26} and Lemmas~\ref{lem:envelope} and~\ref{lem:compute}); and every arc has a closed form (\cite[Lemma~1]{BhaFatGomKuc26} and Lemma~\ref{lem:activeset}). In both, the number of arcs adds at vertices with several children, which yields $O(n^2)$ operations.

\paragraph{Differences.} The two works differ in the problem solved, in the role of convexity, and in the model of computation; the last two are where the main difficulties of our analysis lie.
\medskip
\begin{enumerate}[leftmargin=0.8cm]
\item [$(i)$] \emph{Problem structure.} Neither problem is a special case of the other. The problem of~\cite{BhaFatGomKuc26} is mixed-integer, with unbounded continuous variables whose support is chosen by the binary variables and penalized by $\lambda^\top z$; Problem~\eqref{eq:QP} is continuous, with variables confined to a box. Accordingly, the coordinates that are not free in Lemma~\ref{lem:activeset} sit at either endpoint of their range, whereas in~\cite{BhaFatGomKuc26} they are all equal to zero.

\item [$(ii)$] \emph{Nonconvexity.} In~\cite{BhaFatGomKuc26}, $Q\succ0$, so every arc of every parametric cost, up to the indicator term, is a strongly convex quadratic~\cite[Lemma~1]{BhaFatGomKuc26}, and this is why consistency is preserved along the recursion. We make no assumption on $Q$: an arc of $\varphi_v$ may be strictly convex, affine or strictly concave, and consistency fails. What survives is the weaker property that every kink is concave (Proposition~\ref{prop:master}), which defines $\mathcal Q$ and holds for the restriction to $[0,1]$ of every consistent function. Contact must then be excluded from the interior of strictly concave arcs (Lemma~\ref{lem:noconcave}), and the sweep must replace every strictly concave arc by its chord (Lemma~\ref{lem:chord}); both steps are vacuous in the convex setting. The count of at most one arc of $\mu_v$ per arc of $\varphi_v$ (Lemmas~\ref{lem:noconcave} and~\ref{lem:onecontact}) is the analogue of the monotonicity of their indexing function~\cite[Lemma~5]{BhaFatGomKuc26}, and holds here for arcs of any curvature. Finally, the positive definiteness of the free block becomes a conclusion instead of a hypothesis: in Lemma~\ref{lem:activeset}, $Q_S\succ0$ follows from Lemma~\ref{lem:pdface}, which makes the length bounds of Lemma~\ref{lem:bits} available.

\item [$(iii)$] \emph{Model of computation.} The bound $O(n^2)$ of~\cite{BhaFatGomKuc26} counts arithmetic operations on real numbers and does not estimate the lengths of the numbers; already the normalization $q_{ii}=1$ made there, by $x_i\leftarrow x_i/\sqrt{q_{ii}}$, takes the data out of $\Q$. We work in the Turing model. In both settings the breakpoints are irrational already for rational data, for instance slopes of common tangents to two parabolas; we compute and compare them exactly, with $O(1)$ operations on rational numbers and no root extraction (Lemma~\ref{lem:compare}), whereas~\cite[Section~4]{BhaFatGomKuc26} reports that in floating-point arithmetic their computation becomes unstable on large trees and gives a correction step. That the numbers produced along the recursion have polynomial length is not evident from the recursion itself; we obtain it from the closed form of Lemma~\ref{lem:activeset}, which exhibits every arc as the value of an unconstrained quadratic minimization with a positive definite matrix (Lemma~\ref{lem:bits}). 

We believe that our techniques can be applied to prove that the algorithm of~\cite{BhaFatGomKuc26} is in fact strongly polynomial, once we remove the normalization $q_{ii}=1$. Namely, by~\cite[Lemma~1]{BhaFatGomKuc26}, every arc of their parametric costs is also the value of an unconstrained quadratic minimization with a positive definite matrix, so the argument of Lemma~\ref{lem:bits} bounds the lengths of its coefficients, and their breakpoints can be computed and compared exactly as in Lemma~\ref{lem:compare}.

\end{enumerate}

\subsection{Higher degree box-constrained polynomial optimization}
\label{sec:quartic}

Theorem~\ref{thm:main} shows that a box-constrained quadratic program can be solved in strongly polynomial-time when the interaction graph is a forest. It is therefore natural to ask whether a similar tractability result continues to hold for polynomials of higher degree under the same sparsity assumption. The next result shows that this is not the case: already for quartic objectives, the problem becomes strongly $\NP$-hard even when the interaction graph is a path. Thus, the quadratic structure of Problem~\eqref{eq:QP}, and not only the acyclicity of its interaction graph, is essential to our tractability result.

Let
$f(x)=\sum_{\alpha\in\mathbb Z_{\ge0}^n}c_\alpha x^\alpha$
be a polynomial, where \(x^\alpha:=\prod_{i=1}^n x_i^{\alpha_i}\). We define the \emph{interaction graph} of $f$ as the graph $G=(V,E)$ with vertex set $V=[n]$, containing one vertex for each variable $x_i$, and with
$\{i,j\} \in E$ for some $i\neq j \in [n]$ if and only if there exists $\alpha$ with $c_\alpha\ne0$ such that $\alpha_i>0$ and $\alpha_j>0$.
Namely, two distinct vertices are adjacent if and only if the corresponding variables appear together in a monomial with nonzero coefficient.

\begin{theorem}\label{thm:quartic-path}
Minimizing a polynomial of degree four with integer coefficients over the
unit hypercube is strongly $\NP$-hard even when its interaction graph is a
path. 
\end{theorem}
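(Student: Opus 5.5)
The plan is to reduce a strongly $\NP$-hard feasibility problem to deciding whether a quartic of the form
\[
f(x)=\sum_{i=1}^{m-1}p_i(x_i,x_{i+1})^2+\sum_{i=1}^m g_i(x_i)
\]
has minimum value $0$ over $[0,1]^m$. Here each $p_i$ is a quadratic in the two consecutive variables $x_i,x_{i+1}$, each $g_i$ is a univariate polynomial of degree at most four, and all coefficients are integers bounded by a constant (or by a polynomial in the instance size). Every monomial involves at most two consecutive variables, so the interaction graph is a subgraph of the path $1,2,\dots,m$.

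The design principle is that constraints of the shape $p_i(x_i,x_{i+1})=0$ chain together into a relation between $x_1$ and $x_m$ whose number of branches can grow exponentially. For this reason a single continuous variable, passed along the path, can carry an entire combinatorial certificate. This is exactly what is impossible for quadratics, where Theorem~\ref{thm:main} and the concave pw-quadratic value functions of Proposition~\ref{prop:master} rule it out. Concretely, I would use the Chebyshev doubling relation $x_{i+1}=2x_i^2-1$, rescaled affinely to $[0,1]$, which is integral after clearing a bounded denominator. If $x_1=\cos\theta$, the chain gives $x_{k}=\cos(2^{k-1}\theta)$, so the sign of $x_k$ reads off a bit of the binary expansion of $\theta/\pi$.

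The key steps, in order, are as follows.
\begin{itemize}[leftmargin=0.8cm]
\item[$(i)$] Fix the source problem: 3-SAT (or Max-2-SAT), whose numbers are trivially unary, so that any polynomial-size reduction with bounded coefficients yields strong $\NP$-hardness.
\item[$(ii)$] Encode a truth assignment as the angle $\theta$, and read bits with the doubling chain. Use bounded-coefficient penalty terms $g_i$ that force each read-out variable close to $\pm1$, so that $x_k$ encodes a Boolean value.
\item[$(iii)$] Enforce each clause by a penalty that vanishes if and only if one of its literals is true.
\item[$(iv)$] Prove that the minimum is $0$ if the formula is satisfiable and at least an explicit positive gap otherwise. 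The gap should have polynomial length, so that the decision version is hard.
\end{itemize}

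The main obstacle is step $(iii)$. A clause refers to three bits at arbitrary positions, but along a single path the doubling map discards information as it moves forward. Hence the bits of a clause are not simultaneously available at one vertex. My first attempt would be to make the path long enough to contain, for every clause, a fresh segment that re-derives the needed bits from a copy of the state. The copies would be linked by $(x_i-x_{i+1})^2$ terms. Each segment would carry a small auxiliary accumulator that is threaded through consecutive variables via quadratic relations; for example, a product of three $(1\pm x)/2$ factors can be built one factor per edge, using terms like $(x_{i+1}-x_i y)^2$ in which $y$ is itself the neighbouring variable. The difficulty is that the state and the accumulator then compete for the single real value transmitted across each edge.

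If this fails, I would replace doubling by an invertible encoding. The candidate is to carry a pair $(\cos\theta,\sin\theta)$ in alternating path positions, or to use a polynomial rotation-like map of degree two. This would let the state be restored after each read so that every clause can be checked in turn. The gap analysis of step $(iv)$ would then follow from a compactness/Łojasiewicz-type argument made quantitative with the bounded integer data.
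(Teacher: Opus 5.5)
\textbf{There is a genuine gap: the proposal does not yet contain a reduction, and steps $(ii)$ and $(iii)$ fail as stated.} Your template $f=\sum_i p_i(x_i,x_{i+1})^2+\sum_i g_i(x_i)$ is the right one.

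\emph{Step $(ii)$.} The doubling chain is a deterministic forward map. Once one read-out variable is confined to two values, every later variable takes at most two values, determined by that single bit. Concretely, $x_k=\pm1$ gives $x_{k+1}=2x_k^2-1=1$, and then all later variables equal $1$. So penalties $g_i$ that vanish only at $\pm1$ on several read-out variables leave the angle $\theta$ with at most one free bit. You also cannot mean ``close to'' literally if $\min f=0$ is to certify satisfiability: a nonzero polynomial has finitely many roots, so $g_i$ cannot vanish on a neighbourhood of $\pm1$.

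\emph{Step $(iii)$.} You rightly flag this as the crux, but it is left open. Every term couples only consecutive variables, so all information about the assignment must pass through one scalar per vertex. The doubling map is not invertible, and a path has no second channel on which a ``copy of the state'' could travel past a read-out segment. The remedies you suggest (threaded accumulators, alternating $(\cos\theta,\sin\theta)$, a rotation-like map) are only named. It is not shown that any of them restores the state using relations between two neighbours only.

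\emph{Step $(iv)$.} This step is unnecessary, and its length claim is doubtful. Since $f\ge0$ and the box is compact, $\min f\le 0$ holds if and only if $f$ has a zero in the box, so the exact threshold $\gamma=0$ suffices. Meanwhile, iterated squaring can make a positive minimum doubly exponentially small. For example, $(2x_1-1)^2+\sum_{i<m}(x_{i+1}-x_i^2)^2+x_m^2$ has a positive minimum of at most $2^{-2^m}$.

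\textbf{The missing idea.} Reduce from a strongly $\NP$-hard \emph{number} problem whose certificate accumulates multiplicatively in a single scalar, so that nothing ever has to be read back. The paper uses \textsc{Product Partition}. It is strongly $\NP$-complete, so the $a_i$ may be assumed polynomially bounded.
\begin{itemize}
\item With $x_0:=1$, the terms $(a_ix_i-x_{i-1})^2$, $i\in[n]$, force $x_n=1/P$ with $P:=\prod_i a_i$. This keeps the rest of the chain inside $[0,1]$.
\item The terms $(x_{n+i}-x_{n+i-1})^2(x_{n+i}-a_i^2x_{n+i-1})^2$ are your $p_i^2$, with $p_i$ the product of two linear branches. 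Each step either keeps the current value or multiplies it by $a_i^2$.
\item The term $(x_{2n}-1)^2$ forces $\prod_{i\in S}a_i^2=P$, which is equivalent to $\prod_{i\in S}a_i=\prod_{i\notin S}a_i$.
\end{itemize}
The interaction graph is the path $x_1-\cdots-x_{2n}$, every coefficient is at most $6a_{\max}^4$ in absolute value, and the threshold is $\gamma=0$.
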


\begin{proof}
Consider the decision version of the problem: given a degree four polynomial $f(x)$ with integer coefficients and $\gamma \in \Q$, decide whether 
$$
\min\{f(x): x \in [0,1]^n\} \leq \gamma.
$$
To prove the statement, it suffices to show that this decision problem is strongly $\NP$-hard.
We give a polynomial-time reduction from the strongly $\NP$-complete \textsc{Product Partition} problem~\cite{NgBarCheKov10}.
Given positive integers $a_1,\dots,a_n$, the \textsc{Product Partition}
problem asks whether there exists a subset $S\subseteq[n]$ such that
\begin{equation}\label{eq:product-partition}
    \prod_{i\in S} a_i
    =
    \prod_{i\in[n]\setminus S} a_i.
\end{equation}
Define $P:=\prod_{i=1}^n a_i$, introduce the variables
$x_1,\dots,x_{2n}\in[0,1]$, 
and let $x_0:=1$.
Consider the polynomial
\begin{align}
f(x):=
&\sum_{i=1}^{n}
    \left(a_i x_i-x_{i-1}\right)^2
\label{eq:quartic-objective-a}\\
&+
\sum_{i=1}^{n}
    (x_{n+i}-x_{n+i-1})^2
    (x_{n+i}-a_i^2x_{n+i-1})^2
\label{eq:quartic-objective-b}\\
&+
(x_{2n}-1)^2.
\label{eq:quartic-objective-c}
\end{align}
Clearly, $f(x)$ is a polynomial of degree four with integer coefficients.
We first prove that the minimum of $f(x)$ over $[0,1]^{2n}$ is zero
if and only if the given \textsc{Product Partition} instance is feasible.
First, suppose that $x\in[0,1]^{2n}$ satisfies $f(x)=0$.
Since every term in~\eqref{eq:quartic-objective-a}--%
\eqref{eq:quartic-objective-c} is nonnegative, every squared term must
vanish. From~\eqref{eq:quartic-objective-a} we obtain
$a_i x_i=x_{i-1}$ for all $i\in[n]$.
Using $x_0=1$ recursively gives
\begin{equation}\label{eq:first-chain}
    x_i=\frac{1}{\prod_{j=1}^{i}a_j},
    \quad \forall i\in[n].
\end{equation}
and in particular
\begin{equation}\label{eq:yn}
    x_n=\frac1P.
\end{equation}
Next, from~\eqref{eq:quartic-objective-b}, for every $i\in[n]$ we obtain
$$
    (x_{n+i}-x_{n+i-1})
    (x_{n+i}-a_i^2x_{n+i-1})=0.
$$
Hence, for each $i\in[n]$, either
$x_{n+i}=x_{n+i-1}$ or $x_{n+i}=a_i^2x_{n+i-1}$.
Let $S\subseteq[n]$ be the set of indices for which $x_{n+i}=a_i^2x_{n+i-1}$. 
Using~\eqref{eq:yn}, we obtain
\begin{equation}\label{eq:second-chain}
    x_{2n}
    =\frac1P\prod_{i\in S}a_i^2.
\end{equation}
Moreover, from~\eqref{eq:quartic-objective-c} we obtain $x_{2n}=1$, implying that
\[
    \prod_{i\in S}a_i^2=P
    =\prod_{i\in [n]}a_i=
    \Big(\prod_{i\in S}a_i\Big)
    \Big(\prod_{i\in[n]\setminus S}a_i\Big).
\]
Since all $a_i$s are positive, we deduce that
$\prod_{i\in S}a_i
    =  \prod_{i\in[n]\setminus S}a_i$.
Thus $S$ solves the \textsc{Product Partition} instance.

Conversely, suppose that $S\subseteq[n]$ satisfies
\eqref{eq:product-partition}. Define
\begin{equation}\label{eq:y-first}
    x_i:=
    \frac{1}{\prod_{j=1}^{i}a_j},
    \quad \forall i\in[n].
\end{equation}
Since $a_j \geq 1$ for all $j \in [n]$, we have $x_i \in (0,1]$ for all $i \in [n]$.
For each $i\in[n]$, define
\begin{equation}\label{eq:y-second}
    x_{n+i}
    :=
    \frac1P
    \prod_{\substack{j\in S\\j\le i}}a_j^2.
\end{equation}
Since $a_j \geq 1$, we have $\prod_{j\in S, \; j \leq i}a_j^2 \leq \prod_{j\in S}a_j^2$. From~\eqref{eq:product-partition} it follows that $\prod_{j\in S}a_j^2=P$.  
Therefore, $0<x_{n+i}\le1$, for all $i\in[n]$. We then have $x\in[0,1]^{2n}$.
From~\eqref{eq:y-first} it follows that
$$
    a_i x_i-x_{i-1}=0
    \quad\forall i\in[n],
$$
implying that every term in~\eqref{eq:quartic-objective-a} vanishes.
Furthermore, from~\eqref{eq:y-second} it follows that
\[
    x_{n+i}
    =
    \begin{cases}
      a_i^2x_{n+i-1}, & i\in S,\\
      x_{n+i-1}, & i\notin S,
    \end{cases} \quad \forall i \in [n].
\]
Therefore, every term in~\eqref{eq:quartic-objective-b} also vanishes.
Finally, by construction we have $x_{2n}=1$, so~\eqref{eq:quartic-objective-c} vanishes as well.
Therefore $f(x)=0$.

We next examine the interaction graph of $f(x)$. Expanding the objective function gives
\begin{align*}
f(x)
=&
\sum_{i=1}^{n}
\big(
a_i^2x_i^2
-2a_i x_{i-1}x_i
+x_{i-1}^2
\big)\\
+&\sum_{i=1}^{n}
\big(
x_{n+i}^4
-2(a_i^2+1)x_{n+i-1}x_{n+i}^3
+(a_i^4+4a_i^2+1)x_{n+i-1}^2x_{n+i}^2
-2a_i^2(a_i^2+1)x_{n+i-1}^3x_{n+i}
+a_i^4x_{n+i-1}^4
\big)\\
&+x_{2n}^2-2x_{2n}+1,
\end{align*}
where $x_0:=1$.
It is then simple to see that the interaction graph of $f(x)$ is the path $x_1-x_2-\cdots-x_{2n}$.

Finally, we verify strong $\NP$-hardness. The polynomial $f(x)$ has
$2n$ variables and $O(n)$ monomials after expansion. Let
$a_{\max}:=\max_{i\in[n]}a_i$.
It follows from the above expansion that every coefficient of $f(x)$
has absolute value at most $6a_{\max}^4$. Since
\textsc{Product Partition} is strongly $\NP$-complete
\cite{NgBarCheKov10}, we may restrict attention to instances for which
$a_{\max}$ is polynomially bounded in $n$. Hence all numerical
coefficients of the constructed instance are polynomially bounded in
its number of variables. The box bounds are $0$ and $1$, and we set
the decision threshold to $\gamma:=0$. The construction is clearly
polynomial in the encoding length of the \textsc{Product Partition}
instance. Therefore, the decision problem is strongly $\NP$-hard.
\end{proof}

\section{Treewidth two: Strong $\NP$-hardness}
\label{sec: NP-hard}

In this section, we show that Problem~\eqref{eq:QP} is strongly $\NP$-hard even when the interaction graph has treewidth two. This result in turn implies that Theorem~\ref{thm:main} is sharp with respect to the treewidth.
Let us first give a formal definition of treewidth.
Given a graph $G = (V, E)$, a \emph{tree decomposition} of $G$ is a pair $(\X, T)$, where $\X = \{X_1, \dots, X_m\}$ is a family of subsets of $V$, called \emph{bags}, and $T$ is a tree with $m$ vertices, the $i$-th vertex being associated with the bag $X_i$, such that:

\medskip

\begin{enumerate}
    \item $V = \bigcup_{i \in [m]}{X_i}$.
    \item For every edge $\{u,v\} \in E$, there is a bag $X_i$, $i \in [m]$, with $u,v \in X_i$.
    \item For each vertex $v \in V$, the set of all bags containing $v$ induces a connected subtree of $T$.
\end{enumerate}
\medskip
The \emph{width} of a tree decomposition $(\X,T)$ is the size of its largest bag $X_i$ minus one. The \emph{treewidth} $\tw(G)$ of a graph $G$ is the minimum width among all possible tree decompositions of $G$.  

\begin{figure}[t]
    \centering
    \scalebox{0.85}{\begin{tikzpicture}[
        vplus/.style={circle,draw,fill=black,minimum size=5.5pt,inner sep=0pt},
        nbr/.style={circle,draw,fill=black!30,minimum size=5.5pt,inner sep=0pt},
        rest/.style={circle,draw,fill=white,minimum size=5.5pt,inner sep=0pt},
        blk/.style={draw,rounded corners=2pt,fill=white,minimum width=1.4cm,minimum height=0.75cm,inner sep=2pt},
        every label/.style={inner sep=1.5pt,font=\small}
    ]
        \node[vplus,label=below:{$s_1$}]     (s1) at (0,0) {};
        \node[vplus,label=below:{$s_2$}]     (s2) at (2.2,0) {};
        \node[vplus,label=below:{$s_{n-1}$}] (sm) at (5.0,0) {};
        \node[vplus,label=below:{$s_n$}]     (sn) at (7.2,0) {};
        \draw (s1) -- (s2);
        \draw (s2) -- (2.95,0);  \node at (3.6,0) {$\cdots$};  \draw (4.25,0) -- (sm);
        \draw (sm) -- (sn);
        \node[vplus,label=left:{$z_{1\ell}$}] (z1) at (-0.70,1.20) {};
        \node[vplus,label=left:{$z_{2\ell}$}] (z2) at (1.10,1.20) {};
        \node[vplus,label=left:{$z_{n\ell}$}] (zn) at (6.10,1.20) {};
        \draw (z1) -- (s1);
        \draw (z2) -- (s1);  \draw (z2) -- (s2);
        \draw (zn) -- (sm);  \draw (zn) -- (sn);
        \node[blk] (B1) at (-0.70,2.55) {$B_1$};
        \node[blk] (B2) at (1.10,2.55) {$B_2$};
        \node[blk] (Bn) at (6.10,2.55) {$B_n$};
        \draw (z1) -- ($(B1.south)+(-0.30,0)$);  \draw (z1) -- ($(B1.south)+(0.30,0)$);
        \draw (z2) -- ($(B2.south)+(-0.30,0)$);  \draw (z2) -- ($(B2.south)+(0.30,0)$);
        \draw (zn) -- ($(Bn.south)+(-0.30,0)$);  \draw (zn) -- ($(Bn.south)+(0.30,0)$);
        \node at (3.6,1.9) {$\cdots$};
        \node[vplus,label=below:{$w_\ell$}]     (wl) at (8.3,0) {};
        \node[vplus,label=below:{$w_{\ell-1}$}] (wm) at (9.5,0) {};
        \node[vplus,label=below:{$w_1$}]        (w1) at (11.7,0) {};
        \draw (sn) -- (wl) -- (wm);
        \draw (wm) -- (10.25,0);  \node at (10.6,0) {$\cdots$};  \draw (10.95,0) -- (w1);
        \node[anchor=north] at (5.5,-0.95) {(a)};
    \end{tikzpicture}}

    \vspace{1.2em}

    \scalebox{0.85}{\begin{tikzpicture}[
        vplus/.style={circle,draw,fill=black,minimum size=5.5pt,inner sep=0pt},
        nbr/.style={circle,draw,fill=black!30,minimum size=5.5pt,inner sep=0pt},
        rest/.style={circle,draw,fill=white,minimum size=5.5pt,inner sep=0pt},
        every label/.style={inner sep=1.5pt,font=\small}
    ]
        \node[vplus,label=below:{$z_{i1}$}]          (z1) at (0,0) {};
        \node[vplus,label=below:{$z_{i2}$}]          (z2) at (1.4,0) {};
        \node[vplus,label=below:{$z_{i3}$}]          (z3) at (2.8,0) {};
        \node[vplus,label=below:{$z_{i,\ell-1}$}]    (zp) at (4.9,0) {};
        \node[vplus,label=above right:{$z_{i\ell}$}] (zl) at (6.3,0) {};
        \node[vplus,label=above:{$x_{i1}$}]          (x1) at (-0.7,1.3) {};
        \node[vplus,label=above:{$x_{i2}$}]          (x2) at (0.7,1.3) {};
        \node[vplus,label=above:{$x_{i3}$}]          (x3) at (2.1,1.3) {};
        \node[vplus,label=above:{$x_{i\ell}$}]       (xl) at (5.6,1.3) {};
        \draw (z1) -- (z2) -- (z3);
        \draw (z3) -- (3.45,0);  \node at (3.85,0) {$\cdots$};  \draw (4.25,0) -- (zp);
        \draw (zp) -- (zl);
        \draw (x1) -- (x2) -- (x3);
        \draw (x3) -- (2.75,1.3);  \node at (3.85,1.3) {$\cdots$};  \draw (4.95,1.3) -- (xl);
        \draw (x1) -- (z1);
        \draw (x2) -- (z1);  \draw (x2) -- (z2);
        \draw (x3) -- (z2);  \draw (x3) -- (z3);
        \draw (xl) -- (zp);  \draw (xl) -- (zl);
        \node[vplus,label=below:{$s_{i-1}$}] (sa) at (5.2,-1.2) {};
        \node[vplus,label=below:{$s_i$}]     (sb) at (7.4,-1.2) {};
        \draw[densely dashed] (zl) -- (sa);
        \draw[densely dashed] (zl) -- (sb);
        \draw[densely dashed] (sa) -- (sb);
        \node[anchor=north] at (3.5,-1.75) {(b)};
    \end{tikzpicture}}
    \caption{
    The interaction graph of the instance of Problem~\eqref{eq:QP}
    constructed in the proof of
    Theorem~\ref{thm:strong-NP-hardness-tw2}, drawn for $b_{ik}=1$ for
    all $i\in[n]$ and $k\in[\ell]$; if $b_{ik}=0$, the two edges of
    $x_{ik}$ incident to $z_{i,k-1}$ and $z_{ik}$ are absent.
    In~(a), the vertices $s_1,\dots,s_n$ form a path, the vertices
    $w_1,\dots,w_\ell$ form a path attached to $s_n$, and, for $i\geq2$,
    the vertex $z_{i\ell}$ forms a triangle with $s_{i-1}$ and $s_i$,
    while $z_{1\ell}$ is adjacent to $s_1$ alone. In either case
    $z_{i\ell}$ is the unique vertex through which the block $B_i$,
    depicted in~(b), is attached to the rest of the graph, and the two
    edges leaving $z_{i\ell}$ upwards lead to $z_{i,\ell-1}$ and
    $x_{i\ell}$. 
    }
    \label{fig:tw2-hardness}
\end{figure}

\medskip
We are now ready to present our $\NP$-hardness result.

\begin{theorem}\label{thm:strong-NP-hardness-tw2}
Problem~\eqref{eq:QP} is strongly $\NP$-hard even when the interaction graph has
treewidth two. Moreover, the result holds for instances satisfying
\[
    Q\in\mathbb{Z}^{n\times n},
    \quad
    c\in\mathbb{Z}^n,
    \quad
    \|Q\|_{\max}\leq 5,
    \quad
    \|c\|_{\infty}\leq 4,
\]
where $\|Q\|_{\max}:=\max_{p,q}|Q_{pq}|$ and $n$ is the number of variables of the instance. 
\end{theorem}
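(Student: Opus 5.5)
The plan is a polynomial-time reduction from \textsc{Partition}: given positive integers $a_1,\dots,a_n$, decide whether some $S\subseteq[n]$ has $\sum_{i\in S}a_i=\sum_{i\notin S}a_i$. \textsc{Partition} is only weakly $\NP$-hard, but we encode each $a_i$ by its binary digits $b_{i1},\dots,b_{i\ell}$, with $\ell=O(\log\max_i a_i)$, spread over $O(\ell)$ variables. Every coefficient of the resulting instance is then an absolute constant, and the instance size is polynomial in the bit length of the \textsc{Partition} instance. This gives strong $\NP$-hardness together with the bounds $\|Q\|_{\max}\le5$ and $\|c\|_\infty\le4$.

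The instance has the shape of Figure~\ref{fig:tw2-hardness}, with objective
\[
\sum(\text{squared affine residuals})+M\sum_{i,k}x_{ik}(1-x_{ik}),
\]
where $M$ is a small integer constant. The decision threshold is $\gamma=0$, where the minimum value $0$ is attained exactly at feasible codings of a partition.

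\emph{Selection variables.} Each $x_{ik}$ encodes the choice $i\in S$. The penalty $x_{ik}(1-x_{ik})$ forces $x_{ik}\in\{0,1\}$ at value $0$, and the residuals $(x_{ik}-x_{i,k+1})^2$ force all $x_{ik}$, $k\in[\ell]$, to be equal.

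\emph{Digit gadget.} Each block $B_i$ builds the chain
\[
z_{ik}=\tfrac12\bigl(z_{i,k-1}+b_{ik}x_{ik}\bigr),
\]
imposed through the residual $(2z_{ik}-z_{i,k-1}-b_{ik}x_{ik})^2$. Reading the digits from least to most significant makes $z_{i\ell}=x_{i1}\,a_i2^{-\ell}$. Halving at every step keeps every variable in $[0,1]$ while all coefficients stay bounded. When $b_{ik}=0$ the edges from $x_{ik}$ to $z_{i,k-1}$ and $z_{ik}$ disappear, as in the figure.

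\emph{Summation and final check.} The path $s_1,\dots,s_n$ accumulates a signed sum. A natural choice is
\[
s_i=\tfrac12\bigl(s_{i-1}+1\bigr)+\bigl(z_{i\ell}-\tfrac12a_i2^{-\ell}\bigr)\cdot(\text{a constant}),
\]
or, more simply, $s_i=s_{i-1}+2z_{i\ell}-a_i2^{-\ell}$ after a suitable rescaling. This choice must be arranged so that all coefficients are integers and $s_i$ stays in $[0,1]$. A possible fix is to offset by a constant and to divide by a fixed power of two using the $w$-path. That path $w_\ell,\dots,w_1$ doubles or halves step by step and finally compares with a fixed constant. At value $0$ this encodes $\sum_{i\in S}a_i-\sum_{i\notin S}a_i=0$.

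Soundness and completeness then follow as in Theorem~\ref{thm:quartic-path}. Every term is nonnegative on $[0,1]^n$ (the penalty terms included), so value $0$ forces every residual to vanish and every $x_{ik}$ to be binary, which yields a partition. Conversely, a partition gives an explicit point with all terms zero, provided all intermediate values lie in $[0,1]$.

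For treewidth two, I will exhibit an explicit decomposition of width two. Inside $B_i$, the $x$-path and $z$-path interleave as a ladder of triangles $\{x_{ik},z_{i,k-1},z_{ik}\}$ joined by the edges $\{x_{ik},x_{i,k+1}\}$, and these have a path-like decomposition with bags of size three. Each $B_i$ attaches to the rest only through $z_{i\ell}$, which lies in the triangle $\{z_{i\ell},s_{i-1},s_i\}$. The $s$- and $w$-paths are outerplanar pieces. Gluing the decompositions along single vertices or edges keeps the width at two. Since there are triangles, the treewidth is exactly two.

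The main obstacle is the arithmetic bookkeeping. All values must be kept in $[0,1]$, all coefficients of $Q$ and $c$ must be integers of absolute value at most $5$ and $4$ respectively after expanding the squares (the sum of several squares sharing a variable adds up diagonal entries), and the edge $\{z_{i\ell},s_{i-1}\}$ with its triangle must be the only coupling used. I will first fix the recursions as above with the factor-$2$ halving, expand, and tune the constant $M$ and the offsets until the bounds hold. If a diagonal entry exceeds the bound, I will split a long residual into two shorter ones via an auxiliary chain variable, at the cost of keeping the treewidth-two structure intact.
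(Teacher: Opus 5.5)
Your construction follows the paper's proof for the digit gadgets and the decomposition. That includes the halving chain $z_{ik}=\frac12(z_{i,k-1}+b_{ik}x_{ik})$, the binary-forcing and equalizing terms on the $x_{ik}$, and the width-two bags $\{z_{i,k-1},z_{ik},x_{ik}\}$, $\{z_{ik},x_{ik},x_{i,k+1}\}$, $\{s_{i-1},s_i,z_{i\ell}\}$. Reducing from \textsc{Partition} rather than \textsc{Subset Sum} (target $U/2$) is immaterial.

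The gap is the summation and final comparison. You leave it unresolved, and the formulas you do write break the very property the construction exists for. In a residual such as $(s_i-s_{i-1}-2z_{i\ell}+a_i2^{-\ell})^2$, the number $a_i2^{-\ell}$ is not an integer. Clearing the denominator produces coefficients of size about $2^\ell$ and $a_i$, which is exactly the weak encoding you are trying to avoid. Moreover, signed partial sums can be negative: already $s_1<0$ when $x_{11}=0$. Offsetting, rescaling by a power of two along the $w$-path, or comparing with a fixed constant does not fix this by itself. The data-dependent quantity $U/2$ (or the $a_i$ themselves, in the signed version) must still enter the objective somewhere with bounded coefficients. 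A $w$-path that doubles step by step helps only if each step also subtracts one binary digit of the target as a constant term, and that is precisely the missing gadget.

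The paper's fix has three parts:
\begin{itemize}
\item Accumulate unsigned, with residuals $(s_i-s_{i-1}-z_{i\ell})^2$ and $s_0:=0$, so that $s_i\le U/2^\ell<1$.
\item Encode the target $T:=U/2$ (if $U$ is odd the answer is trivially no) by a second halving chain $(2w_k-w_{k-1}-t_k)^2$ with $w_0:=0$, where the $t_k\in\{0,1\}$ are the binary digits of $T$. This forces $w_\ell=T/2^\ell$.
\item Close with $(s_n-w_\ell)^2$. The $w$-path attaches to $s_n$ by a single edge, so your decomposition extends with bags $\{w_{k-1},w_k\}$ and $\{s_n,w_\ell\}$.
\end{itemize}

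Two further details are needed. First, Problem~\eqref{eq:QP} has no constant term, so you must drop the constant $C:=\sum_k t_k$ produced by the $w$-residuals and use the threshold $\gamma=-C$, not $0$. Second, the coefficient bounds then follow by direct inspection rather than by tuning:
\begin{itemize}
\item The largest diagonal entries of $Q$ are $4+1=5$, at the $z_{ik}$ and the $w_k$.
\item Off-diagonal entries are at most $2$ in absolute value.
\item The linear coefficients are $-4t_k+2t_{k+1}\in[-4,2]$ at $w_k$ (with $t_{\ell+1}:=0$) and $M$ at $x_{ik}$, so any integer $1\le M\le 4$ is admissible. The paper penalizes only $x_{i1}$, with $M=1$.
\end{itemize}
No auxiliary splitting is required.
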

It is well-known that quadratic programming is in $\NP$~\cite{Vav90}. Therefore, Theorem~\ref{thm:strong-NP-hardness-tw2} implies that Problem~\eqref{eq:QP} is strongly $\NP$-complete, even if the interaction graph has treewidth two.

\begin{proof}
Consider the decision version of Problem~\eqref{eq:QP}: given
symmetric $Q\in\mathbb{Q}^{n\times n}$, $c\in\mathbb{Q}^n$, and
$\gamma\in\mathbb{Q}$, decide whether
$$
    \min\left\{x^\top Q x+c^\top x:x\in[0,1]^n\right\}
    \leq \gamma.
$$
To prove the statement, it suffices to show that this decision problem is strongly $\NP$-hard. 
We give a polynomial-time reduction from \textsc{Subset Sum}~\cite{GarJohBook}. Throughout this proof, $n$ denotes the number of items of the \textsc{Subset Sum} instance, and $N$ the number of variables of the constructed instance of Problem~\eqref{eq:QP}.
Given integers
$a_1,\dots,a_n \in \mathbb{Z}_{\ge 0}$ and a target $T \in \mathbb{Z}_{\ge 0}$, the \textsc{Subset Sum} asks
whether there exists $x \in \{0,1\}^n$ such that
\[
\sum_{i=1}^n a_i x_i = T.
\]
Define
\[
    U:=\sum_{i=1}^n a_i.
\]
The cases with $T=0$, $U=0$, and $T>U$ can be decided trivially. We therefore assume that $1\leq T\leq U$.
Define
\[
    \ell:=\left\lceil\log_2(U+1)\right\rceil
    \qquad\text{and}\qquad
    D:=2^\ell.
\]
In particular, we have $D>U$.
For each $i\in [n]$, write the binary expansion of $a_i$ as
\begin{equation}\label{aiexpand}
    a_i=\sum_{k=1}^{\ell}b_{ik}2^{k-1},
    \qquad b_{ik}\in\{0,1\}.
\end{equation}
Similarly, write the binary expansion of $T$ as
\begin{equation}\label{texpand}
    T=\sum_{k=1}^{\ell}t_k2^{k-1},
    \qquad t_k\in\{0,1\}.
\end{equation}
We construct an instance of Problem~\eqref{eq:QP} as follows.
Introduce the following variables:
\medskip
\begin{itemize}
    \item $s_i \in [0,1]$ for all $i \in [n]$,
    \item $w_k \in [0,1]$ for all $k \in [\ell]$,
    \item $x_{ik},z_{ik}\in[0,1]$ for all $i\in [n]$ and all $k\in[\ell]$.
\end{itemize}
\medskip
We further define 
$z_{i0}:=0$ for all $i \in [n]$, $s_0:=0$, and  $w_0:=0$.
Consider the following quadratic function:
\begin{align}
\Psi(x,z,s,w)
:={}&
\sum_{i=1}^n
\Big(
    x_{i1}(1-x_{i1})
    +\sum_{k=2}^{\ell}(x_{ik}-x_{i,k-1})^2
\Big)
\label{eq:strong-reduction-objective-a}\\
&+
\sum_{i=1}^n\sum_{k=1}^{\ell}
\left(
    2z_{ik}-z_{i,k-1}-b_{ik}x_{ik}
\right)^2
\label{eq:strong-reduction-objective-b}\\
&+
\sum_{i=1}^n
\left(
    s_i-s_{i-1}-z_{i\ell}
\right)^2
\label{eq:strong-reduction-objective-c}\\
&+
\sum_{k=1}^{\ell}
\left(
    2w_k-w_{k-1}-t_k
\right)^2
\label{eq:strong-reduction-objective-d}\\
&+
(s_n-w_\ell)^2.
\label{eq:strong-reduction-objective-e}
\end{align}
See Figure~\ref{fig:tw2-hardness} for the interaction graph of the resulting instance. Every term in~\eqref{eq:strong-reduction-objective-a}--%
\eqref{eq:strong-reduction-objective-e} is nonnegative over the unit box.
Consequently, $\Psi(x,z,s,w)\geq 0$
for every feasible point.
We first show that the minimum of $\Psi$ is zero if and only if the original
\textsc{Subset Sum} instance is feasible. Suppose that
$\Psi(x,z,s,w)=0$. Since all the terms in~\eqref{eq:strong-reduction-objective-a}--%
\eqref{eq:strong-reduction-objective-e} are nonnegative over the unit box, each of them vanishes.
The first term in~\eqref{eq:strong-reduction-objective-a} implies that
$x_{i1}\in\{0,1\}$ for all $i\in [n]$.
The remaining terms in~\eqref{eq:strong-reduction-objective-a} imply that
\[
    x_{ik}=x_{i,k-1}
    \quad
    \forall i\in [n], \;
    k\in\{2,\dots,\ell\}.
\]
Thus, for every $i$, there exists $\bar{x}_i\in\{0,1\}$ such that
\[
    x_{ik}=\bar{x}_i
    \quad
    \forall k\in [\ell].
\]
From~\eqref{eq:strong-reduction-objective-b}, we get
$z_{ik}=\frac{1}{2}(z_{i,k-1}+b_{ik}\bar{x}_i)$ for all $i \in [n]$ and all $ k \in [\ell]$,
which together with $z_{i0} = 0$ implies that
$$
    z_{ik}
    =
    \frac{\bar{x}_i}{2^k}
    \sum_{r=1}^k b_{ir}2^{r-1}
    \quad
    \forall i \in [n], \;
    k\in [\ell].
$$
In particular, from~\eqref{aiexpand} it follows that
\begin{equation}\label{eq:z-final-value}
    z_{i\ell}
    =
    \frac{a_i\bar{x}_i}{D}.
\end{equation}
Similarly, from
\eqref{eq:strong-reduction-objective-d} we get
$w_k=\frac{1}{2}(w_{k-1}+t_k)$ for all $k \in [\ell]$, which together with $w_0 = 0$ implies:
$$
    w_k
    =
    \frac{1}{2^k}
    \sum_{r=1}^kt_r2^{r-1}
    \quad \forall k\in [\ell].
$$
In particular, from~\eqref{texpand} it follows that
\begin{equation}\label{eq:w-final-value}
    w_\ell=\frac{T}{D}.
\end{equation}
From~\eqref{eq:strong-reduction-objective-c} we get
$s_i=s_{i-1}+z_{i\ell}$ for all  $i \in [n]$.
Using $s_0=0$ and~\eqref{eq:z-final-value}, we obtain
\begin{equation}\label{eq:s-final-value}
    s_n
    =
    \sum_{i=1}^n z_{i\ell}
    =
    \frac{1}{D}\sum_{i=1}^n a_i\bar{x}_i.
\end{equation}
Finally, from~\eqref{eq:strong-reduction-objective-e} we deduce that 
\begin{equation}\label{sn}
s_n=w_\ell.
\end{equation}
Combining
\eqref{eq:w-final-value},~\eqref{eq:s-final-value}, and~\eqref{sn} gives
$\sum_{i=1}^n a_i\bar{x}_i=T$.
Hence $\bar{x}\in\{0,1\}^n$ is a solution of the 
\textsc{Subset Sum} instance.

Conversely, suppose that $\bar{x}\in\{0,1\}^n$ satisfies
$\sum_{i=1}^n a_i\bar{x}_i=T$.
Set
$x_{ik}:=\bar{x}_i$ for all $i \in [n]$ and all $k\in [\ell]$,
and define
\begin{align*}
   & z_{ik}
    :=
    \frac{\bar{x}_i}{2^k}
    \sum_{r=1}^k b_{ir}2^{r-1} \quad \forall i \in [n], k \in [\ell]
\\
   & w_k
    :=
    \frac{1}{2^k}
    \sum_{r=1}^kt_r2^{r-1} \quad \forall k \in [\ell]
\\
   & s_i
    :=
    \frac{1}{D}\sum_{j=1}^i a_j\bar{x}_j \quad \forall i \in [n].
\end{align*}
It can be checked that substituting these values in
\eqref{eq:strong-reduction-objective-a}--%
\eqref{eq:strong-reduction-objective-e}, we get $\Psi=0$. They also belong to the unit box. Indeed, for every $i$ and $k$,
\[
    0\leq z_{ik}
    \leq
    \frac{1}{2^k}\sum_{r=1}^k2^{r-1}
    =
    1-\frac{1}{2^k}
    <1,
\]
and the same argument applies to $w_k$. Moreover,
$0\leq s_i \leq\frac{U}{D}<1$.
We have established that the minimum value of $\Psi(x,z,s,w)$ is zero if and only if 
the \textsc{Subset Sum} instance is feasible.

We next express the objective function in the form required by
Problem~\eqref{eq:QP}, which does not include an additive constant. The only
constant terms produced by expanding $\Psi$ occur in
\eqref{eq:strong-reduction-objective-d}. Since $t_k\in\{0,1\}$, their sum
is
$$
    C:=\sum_{k=1}^{\ell}t_k^2
      =\sum_{k=1}^{\ell}t_k.
$$
Define
$F(x,z,s,w):=\Psi(x,z,s,w)-C$.
After expansion, $F$ can be written as
$$
    F(y)=y^\top Qy+c^\top y
$$
for a symmetric integral matrix $Q$ and an integral vector $c$, where $y \in [0,1]^N$
collects all the introduced variables. Note that $Q$ is integral because every product of two distinct variables arises from squaring an affine expression and therefore has an even coefficient in $F$. We then have
$$
    \min_{y\in[0,1]^N}F(y)=-C
$$
if and only if the original \textsc{Subset Sum} instance is feasible.
Thus, we define the decision threshold as
$\gamma:=-C$.

We now verify strong $\NP$-hardness. The constructed instance of Problem~\eqref{eq:QP} has
$N=2n\ell+n+\ell$
variables.
Since
$$
    \ell=\left\lceil\log_2(U+1)\right\rceil
    =O\left(\log n+\max_{i=1,\dots,n}\log(a_i+1)\right),
$$
the constructed instance has length polynomial in the length of the
\textsc{Subset Sum} instance.
Moreover, all numerical coefficients in the resulting quadratic optimization problem 
are bounded independently of the \textsc{Subset Sum} instance. To see this, observe that
each squared affine expression has coefficients in
$\{-1,0,1,2\}$,
and every variable occurs in only a constant number of such expressions.
A direct inspection of
\eqref{eq:strong-reduction-objective-a}--%
\eqref{eq:strong-reduction-objective-e} gives
$\|Q\|_{\max}\leq 5$ and $\|c\|_\infty\leq 4$.
Furthermore,
$|\gamma|=C\leq\ell\leq N$.
Thus every number appearing in the constructed instance is bounded by a polynomial in the number of variables, and the decision version of Problem~\eqref{eq:QP} is therefore strongly $\NP$-hard~\cite{GarJohBook}.

It remains to establish the treewidth bound. We describe a tree decomposition for a graph that contains the interaction graph of Problem~\eqref{eq:QP} as a subgraph. Since deleting edges cannot increase
treewidth, this suffices even if some quadratic coefficients cancel after expansion.
For each $i \in [n]$, define the bag
$$
A_i:=\{s_{i-1},s_i,z_{i\ell}\}\setminus\{s_0\}.
$$
Arrange these bags in the path
$A_1-A_2-\cdots-A_n$.
For each $i\in [n]$ and $k\in[\ell]$, define the bag
$$P_{ik}:=\{z_{i,k-1},z_{ik},x_{ik}\}\setminus\{z_{i0}\}.
$$
Moreover, for each $i \in [n]$ and $k \in [\ell-1]$, define the bag
$$R_{ik}:= \{z_{ik},x_{ik},x_{i,k+1}\}.
$$
For each fixed $i$, arrange these bags in the path
$P_{i1}-R_{i1}-P_{i2}-R_{i2}
    -\cdots-R_{i,\ell-1}-P_{i\ell}$,
and attach $P_{i\ell}$ to $A_i$. Define the bags
$$   
H_k:=\{w_{k-1},w_k\}\setminus\{w_0\}, \quad \forall k \in [\ell],
$$
and arrange these bags in the path
$H_1-H_2-\cdots-H_\ell$.
Finally, introduce the bag
$E:=\{s_n,w_\ell\}$,
and attach $E$ to both $A_n$ and $H_\ell$. First, it is simple to check that every variable $x,z,s,w$ appears in at least one of the bags defined above, implying that Property 1 of a tree decomposition is satisfied. Second,
the variables corresponding to every quadratic term in $\Psi$ are contained in at least one of these bags, implying that Property 2 of a tree decomposition is satisfied. 
It remains to verify Property~3. We do so by
explicitly identifying, for each variable, all bags in which it occurs. The following cases arise:

\medskip
\begin{itemize}[leftmargin=0.8cm]
\item [$(i)$] variables $x_{ik}$, $i \in [n]$, $k \in [\ell]$: 
if $k=1$ and $\ell\geq 2$, then $x_{i1}$ occurs in the bags $P_{i1}$ and $R_{i1}$, which are adjacent along a path. For
$2\leq k\leq \ell-1$, the variable $x_{ik}$ occurs in
$R_{i,k-1}$, $P_{ik}$ and $R_{ik}$.
These three bags occur consecutively along a path as
$R_{i,k-1}-P_{ik}-R_{ik}$.
Finally, $x_{i\ell}$ occurs exactly in
$R_{i,\ell-1}$ and $P_{i\ell}$, which are adjacent. If $\ell=1$, then $x_{i1}$ occurs
only in $P_{i1}$, and the claim is immediate.

\item [$(ii)$] variables $z_{ik}$, $i \in [n]$, $k \in [\ell]$: for $1\leq k\leq \ell-1$, 
$z_{ik}$ occurs in the three bags
$P_{ik}$, $R_{ik}$, and $P_{i,k+1}$, which
appear consecutively as
$P_{ik}-R_{ik}-P_{i,k+1}$.
The variable $z_{i\ell}$ occurs in
$P_{i\ell}$ and $A_i$.
By construction, $P_{i\ell}$ is adjacent to $A_i$.

\item [$(iii)$] variables $s_i$, $i \in [n]$:
for $i\in [n-1]$, the variable $s_i$ occurs in the two bags
$A_i$ and $A_{i+1}$, which are adjacent along the path
$A_1-A_2-\cdots-A_n$. The variable $s_n$ occurs
in $A_n$ and $E$
which are adjacent by construction. 

\item [$(iv)$] variables $w_k$, $k \in [\ell]$: for
$k\in [\ell-1]$, the variable $w_k$ occurs in
$H_k$ and
$H_{k+1}$, which are adjacent in the path
$H_1-H_2-\cdots-H_\ell$.
The variable $w_\ell$ occurs in
$H_\ell$ and $E$ which are adjacent by construction. 
\end{itemize}
\medskip

Hence property~3 of the tree decomposition holds.
Every bag has cardinality at
most three, and therefore the interaction graph has treewidth at most two and this completes the proof.
\end{proof}

\begin{remark}\label{rem:strong-vs-weak-hardness}
The machinery in the proof of
Theorem~\ref{thm:strong-NP-hardness-tw2} is needed to establish
\emph{strong} $\NP$-hardness. If one were interested only in
$\NP$-hardness, a substantially simpler construction would suffice.
Indeed, given a \textsc{Subset Sum} instance
$a_1,\ldots,a_n,T$, one may set $U:=\sum_{i=1}^n a_i$, introduce only
variables $x_i,s_i\in[0,1]$, with $s_0:=0$, and consider the problem of
minimizing
\[
    (Us_n-T)^2
    +\sum_{i=1}^n
        (Us_i-Us_{i-1}-a_i x_i)^2
    +\sum_{i=1}^n x_i(1-x_i).
\]
The objective function has minimum value zero if and only if the
\textsc{Subset Sum} instance is feasible, and its interaction graph has treewidth at most two, with bags
$\{s_{i-1},s_i,x_i\}$, $i \in [n]$. This construction, however, uses the integers $a_i$ and $U$ directly as coefficients and therefore establishes only weak $\NP$-hardness.
The bit-serial construction used in the proof of
Theorem~\ref{thm:strong-NP-hardness-tw2} replaces these large numerical
coefficients by polynomially many local relations with bounded
coefficients, thereby transferring the information contained in the
binary representations of the input integers from coefficient magnitude
to the combinatorial structure of the instance. This idea is closely
related to the bounded-coefficient encoding used by Eisenbrand
et al.~\cite{eisenbrand19} in their study of integer programming (Section~5.2 of~\cite{eisenbrand19}).

Cifuentes and Parrilo~\cite{CifPar16} show that checking the feasibility
of quadratic polynomial systems is $\NP$-hard even
when the interaction graph is a path, using a reduction from
\textsc{Subset Sum} (Example~1 in~\cite{CifPar16}). Their construction retains the potentially large
\textsc{Subset Sum} integers as coefficients and therefore yields only
weak $\NP$-hardness. Note that the technique used
in the proof of Theorem~\ref{thm:strong-NP-hardness-tw2} does not
directly strengthen their result while preserving treewidth at most one: each bit-serial relation induces a triangle in the
associated graph. Namely, the resulting construction has treewidth two. However, interestingly, the proof technique of Theorem~\ref{thm:quartic-path} can be used to prove that checking the feasibility
of quadratic polynomial systems is strongly $\NP$-hard even
when the interaction graph is a path.
There is also a close connection between our result and the construction of Bienstock and Mu\~noz~\cite{BieMun18}. Their construction yields weak $\NP$-hardness for a polynomial-feasibility problem whose interaction graph has treewidth two.
In fact, replacing the large coefficients in the Bienstock--Mu\~noz construction by the bit-serial encoding used in Theorem~\ref{thm:strong-NP-hardness-tw2}, we deduce that their  polynomial-feasibility problem is strongly $\NP$-hard.
\end{remark}

\section{Larger treewidths: A polynomial-time solvable class}
\label{sec: poly}

In this section, we obtain sufficient conditions for the polynomial-time solvability of Problem~\eqref{eq:QP} for classes of instances whose interaction graphs may have unbounded treewidth. The key idea is that there always exists an optimal solution to Problem~\eqref{eq:QP} in which every variable with a nonpositive quadratic coefficient is binary. This allows us to separate the problem into a combinatorial part and a continuous part, and to exploit the tractability of the corresponding components together with suitably simple interactions between them. We first introduce some terminology and tools that will be used in the proofs of our results.

We make use of the following celebrated result in discrete optimization that is concerned with tractability of binary optimization problems whose interaction graph has bounded treewidth. In the following, by $\poly(x,y)$ we mean a polynomial function in $x,y$.

\begin{proposition}[\cite{CraHanJau90}]\label{bpoly}
Consider a binary optimization problem
$$
\min_{x\in\{0,1\}^V}\sum_{\ell=1}^m f_\ell(x_{S_\ell}),
$$
where $S_\ell\subseteq V$ and $f_\ell:\{0,1\}^{S_\ell}\to\Q$ for every $\ell\in[m]$. Define the interaction graph $G=(V,E)$ by letting $\{i,j\}\in E$ whenever $i,j\in S_\ell$ for some $\ell\in[m]$. If $\tw(G)=\kappa$, then the problem can be solved by dynamic programming in $2^{O(\kappa)}\poly(|V|,m)$ operations, provided that the functions $f_\ell$ can be evaluated in polynomial time. In particular, if $\kappa\in O(\log\poly(|V|,m))$, then the problem can be solved in polynomial time.
\end{proposition}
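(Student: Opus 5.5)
The plan is the standard dynamic program over a tree decomposition, as in~\cite{CraHanJau90}. First I will compute a tree decomposition $(\X,T)$ of $G$ whose width is $O(\kappa)$. Bodlaender's algorithm gives exact width $\kappa$ in time $2^{O(\kappa^3)}|V|$, which is too slow for the claimed $2^{O(\kappa)}$ bound. So I would instead use a constant-factor approximation running in $2^{O(\kappa)}\poly(|V|)$ time, for instance the algorithm of Robertson--Seymour type or Bodlaender et al. This yields width at most $c\kappa$ and $O(|V|)$ bags. I would then make the decomposition \emph{nice}, with introduce, forget and join nodes and $O(|V|)$ nodes, without increasing the width.

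Next, I assign each function $f_\ell$ to one bag. Each $S_\ell$ is a clique in $G$, and every clique of a graph is contained in some bag of any tree decomposition; this is a standard Helly-type consequence of Property~3. So for each $\ell$ I can find a bag $X_{i(\ell)}\supseteq S_\ell$ by scanning the bags, at a cost of $\poly(|V|,m)$. For each node $i$ of $T$ and each assignment $\sigma\in\{0,1\}^{X_i}$, I define $D_i(\sigma)$ as the minimum, over all binary assignments to the variables appearing in the subtree rooted at $i$ that agree with $\sigma$, of the sum of the $f_\ell$ assigned to bags in that subtree. The recurrences are the usual ones. At an introduce node, I add the newly assigned $f_\ell$. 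At a forget node, I take the minimum of the two extensions. At a join node, I add the two child tables, and each $f_\ell$ is counted only once because it is assigned to a unique bag. Property~3 is exactly what makes these recurrences correct: a variable that has been forgotten never reappears higher in the tree, so the subproblems on different children interact only through the shared bag.

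For the running time, each table has $2^{|X_i|}\le 2^{c\kappa+1}$ entries, and each entry is computed with $O(1)$ table lookups plus the evaluation of the assigned functions. Summed over $O(|V|)$ nodes and $m$ function evaluations, each done in polynomial time, this gives $2^{O(\kappa)}\poly(|V|,m)$ operations. The optimum is the minimum of the root table, and an optimal assignment is recovered by backtracking. If $\kappa=O(\log\poly(|V|,m))$, then $2^{O(\kappa)}$ is itself polynomial, which gives the final sentence of the statement.

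The main obstacle is the first step, obtaining a decomposition of width $O(\kappa)$ within $2^{O(\kappa)}\poly$ time, since exact treewidth computation is $\NP$-hard. I would cite the known constant-factor single-exponential approximation algorithms rather than reprove them. If the intended reading is that a decomposition is given as part of the input, as is common in~\cite{CraHanJau90}, this step disappears.
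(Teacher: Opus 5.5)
Your proposal is correct. The paper gives no proof of Proposition~\ref{bpoly}; it only cites \cite{CraHanJau90}, whose ``basic algorithm'' is the same dynamic program written as recursive variable elimination along an elimination ordering, so your DP over a nice tree decomposition is essentially the intended argument (each $f_\ell$ attached to a node whose bag contains the clique $S_\ell$).

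You are also right that the $2^{O(\kappa)}$ bound needs a single-exponential constant-factor treewidth approximation, run for $k=0,1,2,\dots$ since $\kappa$ is not known, rather than Bodlaender's exact algorithm. This matches the paper's own later use of \cite{Korhonen21} in Proposition~\ref{prop:findJ}.

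Two small imprecisions are harmless. A nice decomposition has $O(\kappa|V|)$ nodes rather than $O(|V|)$. And the claim of polynomial time also uses that each table entry, being a sum of at most $m$ values of the $f_\ell$, has polynomial length.
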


\subsection{Hyperplane arrangements and the essential rank}

Hyperplane arrangements~\cite{Stanley07} are a useful geometric tool to partition the space into finitely many polyhedral regions. When the geometry of the hyperplanes is sufficiently low-dimensional, the number of such regions can be polynomial rather than exponential. We use this idea to reduce the number of cases that must be considered in our algorithm. In the following, we provide a brief overview of the results that we need to prove our result.

Let $\H=\{H_1,\dots,H_M\}$ be a collection of hyperplanes in $\R^d$, where $H_i=\{x\in\R^d:a_i^\top x=b_i\}$ and $a_i \neq 0$.
The \emph{arrangement} generated by $\H$, denoted by
$\A(\H)$, is the collection of connected components of
$$
\R^d\setminus \bigcup_{i=1}^M H_i.
$$
The elements of \(\mathcal A(\mathcal H)\) are called the \emph{cells} of the arrangement. For every cell $R\in\mathcal A(\mathcal H)$, the sign of
$a_i^\top x-b_i$
is constant on $R$ for every $i\in[M]$. Consequently,
$$
R=
\bigcap_{i=1}^M
\{x\in\R^d:\sigma_i(a_i^\top x-b_i)>0\}
$$
for suitable signs $\sigma_i\in\{-1,+1\}$. In particular, every cell is an open polyhedron.
Moreover, by Lemma~2.5 in~\cite{EdeRouSei86} the number of cells satisfies
\begin{equation}\label{cellnum}
|\mathcal A(\mathcal H)|
\le
\sum_{r=0}^d {M\choose r} \in O(M^d).
\end{equation}
By Theorem~3.3 in
\cite{EdeRouSei86}, an arrangement $\A(\H)$ of hyperplanes in $\mathbb R^d$ can be constructed in time $O(M^d)$. This implies that if $d \in O(1)$, then the cells of the arrangement can be computed in polynomial time.
The upper bound on the number of cells given by~\eqref{cellnum} can be improved using the concept of essential rank. 
The \emph{essential rank} of $\mathcal H$, denoted by $\rho$, is the dimension of the linear space spanned by the normals of the hyperplanes in the arrangement~\cite{Stanley07}. Define $W:={\rm span}\{a_i:i\in[M]\}$. We then have
$\rho:=\dim (W)$.
An arrangement is essential when $\rho$ is equal to the
dimension of the ambient space. It then follows that
$\mathcal H_W:=\{H\cap W:H\in\mathcal H\}$
is an essential arrangement in $W$. 
Stanley~\cite{Stanley07} shows that 
the number of cells of $\mathcal H$ is equal to the number of cells of $\H_W$. This together with~\eqref{cellnum} implies that
$$
|\A(\H)|=|\A(\H_W)|\le\sum_{r=0}^{\rho}{M\choose r}\in O(M^\rho).
$$

\subsection{A sufficient condition for polynomial-time solvability}
Let $G=(V,E)$ be a graph.
Recall that for a subset $S \subseteq V$, we denote by $G_S$ the subgraph of $G$ induced by $S$.
Consider a vertex $v \in V$. We define the \emph{neighborhood} of $v$ in $G$ as
\begin{equation}\label{nghood}
N_G(v) := \big\{u \in V : \{u,v\} \in E \big\}.
\end{equation}
Similarly, for a subset $S \subseteq V$, we define the neighborhood of $S$ in $G$ as: 
\begin{equation}\label{nghoodset}
N_G(S) := \Big(\bigcup_{v\in S} N_G(v)\Big) \setminus S.
\end{equation}

We are now ready to present the main result of this section. 
Figure~\ref{fig:oneBlock-example} demonstrates the application of the theorem.

\begin{figure}[t]
    \centering
    \scalebox{0.85}{\begin{tikzpicture}[
        vplus/.style={circle,draw,fill=black,minimum size=5.5pt,inner sep=0pt},
        nbr/.style={circle,draw,fill=black!30,minimum size=5.5pt,inner sep=0pt},
        rest/.style={circle,draw,fill=white,minimum size=5.5pt,inner sep=0pt}
    ]
        \node[vplus] (c) at (0,0) {};
        \foreach \j in {0,...,11}{
            \pgfmathsetmacro{\ang}{15+30*\j}
            \node[nbr]  (g\j) at (\ang:1.00) {};
            \node[rest] (m\j) at (\ang:1.85) {};
            \node[rest] (r\j) at (\ang:2.70) {};
            \draw (c) -- (g\j) -- (m\j) -- (r\j);
        }
        \draw (r11) -- (r0);
        \foreach \p in {1,3,5,7,9}{
            \pgfmathtruncatemacro{\q}{\p+1}
            \draw (r\p) -- (r\q);
        }
        \node[anchor=north] at (0,-3.05) {(a)};
    \end{tikzpicture}}
    \hspace{0.04\textwidth}
    \scalebox{0.85}{\begin{tikzpicture}[
        vplus/.style={circle,draw,fill=black,minimum size=5.5pt,inner sep=0pt},
        nbr/.style={circle,draw,fill=black!30,minimum size=5.5pt,inner sep=0pt},
        rest/.style={circle,draw,fill=white,minimum size=5.5pt,inner sep=0pt}
    ]
        \node[vplus] (c) at (0,0) {};
        \foreach \j in {0,...,11}{
            \pgfmathsetmacro{\ang}{15+30*\j}
            \node[nbr]  (g\j) at (\ang:1.00) {};
            \node[rest] (m\j) at (\ang:1.85) {};
            \node[rest] (r\j) at (\ang:2.70) {};
            \draw (c) -- (g\j) -- (m\j) -- (r\j);
        }
        \foreach \j in {0,...,10}{
            \pgfmathtruncatemacro{\q}{\j+1}
            \draw (r\j) -- (r\q);
        }
        \draw (r11) -- (r0);
        \node[anchor=north] at (0,-3.05) {(b)};
    \end{tikzpicture}}
    \caption{
    Two families of interaction graphs illustrating the application of
    Theorem~\ref{th:oneBlock}.
    In both cases, $V^+$ consists of a single black vertex, the vertices in
    $N_G(V^+)$ are depicted in gray, and the remaining vertices are depicted in white.
    Moreover, we assume that $|N_G(V^+)|\in\Theta(|V|)$.
    In~(a), each connected component
    $D$ of $G_{V^-}$ is a path. Therefore,
    $\tw(G_{V^-})=1$ and for each connected component
    $D$ we have $|D \cap N_G(V^+)|=2$. Thus all the assumptions of
    Theorem~\ref{th:oneBlock} are satisfied.
    In~(b), $G_{V^-}$ is connected with a
    unique connected component $D$, and therefore
    $|D\cap N_G(V^+)|=|N_G(V^+)|\in\Theta(|V|)$, implying that
    Theorem~\ref{th:oneBlock} does not
    apply. Part~(a) also satisfies the assumptions of Corollary~\ref{cor:fixVplus} because $|V^+|=1$.
    }
    \label{fig:oneBlock-example}
\end{figure}
\begin{theorem}\label{th:oneBlock}
Let $G=(V,E)$ be the interaction graph of Problem~\eqref{eq:QP}.
Define $V^+:=\{i\in V:q_{ii}>0\}$, and $V^-:=V\setminus V^+$.
Suppose that the following conditions hold:
\medskip
\begin{enumerate}
\item
$\tw(G_{V^-})\in O(\log |V|)$ and for every connected component
$D$ of $G_{V^-}$, we have 
$$|D\cap N_G(V^+)|\in O(\log |V|).$$

\item
For every connected component $C$ of $G_{V^+}$ and every
$d\in\Q^C$, the optimization problem
\begin{equation*}
\min_{x_C\in[0,1]^C}
\left(
x_C^\top Q_Cx_C+d^\top x_C
\right)
\end{equation*}
can be solved in time polynomial in
$\L+\langle d\rangle$.

\item ${\rm rank}(Q_{V^+,N_G(V^+)})\in O(1)$.

\end{enumerate}
\medskip
Then Problem~\eqref{eq:QP} can be solved in time polynomial in the input length.
\end{theorem}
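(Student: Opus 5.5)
We restrict $x_{V^-}$ to $\{0,1\}^{V^-}$, which loses nothing: fixing all other variables, $f$ is concave or affine in $x_i$ whenever $q_{ii}\le 0$, so an optimal solution can be moved to $x_i\in\{0,1\}$ one coordinate at a time. For fixed binary $x_{V^-}$, the problem separates over the components $C$ of $G_{V^+}$, because there are no edges between distinct components. Component $C$ receives the linear term
\[
d_C(x_{V^-})=c_C+2Q_{C,N_G(C)}x_{N_G(C)}.
\]
Thus the problem reduces to minimizing, over $x_{V^-}\in\{0,1\}^{V^-}$, the function
\[
x_{V^-}^\top Q_{V^-}x_{V^-}+c_{V^-}^\top x_{V^-}+\sum_C g_C\big(Q_{C,N_G(C)}x_{N_G(C)}\big),
\]
where $g_C(y):=\min_{x_C\in[0,1]^C}\big(x_C^\top Q_Cx_C+(c_C+2y)^\top x_C\big)$. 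By condition~2, each $g_C$ is evaluable in polynomial time, since $y$ is a $0/1$ combination of entries of $Q$ and hence has polynomial length.

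The difficulty is that a single term $g_C$ may couple many vertices of $N_G(V^+)$ spread across many components $D$ of $G_{V^-}$, so the binary problem need not have small treewidth. I plan to use condition~3 to remove this coupling. Write $M:=Q_{V^+,N_G(V^+)}$ with $\rho:=\operatorname{rank}M=O(1)$, and factor $M=AB$ with $A\in\Q^{V^+\times\rho}$ and $B\in\Q^{\rho\times N_G(V^+)}$ (for example, $B$ consists of $\rho$ independent rows of $M$). Then all the linear terms are determined by the low-dimensional vector $\lambda:=Bx_{N_G(V^+)}\in\R^\rho$, which decomposes as $\lambda=\sum_D\lambda_D$ with $\lambda_D:=B_{\cdot,D\cap N_G(V^+)}x_{D\cap N_G(V^+)}$. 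By condition~1, each $D$ has only $O(\log|V|)$ neighbors of $V^+$, so each $\lambda_D$ ranges over at most $\poly(|V|)$ values, which can be enumerated.

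The main obstacle is that the sum $\lambda$ can take exponentially many values, and $\sum_C g_C(A_C\lambda)$ is not separable over the components $D$. My first attempt is a hyperplane-arrangement argument in $\lambda$-space. The total minimum over $x_{V^+}$ equals
\[
\min_{x_{V^+}}\big(x_{V^+}^\top Q_{V^+}x_{V^+}+c_{V^+}^\top x_{V^+}+2\lambda^\top A^\top x_{V^+}\big),
\]
which is concave in $\lambda$ as a minimum of affine functions. By Lemma~\ref{lem:activeset}-type reasoning, it is piecewise quadratic, with pieces indexed by (free set, bound pattern). So the overall objective is the minimum over $\lambda$ of a concave function plus a binary cost. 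Rather than attack this directly, I would swap the order: for each candidate $x_{V^+}$ from a polynomial family, solve the binary problem. Concretely, it suffices to enumerate the vectors $\nu:=A^\top x_{V^+}\in\R^\rho$ up to the sign patterns they induce on the finitely many hyperplanes $\{\nu:\nu^\top(\lambda_D-\lambda'_D)=0\}$. These sign patterns are the only information the binary subproblem needs, because once $\nu$ is fixed, the cost $2\nu^\top\lambda=\sum_D2\nu^\top\lambda_D$ separates over the components $D$. The number of these hyperplanes is $\poly(|V|)$, and by \eqref{cellnum} with essential rank at most $\rho=O(1)$, there are polynomially many cells.

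For each cell, I pick a representative $\nu$ and solve the binary problem $\min_{x_{V^-}}x_{V^-}^\top Q_{V^-}x_{V^-}+c_{V^-}^\top x_{V^-}+2\nu^\top Bx_{N_G(V^+)}$. This problem has interaction graph $G_{V^-}$, of treewidth $O(\log|V|)$, so Proposition~\ref{bpoly} solves it in polynomial time. I then fix the resulting $x_{V^-}$ and solve for $x_{V^+}$ component-wise via condition~2, and finally return the best solution found over all cells.

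It remains to show correctness: the optimal $x^\star$ has its $\nu^\star=A^\top x^\star_{V^+}$ in the closure of some cell. In that cell, the binary minimizer for the representative $\nu$ is also a binary minimizer for $\nu^\star$, since within a cell the ordering of the choices within each $D$ is preserved. Recomputing $x_{V^+}$ then yields a value no larger than the optimum. Handling ties on the boundary, for instance by also enumerating lower-dimensional faces, and bounding the bit lengths of the representatives are the remaining technical steps I expect to need care.
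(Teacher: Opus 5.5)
\textbf{There is a genuine gap in the key step.} Your overall plan matches the paper's: make $V^-$ binary, route the low-rank coupling through $\nu=A^\top x_{V^+}\in\R^\rho$, use a polynomial-size arrangement in $\R^\rho$, then apply condition~2 on the components of $G_{V^+}$. However, the arrangement you write down is wrong, and your correctness claim (``within a cell the ordering of the choices within each $D$ is preserved'') fails for it.

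Your hyperplanes $\{\nu:\nu^\top(\lambda_D-\lambda_D')=0\}$ pass through the origin, so they compare only the coupling terms. Write $I_D:=D\cap N_G(V^+)$ and let $\mu_D(z)$ be the minimum of $x_D^\top Q_Dx_D+c_D^\top x_D$ over $x_D\in\{0,1\}^D$ with $x_{I_D}=z$. For fixed $\nu$, the subproblem on $D$ is $\min_z\big(\mu_D(z)+2\nu^\top B_{\cdot,I_D}z\big)$. Two choices are therefore compared through $\mu_D(z)-\mu_D(z')+2\nu^\top B_{\cdot,I_D}(z-z')$, which is affine, not linear, in $\nu$. Hence the binary minimizer can change inside one of your conic cells, and this breaks the algorithm. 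Concretely, take $f=8x_u^2-12x_u+(4x_u-1)x_{v_1}+(4x_u-3)x_{v_2}$, i.e.\ $q_{uu}=8$, $q_{uv_1}=q_{uv_2}=2$, $q_{v_iv_i}=0$. It satisfies all three conditions with $\rho=1$, $A=(1)$, $B=(2,2)$ and $\nu=x_u$. Both of your hyperplanes equal $\{0\}$. Yet on the cell $\nu>0$ the binary minimizer is $(1,1)$, $(0,1)$, $(0,0)$ on $(0,\tfrac14)$, $(\tfrac14,\tfrac34)$, $(\tfrac34,\infty)$ respectively. The unique optimum is $x_u=\tfrac12$, $(x_{v_1},x_{v_2})=(0,1)$, with value $-5$. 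The representatives $\nu=\pm1$, and also the face $\nu=0$, yield only $(0,0)$ or $(1,1)$ and the value $-\tfrac92$. So enumerating lower-dimensional faces does not help either.

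\textbf{The fix.} The missing ingredient is exactly the paper's. For every $D$ and every $z\in\{0,1\}^{I_D}$ (polynomially many by condition~1), compute $\mu_D(z)$ with Proposition~\ref{bpoly}; its interaction graph is $G_{D\setminus I_D}$, of treewidth $O(\log|V|)$. Then use the affine hyperplanes $\{\nu:\mu_D(z)+2\nu^\top B_{\cdot,I_D}z=\mu_D(z')+2\nu^\top B_{\cdot,I_D}z'\}$ for the pairs with $B_{\cdot,I_D}z\ne B_{\cdot,I_D}z'$.
\begin{itemize}
\item They are still polynomially many and live in $\R^\rho$, so \eqref{cellnum} still gives polynomially many cells.
\item On each open cell the set of minimizing $z$ is constant for every $D$. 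By continuity such a minimizer remains optimal on the closure, so your worry about ties is unnecessary and no lower-dimensional faces are needed.
\end{itemize}
With this correction your argument becomes the paper's proof. Your $\nu$ is just a coordinatization of the essential subspace $L$, since the paper's normals $b_k(z)-b_k(z')$ satisfy $b_k(z)^\top y=2\nu^\top B_{\cdot,I_k}z$ for $\nu=A^\top y$. The paper reads off the minimizing $\tilde z_k$ directly from each cell instead of solving the whole binary problem at a representative point, which also spares you from bounding the length of representatives.
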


\begin{proof}
It can be checked that there exists an optimal solution $x^*$ of Problem~\eqref{eq:QP} with $x^*_i \in \{0,1\}$ for all $i \in V^-$. Therefore, it suffices to solve the following mixed-binary quadratic program: 
\begin{align}\label{mixedQP}
\min \quad & x^\top Q x + c^\top x \\
{\rm s.t.} \quad & 0 \leq x_i \leq 1, \quad \forall i \in V^+\nonumber\\
& x_i \in \{0,1\}, \quad \forall i \in V^-. \nonumber
\end{align}
Denote by $f(x)$ the objective function of Problem~\eqref{mixedQP}.
Let $D_1,\dots,D_p$
denote the connected components of $G_{V^-}$ and for each $k\in[p]$, define
$A_k:=D_k\cap N_G(V^+)$ and $F_k := D_k \setminus A_k$.
Since  $D_1,\dots,D_p$ are the connected components of $G_{V^-}$, there are no bilinear terms $x_i x_j $ in $f(x)$ with  $i \in D_k$, $j \in D_{\ell}$ with $k \neq \ell \in [p]$. Hence the objective function can be written as:
$$
f(x)= f_+(x_{V^+}) + \sum_{k=1}^p\Big(g_k(x_{D_k})+2\sum_{i\in A_k} x_i\, Q_{V^+,i}^\top x_{V^+}\Big),
$$
where
$$
f_+(x_{V^+}):=x_{V^+}^\top Q_{V^+}x_{V^+}+c_{V^+}^\top x_{V^+},
$$
and
$$
g_k(x_{D_k})
:=
x_{D_k}^\top Q_{D_k}x_{D_k}
+
c_{D_k}^\top x_{D_k}.
$$
Now fix $x_{V^+}\in[0,1]^{V^+}$. Then minimizing over the binary variables $x_{V^-}$ decomposes over the connected components $D_k$, $k \in [p]$:
$$
\min_{x_{V^-}\in\{0,1\}^{V^-}} f(x_{V^+},x_{V^-})
=f_+(x_{V^+}) + \sum_{k=1}^p \theta_k(x_{V^+}),
$$
where
$$
\theta_k(x_{V^+})
:=\min_{x_{D_k}\in\{0,1\}^{D_k}}
\Big(g_k(x_{D_k})+2\sum_{i\in A_k}x_i\,Q_{V^+,i}^\top x_{V^+}
\Big).
$$
Fix \(k\in[p]\). For each
$z\in\{0,1\}^{A_k}$, define
\begin{equation}\label{probaux1}
\mu_k(z)
:=
\min_{w\in\{0,1\}^{D_k\setminus A_k}}
g_k(z,w),
\end{equation}
where \(g_k(z,w)\) denotes the value of \(g_k(x_{D_k})\) after letting $x_{A_k}=z$ and $x_{D_k\setminus A_k}=w$.
More explicitly,
$$
g_k(z,w)
=
w^\top Q_{F_k}w
+
\left(
c_{F_k}+2Q_{F_k,A_k}z
\right)^\top w
+
z^\top Q_{A_k}z+c_{A_k}^\top z.
$$
Therefore, the interaction graph of the binary
quadratic optimization problem in~\eqref{probaux1} is $G_{F_k}$. Since $G_{F_k}$ is a subgraph of $G_{D_k}$, from assumption~1 it follows that $\tw(G_{F_k}) \leq \tw(G_{D_k}) \leq \tw(G_{V^-}) \in O(\log (|V|))$. Therefore, by Proposition~\ref{bpoly}, for every fixed $z\in\{0,1\}^{A_k}$, the value $\mu_k(z)$ can be computed in time polynomial in the input length. 
Moreover, since by assumption~1, we have
$|A_k| \in O(\log |V|)$, all values of $\mu_k(z)$, 
$z\in\{0,1\}^{A_k}$ can be computed in time polynomial in $|V|$. Finally, since $p \leq |V|$, values of $\mu_k(z)$, 
$z\in\{0,1\}^{A_k}$ for all $k \in [p]$ can be computed in polynomial time.

For each $z\in\{0,1\}^{A_k}$, define 
\begin{equation}\label{bkdef}
b_k(z):=2\sum_{i\in A_k} z_i Q_{V^+,i}.
\end{equation}
Then we have
$$
\theta_k(x_{V^+})
=
\min_{z\in\{0,1\}^{A_k}}
\left(
\mu_k(z)+b_k(z)^\top x_{V^+}
\right).
$$
Therefore, Problem~\eqref{mixedQP} is equivalent to the following optimization problem:
\begin{equation}\label{pready}
\min_{x_{V^+}\in[0,1]^{V^+}}
\left(
f_+(x_{V^+})
+
\sum_{k=1}^p
\min_{z\in\{0,1\}^{A_k}}
\left(
\mu_k(z)+b_k(z)^\top x_{V^+}
\right)
\right).
\end{equation}

So far, for $k \in [p]$, we denoted by $z$ a point in $\{0,1\}^{A_k}$.
From now on, it will be useful to keep explicitly the dependence on $k$ and we denote it instead by $z_k$.

\begin{claim}
\label{claim ADP}
We can compute in time polynomial in $|V|$ a set 
$$Z \subseteq \{0,1\}^{A_1} \times \cdots \times \{0,1\}^{A_p}$$ 
of cardinality polynomial in $|V|$
such that Problem~\eqref{pready} is equivalent to the following optimization problem:
\begin{equation}\label{pready 2}
\min_{(z_1,\dots,z_p)\in Z}
\left(
\min_{x_{V^+}\in[0,1]^{V^+}}
\left(
f_+(x_{V^+})
+
\sum_{k=1}^p
\left(
\mu_k(z_k)+b_k(z_k)^\top x_{V^+}
\right)
\right)
\right).
\end{equation}
\end{claim}

\begin{cpf}
The function $\theta_k(x_{V^+})$
is the minimum of finitely many affine functions of $x_{V^+}$. We partition $\R^{V^+}$
into regions over which, for each $k \in [p]$, the minimizing affine function is fixed. Such changes can occur only at points where two affine functions become
equal. Therefore, for each $k\in[p]$ and every distinct pair
$z_k,z_k'\in\{0,1\}^{A_k}$, we define the hyperplane
\begin{equation}\label{hyperplane}
H_{k,z_k,z_k'}
:=
\left\{
y\in\mathbb R^{|V^+|}:
\mu_k(z_k)+b_k(z_k)^\top y
=
\mu_k(z'_k)+b_k(z'_k)^\top y
\right\}.
\end{equation}
If $b_k(z_k)=b_k(z'_k)$ then $H_{k,z_k,z_k'}$ either defines the empty set or is all of $\R^{V^+}$, and therefore does not create a cell boundary. Hence, in the arrangement, we keep only the pairs for which $b_k(z_k) \neq b_k(z_k')$.
Thus, let
\begin{equation}\label{allhyperplanes}
\mathcal H
:=
\Big\{H_{k,z_k,z_k'}:
k\in[p],\ z_k,z_k'\in\{0,1\}^{A_k},\; z_k\neq z_k', \; b_k(z_k) \neq b_k(z'_k)\Big\}.
\end{equation}
Define
$$
L:=\operatorname{span}\Big\{Q_{V^+,i}:i\in N_G(V^+)\Big\}
\subseteq \mathbb R^{V^+}.
$$
Then 
$$\dim (L)={\rm rank}(Q_{V^+,N_G(V^+)})=:r.$$
For any $z_k \neq z_k'\in \{0,1\}^{A_k}$, from~\eqref{bkdef} it follows that
$$
b_k(z_k)-b_k(z'_k)= 2\sum_{i\in A_k}(z_i-z_i')Q_{V^+,i}.
$$
Since $A_k\subseteq N_G(V^+)$, it follows that
$b_k(z_k)-b_k(z'_k)\in L$. 
Thus, every hyperplane normal in the arrangement belongs to $L$, implying that the essential rank $\rho$ of $\H$ is upper bounded by $r$. Therefore, by Lemma~2.5 in~\cite{EdeRouSei86} and Section~1.1 of~\cite{Stanley07} it follows that
$$
|\mathcal A(\mathcal H)| \leq \sum_{i=0}^{r} {|\mathcal H|\choose i} \in O(|\mathcal H|^{r}).
$$
From~\eqref{hyperplane} and~\eqref{allhyperplanes}, it follows that $|\mathcal H| \leq \sum_{k=1}^p{\binom{2^{|A_k|}}{2}}$.
Since by assumption~1, we have
$|A_k| \in O(\log |V|)$ for all $k$, and $p\le |V|$, we deduce that
$|\mathcal H| \in |V|^{O(1)}$. Moreover, by assumption~3, we have
$r \in O(1)$. Therefore, 
the number of cells of $\mathcal A(\mathcal H)$ is polynomial in $|V|$. Let $W$ denote the linear space spanned by the normals of the hyperplanes in $\H$. Then $\dim(W)=\rho\leq r$. By the essentialization argument given above, the cells of $\A(\H)$ are in one-to-one correspondence with the cells of the essentialized arrangement
$\H_W:=\{H\cap W:H\in\H\}$.
Since $\rho\in O(1)$ and $|\H|\in |V|^{O(1)}$, by
Theorem~3.3 in~\cite{EdeRouSei86}, applied to the arrangement
$\H_W$ in the $\rho$-dimensional space $W$, these cells can be
computed in time polynomial in $|V|$.

Next, fix one cell $R\in\mathcal A(\H)$. By definition of the arrangement, for every $k\in[p]$, the ordering of the affine functions
$\mu_k(z_k)+b_k(z_k)^\top x_{V^+}$, $z\in\{0,1\}^{A_k}$,
is constant on $R$. Hence, for every $k\in[p]$, there exists
$\tilde z_k\in\{0,1\}^{A_k}$
such that
$$
\min_{z\in\{0,1\}^{A_k}}
\left(
\mu_k(z_k)+b_k(z_k)^\top x_{V^+}
\right)
=
\mu_k(\tilde z_k)+b_k(\tilde z_k)^\top x_{V^+}
\qquad
\forall x_{V^+}\in R.
$$
For each cell $R\in\mathcal A(\H)$ we then add $(\tilde z_1, \dots \tilde z_p) \in \{0,1\}^{A_1} \times \cdots \times \{0,1\}^{A_p}$ to the set $Z$. Since $|Z| \leq |\mathcal A(\H)|$, it follows that $|Z| \in |V|^{O(1)}$
and since $\rho \in O(1)$, the cells and hence $Z$ can be computed in polynomial time.
We now show that this construction of $Z$ yields the equivalence between Problems~\eqref{pready} and \eqref{pready 2}. Fix $x_{V^+}\in[0,1]^{V^+}$. Define
$$
\varphi(x_{V^+}):=f_+(x_{V^+})+\sum_{k=1}^p\theta_k(x_{V^+}),
$$
and
$$
\psi_{z}(x_{V^+}):=f_+(x_{V^+})+\sum_{k=1}^p\big(\mu_k(z_k)+b_k(z_k)^\top x_{V^+}\big),
$$
for $z=(z_1,\dots,z_p)\in Z$, so that $\varphi$ is the objective value of Problem~\eqref{pready} and $\psi_z$, for $z\in Z$, is the inner objective value of Problem~\eqref{pready 2}. 
For every $k\in[p]$ and every $z_k\in\{0,1\}^{A_k}$, by definition of $\theta_k$ we have
$$
\mu_k(z_k)+b_k(z_k)^\top x_{V^+}\geq \theta_k(x_{V^+}).
$$
Summing over $k$ and adding $f_+(x_{V^+})$ gives
$\psi_z(x_{V^+})\geq \varphi(x_{V^+})$
for all $z\in Z$. Denote by $\overline R$ the closure of the cell $R$. 
Since
$$[0,1]^{V^+} \subseteq
\bigcup_{R\in\mathcal A(\mathcal H)}\overline R,$$
there is a cell $R$ with $x_{V^+}\in\overline R$. Let
$\tilde z=(\tilde z_1,\dots,\tilde z_p)\in Z$ be the tuple added to $Z$ for this cell $R$, which by construction satisfies
$\theta_k(y)=\mu_k(\tilde z_k)+b_k(\tilde z_k)^\top y$
for all $y\in R$ and for all $k\in[p]$.
Both sides of this identity are continuous functions of $y$, so the identity extends from $R$ to its closure $\overline R$, and in particular it holds at $y=x_{V^+}$. Hence
$\varphi(x_{V^+})=\psi_{\tilde z}(x_{V^+})$.
Therefore, 
\begin{equation}
\varphi(x_{V^+})=\min_{z\in Z}\psi_z(x_{V^+})
\qquad
\forall\, x_{V^+}\in[0,1]^{V^+}.
\end{equation}
Taking the minimum with respect to $x_{V^+}\in[0,1]^{V^+}$ on both sides, and using that $Z$ is finite so that the minimum over $x_{V^+}$ and the minimum over $z\in Z$ can be interchanged, we obtain
$$
\min_{x_{V^+}\in[0,1]^{V^+}}\varphi(x_{V^+})
=
\min_{x_{V^+}\in[0,1]^{V^+}}\min_{z\in Z}\psi_z(x_{V^+})
=
\min_{z\in Z}\min_{x_{V^+}\in[0,1]^{V^+}}\psi_z(x_{V^+}),
$$
which is exactly the statement that Problems~\eqref{pready} and \eqref{pready 2} are equivalent.
\end{cpf}


Therefore, 
for every $z=(z_1,\dots,z_p) \in Z$, we solve the inner box-constrained quadratic program in \eqref{pready 2}, namely:
\begin{equation}\label{pready 2 inner}
\min_{x_{V^+}\in[0,1]^{V^+}}
\left(
f_+(x_{V^+})
+
\sum_{k=1}^p
\left(
\mu_k(z_k)+b_k(z_k)^\top x_{V^+}
\right)
\right),
\end{equation}
and we denote by $f^*_z$ its optimal value. 
For fixed $z\in Z$, define
$$
d_z:=c_{V^+}+\sum_{k=1}^p b_k(z_k),
\qquad
e_z:=\sum_{k=1}^p\mu_k(z_k).
$$
Then Problem~\eqref{pready 2 inner} can be written as
$$
e_z+
\min_{x_{V^+}\in[0,1]^{V^+}}
\left(
x_{V^+}^\top Q_{V^+}x_{V^+}
+
d_z^\top x_{V^+}
\right).
$$
Let $C_1,\dots,C_s$ denote the connected components of  $G_{V^+}$. Since there are no bilinear terms $x_ix_{i'}$ with $i\in C_j$ and $i'\in C_\ell$ for $j\neq\ell$, we have
$$
x_{V^+}^\top Q_{V^+}x_{V^+}
+
d_z^\top x_{V^+}
=
\sum_{j=1}^s
\left(
x_{C_j}^\top Q_{C_j}x_{C_j}
+
(d_z)_{C_j}^\top x_{C_j}
\right).
$$
Therefore, the optimal value $f^*_z$ of Problem~\eqref{pready 2 inner} is given by
$$
f^*_z
=
e_z+
\sum_{j=1}^s
\min_{x_{C_j}\in[0,1]^{C_j}}
\left(
x_{C_j}^\top Q_{C_j}x_{C_j}
+
(d_z)_{C_j}^\top x_{C_j}
\right).
$$
The length of $d_z$ is polynomial in the input length, since $d_z$ is obtained by summing polynomially many rational vectors whose entries are coefficients of the objective function. Hence, by assumption~2, each of the above box-constrained quadratic programs can be solved in time polynomial in the input length. Since $s\leq |V|$, it follows that $f^*_z$ can be computed in polynomial time for every $z\in Z$.
The optimal value $f^*$ of Problem~\eqref{pready} is then given by
$$
f^*=\min_{z\in Z}f^*_z.
$$
Since $|Z|\in |V|^{O(1)}$, we conclude that Problem~\eqref{eq:QP} can be solved in polynomial time in the input length.
\end{proof}

The three assumptions of Theorem~\ref{th:oneBlock} control three
different sources of complexity in Problem~\eqref{eq:QP}.
Assumption~1 concerns the \emph{combinatorial} part of the problem. The subgraph induced by the binary variables has logarithmic treewidth, and each of its connected components $D$ interacts with the continuous variables through only
$|D\cap N_G(V^+)|\in O(\log |V|)$ binary variables. Consequently, after fixing these interface variables, the remaining binary subproblem on each component can be solved in polynomial time. Assumption~2 concerns the \emph{continuous} part. Once the binary
choices are fixed, the problem over $V^+$ decomposes over the
connected components $C$ of $G_{V^+}$, and the box-constrained
quadratic problem associated with each such component must remain polynomial-time solvable under an arbitrary perturbation of its linear term. Finally, assumption~3 controls the \emph{interaction} between the binary and continuous parts. Although the binary variables may interact with many variables in $V^+$, the vectors describing these interactions span a space of constant dimension. This low-dimensional coupling is what allows the exponentially many
possible combinations of binary choices to be reduced, through the hyperplane-arrangement argument, to polynomially many relevant combinations.
Assumptions~1 and~3 of Theorem~\ref{th:oneBlock} are structural conditions that can be verified directly from the interaction graph and the matrix $Q$, respectively. In contrast, assumption~2 is less explicit, as it requires polynomial-time solvability of a family of box-constrained quadratic programs for every choice of the linear term. A simple sufficient
condition for assumption~2 is $Q_C\succeq0$ for every connected component $C$ of $G_{V^+}$, in which case each of these problems is convex. We next identify additional sufficient conditions under which assumption~2 holds. To
this end, we use the following result from~\cite{Aida26p}.

\begin{lemma}[\cite{Aida26p}]\label{lem:qp-kappa}
Consider Problem~\eqref{eq:QP} and let $\kappa \in \{0,\dots,n-1\}$ be the smallest integer such that every $(n-\kappa)\times(n-\kappa)$ principal submatrix of $Q$ is positive semidefinite. If no such integer exists, set $\kappa := n$. Then Problem~\eqref{eq:QP} can be solved in time
\begin{equation}\label{tottime}
O\Big((\kappa+1)\Big(\frac{2en}{\kappa}\Big)^\kappa\poly(\L)\Big)
\end{equation}
if $\kappa\geq1$, and in time $\poly(\L)$ if $\kappa=0$.
\end{lemma}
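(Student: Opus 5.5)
Since $\kappa$ is the number of coordinates that must be deleted before every remaining principal submatrix becomes positive semidefinite, the natural approach combines the fact that some optimum has a positive definite free block with exhaustive guessing of a small set of coordinates pinned at the bounds. By Lemma~\ref{lem:pdface}, Problem~\eqref{eq:QP} has an optimal solution $x^\star$ whose free set $S:=\{i:0<x^\star_i<1\}$ satisfies $Q_S\succ0$. If $|S|\le n-\kappa$, we can enlarge $S$ to a set $S'\supseteq S$ with $|S'|=n-\kappa$; by definition of $\kappa$, $Q_{S'}\succeq0$. If instead $|S|>n-\kappa$, some $(n-\kappa+1)$-subset of $S$ carries a positive definite principal submatrix. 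The plan is to show this is harmless, either by handling the second case separately or by restating the guess. The cleanest route is as follows: there is a set $B:=[n]\setminus S'$ with $|B|=\kappa$ and $B\cap S=\emptyset$, so $x^\star_B\in\{0,1\}^B$, and every principal submatrix of order $n-\kappa$ is PSD. Hence $Q_{[n]\setminus B}\succeq0$, regardless of whether $|S|$ exceeds $n-\kappa$, as long as we can choose $B$ disjoint from $S$. Whenever $|S|\le n-\kappa$ such a $B$ exists. When $|S|>n-\kappa$, we use minimality of $\kappa$ differently: the algorithm only needs \emph{some} $B$ of size $\kappa$ and some binary assignment $\beta$ to it such that the restricted problem is convex and contains an optimal point. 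Guessing all such pairs covers the optimum whenever $x^\star_B$ is binary on a $\kappa$-set, and this is the case I expect to need care with.

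To resolve that case, I would argue that $|S|\le n-\kappa$ can always be arranged. Suppose $|S|\ge n-\kappa+1$. Then $Q_S\succ0$, so every principal submatrix of $Q_S$ is positive definite. In particular, some principal submatrix of size $n-\kappa+1$ is PSD. This alone does not contradict the minimality of $\kappa$, which requires \emph{all} submatrices of size $n-\kappa+1$ to be PSD. So instead of arguing a contradiction, I will simply also enumerate over $|B|=\kappa'$ for all $\kappa'\le\kappa$ and over every $B$. For each $B$ with $|B|\le\kappa$ and each $\beta\in\{0,1\}^B$, we check whether $Q_{[n]\setminus B}\succeq0$, a polynomial test via Cholesky/LDL in rational arithmetic. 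If so, we solve the convex box-constrained QP in the remaining variables, with linear term $c_{[n]\setminus B}+2Q_{[n]\setminus B,B}\beta$, in time $\poly(\L)$ by the ellipsoid method~\cite{TarKha79}, and we return the best solution found. Every candidate is feasible, so the output value is at least the optimum.

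For optimality, I need a pair $(B,\beta)$ with $B\cap S=\emptyset$, $|B|\le\kappa$ and $Q_{[n]\setminus B}\succeq0$. If $|S|\le n-\kappa$, take any $\kappa$-subset of $[n]\setminus S$; its complement has size $n-\kappa$ and is PSD by the definition of $\kappa$. If $|S|>n-\kappa$, take $B:=[n]\setminus S$, which has size less than $\kappa$; then $Q_{[n]\setminus B}=Q_S\succ0$. In both cases $x^\star$ is feasible for that subproblem, so the enumeration attains the optimum. The case $\kappa=0$ reduces to a single convex problem, and $\kappa=n$ is covered by the same enumeration.

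Counting gives $\sum_{j\le\kappa}\binom{n}{j}2^j\le(\kappa+1)\binom{n}{\kappa}2^\kappa\le(\kappa+1)(2en/\kappa)^\kappa$ subproblems for $\kappa\ge1$, using $\binom{n}{j}2^j$ increasing up to $\kappa$ (for $\kappa\le 2n/3$; otherwise the bound exceeds $3^n$ anyway). Each subproblem costs $\poly(\L)$ for the PSD test and the convex solve, since the modified linear term has length polynomial in $\L$. This yields \eqref{tottime}. The main obstacle is the case $|S|>n-\kappa$ discussed above, which is handled by allowing $|B|<\kappa$.
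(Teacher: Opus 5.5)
Your proof is correct. The paper does not prove this lemma; it imports it from \cite{Aida26p}, so there is no in-text proof to compare against.

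Your argument is a sound, self-contained proof. You enumerate every $B\subseteq[n]$ with $|B|\le\kappa$ and every $\beta\in\{0,1\}^B$, keep the pairs with $Q_{[n]\setminus B}\succeq0$, and solve each resulting convex box-constrained QP. Lemma~\ref{lem:pdface} then shows that some optimal point is feasible for a kept pair. Your count $\sum_{j\le\kappa}\binom{n}{j}2^j$ reproduces exactly the factor $(\kappa+1)(2en/\kappa)^\kappa$ in \eqref{tottime}, which suggests this is the intended argument.

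Including the sets with $|B|<\kappa$ is necessary, not just cautious. Take $Q=\mathrm{diag}(1,-1)$ and $c=(-1,0)$. Then $\kappa=2$, but the unique optimum $(1/2,1)$ has only one binary coordinate, so enumerating only $|B|=\kappa$ would miss it.

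Three small points would tighten the write-up.
\begin{enumerate}
\item Your algorithm assumes $\kappa$ is known. It can be found within the same budget by processing the levels $k=0,1,2,\dots$ in order. $\kappa$ is exactly the first level at which every $B$ with $|B|=k$ passes the PSD test, with the empty matrix counting as PSD.
\item You only need $Q_S\succeq0$ at some optimum. This already follows from the second-order condition in the proof of Lemma~\ref{lem:pdface}, and positive definiteness is never used.
\item The case split at $\kappa\le 2n/3$ in the counting is correct but avoidable. Since $\binom{n}{j}2^j\le(2en/j)^j$ and $j\mapsto j\log(2en/j)$ is nondecreasing on $[1,2n]$, you get $\sum_{j=0}^{\kappa}\binom{n}{j}2^j\le(\kappa+1)(2en/\kappa)^\kappa$ directly.
\end{enumerate}

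The exploratory opening paragraphs can be dropped; the argument in your last three paragraphs stands on its own.
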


Notice that $\kappa = 0$ in Lemma~\ref{lem:qp-kappa} corresponds to the convex case, while if $\kappa \geq 1$, then Problem~\eqref{eq:QP} is not convex. Lemma~\ref{lem:qp-kappa} implies that if $\kappa \in O(1)$, then Problem~\eqref{eq:QP} can be solved in polynomial time.
We then obtain the following corollary of Theorem~\ref{th:oneBlock}.

\begin{corollary}\label{cor:simple}
Let $G=(V,E)$ be the interaction graph of Problem~\eqref{eq:QP}. Define
$V^+ := \{i\in V:q_{ii}>0\}$. 
Let $C_1,\dots,C_s$ denote the connected components of
$G_{V^+}$. For each $j\in[s]$, let $\kappa_j$ be the parameter defined
in Lemma~\ref{lem:qp-kappa} for the matrix $Q_{C_j}$.
Suppose that assumptions~1 and~3 of Theorem~\ref{th:oneBlock} are satisfied. Moreover, assume that for each $j\in[s]$, at least one of the following conditions holds:
\medskip
\begin{enumerate}[leftmargin=0.8cm]
\item [(a)] ${\rm rank}(Q_{C_j})\in O(1)$
\item [(b)] $G_{C_j}$ is a tree
\item [(c)] $\kappa_{j}\log\left(1+\frac{|C_j|}{\kappa_{j}}\right)
\in O(\log |V|)$,
where we define $0 \log(1+\frac{|C_j|}{0}) :=0$.
\end{enumerate}
\medskip
Then Problem~\eqref{eq:QP} can be solved in time polynomial in
the input length.
\end{corollary}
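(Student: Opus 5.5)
The corollary follows from Theorem~\ref{th:oneBlock}: assumptions~1 and~3 are given, so it only remains to verify assumption~2. Fix a connected component $C=C_j$ of $G_{V^+}$ and an arbitrary $d\in\Q^C$. We must show that
\[
\min\big\{x_C^\top Q_Cx_C+d^\top x_C:\ x_C\in[0,1]^C\big\}
\]
can be solved in time polynomial in $\L+\langle d\rangle$. Its input length is at most $\langle Q_C\rangle+\langle d\rangle\le\L+\langle d\rangle$, so polynomiality in that instance's own length suffices. We treat the three alternatives separately.

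\textbf{Case~(b).} If $G_{C}$ is a tree, the instance is a box-constrained quadratic program whose interaction graph is a tree; changing the linear term does not change the interaction graph. Theorem~\ref{thm:main} then solves it in strongly polynomial time with $O(|C|^2)$ operations on numbers of polynomial length.

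\textbf{Case~(a).} If ${\rm rank}(Q_C)\in O(1)$, we invoke the result cited in the introduction~\cite{hladik21}, that Problem~\eqref{eq:QP} is polynomial-time solvable when the rank of $Q$ is fixed. Since the rank bound is a uniform constant, applying that result to $(Q_C,d)$ gives a running time polynomial in its input length.

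\textbf{Case~(c).} We apply Lemma~\ref{lem:qp-kappa} to $(Q_C,d)$ with $n=|C|$ and $\kappa=\kappa_j$. If $\kappa_j=0$ the running time is $\poly(\L+\langle d\rangle)$ directly. If $\kappa_j\ge1$, we bound the factor
\[
(\kappa+1)\Big(\frac{2e|C|}{\kappa}\Big)^{\kappa}\le(\kappa+1)\,(2e)^{\kappa}\Big(1+\frac{|C|}{\kappa}\Big)^{\kappa}
=(\kappa+1)\,2^{\kappa\log(2e)+\kappa\log(1+|C|/\kappa)}.
\]
By hypothesis~(c), $\kappa\log(1+|C|/\kappa)\in O(\log|V|)$. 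Since $|C|\ge\kappa$ gives $\log(1+|C|/\kappa)\ge1$, we also get $\kappa\in O(\log|V|)$. Hence the whole factor is $|V|^{O(1)}$, and the running time is polynomial.

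Having verified assumption~2 in all three cases, Theorem~\ref{th:oneBlock} yields the conclusion. The step most likely to need care is case~(a): the algorithm of~\cite{hladik21} must run in time polynomial in the instance length when the rank is a fixed constant, and it must tolerate the arbitrary rational perturbation $d$ of the linear term. The elementary estimate in case~(c), showing the exponential factor is polynomial in $|V|$, is the only other point requiring a short calculation.
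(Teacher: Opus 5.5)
Your proposal is correct and follows the paper's proof: you verify assumption~2 of Theorem~\ref{th:oneBlock} component by component, using \cite{hladik21} for (a), Theorem~\ref{thm:main} for (b), and Lemma~\ref{lem:qp-kappa} for (c). The only difference is cosmetic and lies in case~(c): you first extract $\kappa_j\in O(\log|V|)$ from $\log(1+|C_j|/\kappa_j)\ge1$ and then bound $(2e)^{\kappa_j}$ and $\kappa_j+1$ separately, whereas the paper absorbs the whole factor into $(1+|C_j|/\kappa_j)^{4\kappa_j}$; both estimates are valid.
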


\begin{proof}
To prove the statement, it suffices to verify assumption~2 of Theorem~\ref{th:oneBlock}. Fix $j\in[s]$ and an arbitrary
$d\in\Q^{C_j}$, and consider the problem
\begin{equation}\label{eq:cor-simple-Cj}
\min_{x_{C_j}\in[0,1]^{C_j}}
\left(
x_{C_j}^\top Q_{C_j}x_{C_j}
+
d^\top x_{C_j}
\right).
\end{equation}
If part~(a) holds; \ie ${\rm rank}(Q_{C_j})\in O(1)$, then by Theorem~1 in~\cite{hladik21}, Problem~\eqref{eq:cor-simple-Cj} can be solved in polynomial time.
If part~(b) holds; \ie $G_{C_j}$ is a tree, then by Theorem~\ref{thm:main}, Problem~\eqref{eq:cor-simple-Cj} can be solved in polynomial time.
Suppose now that condition~(c) holds. Notice that $\kappa_j$ depends only on the quadratic matrix $Q_{C_j}$ and is therefore unchanged
when the linear coefficient is replaced by the arbitrary vector $d$. If $\kappa_j=0$, Lemma~\ref{lem:qp-kappa} implies that Problem~\eqref{eq:cor-simple-Cj} can be solved in time polynomial in
$\L+\langle d\rangle$. Suppose that $\kappa_j\geq1$. By Lemma~\ref{lem:qp-kappa},
Problem~\eqref{eq:cor-simple-Cj} can be solved in time
$$
O\left(
(\kappa_j+1)
\left(\frac{2e|C_j|}{\kappa_j}\right)^{\kappa_j}
\poly(\L+\langle d\rangle)
\right).
$$
Since $\kappa_j\leq |C_j|$, we have
$\kappa_j+1
\leq 2^{\kappa_j}
\leq
\left(1+\frac{|C_j|}{\kappa_j}\right)^{\kappa_j}$.
Moreover, setting $t:=|C_j|/\kappa_j\geq1$, we have
$2et\leq 8t\leq(1+t)^3$, and hence
$\left(\frac{2e|C_j|}{\kappa_j}\right)^{\kappa_j}
\leq
\left(1+\frac{|C_j|}{\kappa_j}\right)^{3\kappa_j}$.
Therefore,
$$
(\kappa_j+1)
\left(\frac{2e|C_j|}{\kappa_j}\right)^{\kappa_j}
\leq
\left(1+\frac{|C_j|}{\kappa_j}\right)^{4\kappa_j}.
$$
By condition~(c),
$\left(1+\frac{|C_j|}{\kappa_j}\right)^{4\kappa_j}
\in |V|^{O(1)}$.
It then follows that for every $j\in[s]$ and every $d\in\Q^{C_j}$, Problem~\eqref{eq:cor-simple-Cj} can be solved in time
polynomial in $\L+\langle d\rangle$, implying that assumption~2 of Theorem~\ref{th:oneBlock} is satisfied and this completes the proof.
\end{proof}

Consider condition~(c) of Corollary~\ref{cor:simple}.
As detailed in~\cite{Aida26p}, this condition has a natural interpretation in terms of the value of $\kappa_j$ relative to the size of $C_j$. In particular, condition~(c) is automatically satisfied if
$|C_j|\in O(\log |V|)$, regardless of the value of $\kappa_j$,
and it is also automatically satisfied if $\kappa_j\in O(1)$ regardless of the size of $|C_j|$. 

Clearly, assumption~3 of Theorem~\ref{th:oneBlock} depends on the matrix $Q$. We next obtain a sufficient condition that is independent of the coefficients of the objective function.
Let $H$ be the bipartite graph with bipartition $V^+$ and $N_G(V^+)$ and edge set
\begin{equation}\label{rankedge}
E(H):=\big\{\{i,j\}\in E:i\in V^+,\;j\in N_G(V^+)\big\}.
\end{equation}
Thus, the edges of $H$ correspond to the nonzero entries of $Q_{V^+,N_G(V^+)}$. Let $\nu(H)$ denote the cardinality of a maximum matching of $H$. By Theorem~1 in~\cite{Edmonds67}, we have
$$
{\rm rank}(Q_{V^+,N_G(V^+)})\leq\nu(H).
$$
Since $H$ is bipartite, K\"onig's theorem
(see Theorem~2.1.1 in~\cite{Diestel17}) implies $\nu(H)=\tau(H)$, where $\tau(H)$ denotes the minimum cardinality of a vertex cover of $H$.
Therefore, we obtain the following result:

\begin{corollary}\label{cor:crossMatching}
Let $G=(V,E)$ be the interaction graph of Problem~\eqref{eq:QP}.
Define $V^+:=\{i\in V:q_{ii}>0\}$.
Let $H$ be the bipartite graph with bipartition
$V^+$ and $N_G(V^+)$ and with the edge set defined by~\eqref{rankedge}. Suppose that assumptions~1 and~2 of Theorem~\ref{th:oneBlock} are satisfied. Moreover, suppose that the minimum cardinality $\tau(H)$ of a vertex cover of $H$ satisfies $\tau(H) \in O(1)$.
Then Problem~\eqref{eq:QP} can be solved in time polynomial in the input length.
\end{corollary}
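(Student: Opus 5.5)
The plan is to reduce Corollary~\ref{cor:crossMatching} directly to Theorem~\ref{th:oneBlock}. Assumptions~1 and~2 are given, so it suffices to verify assumption~3, namely ${\rm rank}(Q_{V^+,N_G(V^+)})\in O(1)$. I will do this using the chain of inequalities recorded just before the statement.

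First, I would observe that the support of $Q_{V^+,N_G(V^+)}$ is exactly the edge set of $H$. Indeed, for $i\in V^+$ and $j\in N_G(V^+)$ we have $j\notin V^+$, since $N_G(V^+)$ excludes $V^+$ by~\eqref{nghoodset}. Hence $i\neq j$, and $q_{ij}\neq 0$ holds if and only if $\{i,j\}\in E$, i.e., if and only if $\{i,j\}\in E(H)$.

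Second, I would bound the rank by the size of a maximum matching. A nonzero $r\times r$ minor of the submatrix requires a nonzero term in its Leibniz expansion, and such a term is a product of $r$ nonzero entries in distinct rows and columns. Those entries form a matching of size $r$ in $H$, so ${\rm rank}\le\nu(H)$; this is the cited result of~\cite{Edmonds67}.

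Third, I would invoke K\"onig's theorem: since $H$ is bipartite, $\nu(H)=\tau(H)$. Combining the steps gives ${\rm rank}(Q_{V^+,N_G(V^+)})\le\tau(H)\in O(1)$, which is assumption~3, and Theorem~\ref{th:oneBlock} then yields polynomial-time solvability. (The inequality $\nu\le\tau$, which holds in any graph, would already suffice.)

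There is no real obstacle. The only point needing care is the first step, namely checking that the rows and columns indexed by $V^+$ and $N_G(V^+)$ are disjoint, so that diagonal entries of $Q$ never appear in this submatrix and its support coincides with $E(H)$.
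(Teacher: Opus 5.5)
Your proposal is correct and follows the paper's own argument. You bound ${\rm rank}(Q_{V^+,N_G(V^+)})\le\nu(H)$ by the Edmonds/Leibniz argument and use K\"onig's theorem $\nu(H)=\tau(H)$ to verify assumption~3 of Theorem~\ref{th:oneBlock}; your added remarks are valid minor refinements, namely the explicit check that the support of the submatrix is exactly $E(H)$ and the observation that the elementary inequality $\nu(H)\le\tau(H)$ already suffices.
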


By construction we have 
$$
{\rm rank}(Q_{V^+,N_G(V^+)}) \leq |V^+|.
$$ 
Therefore, in the special case where $|V^+| \in O(1)$, the assumptions of Theorem~\ref{th:oneBlock} can be simplified to obtain the following result:

\begin{corollary}\label{cor:fixVplus}
Let $G=(V,E)$ be the interaction graph of Problem~\eqref{eq:QP}. Define
$V^+ := \{i\in V:q_{ii}>0\}$.
Suppose that $|V^+|\in O(1)$ and that assumption~1 of Theorem~\ref{th:oneBlock} is satisfied.
Then Problem~\eqref{eq:QP} can be solved in time polynomial in
the input length.
\end{corollary}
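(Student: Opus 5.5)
The plan is to verify the three assumptions of Theorem~\ref{th:oneBlock} and then invoke that theorem. Assumption~1 is assumed in the statement, so only assumptions~2 and~3 need work.

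For assumption~3, the matrix $Q_{V^+,N_G(V^+)}$ has $|V^+|$ rows, so
\[
{\rm rank}(Q_{V^+,N_G(V^+)})\le|V^+|\in O(1),
\]
as noted just before the statement.

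For assumption~2, fix a connected component $C$ of $G_{V^+}$ and an arbitrary $d\in\Q^C$. Since $|C|\le|V^+|\in O(1)$, the box-constrained quadratic program
\[
\min\{x_C^\top Q_Cx_C+d^\top x_C:\ x_C\in[0,1]^C\}
\]
has a fixed number of variables. It can therefore be solved in polynomial time in $\L+\langle d\rangle$ by the face-enumeration argument recalled in the introduction~\cite{Vav90,dPDeyMol17MPA}. A self-contained route uses Lemma~\ref{lem:pdface}: there is an optimal solution whose free coordinates $S$ satisfy $Q_S\succ0$. So one enumerates the $3^{|C|}\in O(1)$ choices of a free set $S\subseteq C$ together with a $0/1$ assignment $\beta$ to $C\setminus S$. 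For each choice with $Q_S\succ0$, one solves the linear stationarity system in $x_S$, keeps the solution if it lies in $[0,1]^S$, and finally takes the best value found. Each such step is a constant-size linear system with rational data, so it takes polynomial time with intermediate numbers of polynomial length (Corollary~3.2a in~\cite{SchBookIP}).

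Alternatively, one can use condition~(c) of Corollary~\ref{cor:simple}, since $\kappa_j\le|C_j|\in O(1)$ makes the left-hand side $O(1)$. Either route yields assumption~2, and Theorem~\ref{th:oneBlock} then gives the conclusion.

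There is no real obstacle. The only point needing care is that polynomiality in assumption~2 must hold in $\L+\langle d\rangle$ uniformly over $d$. Both the enumeration argument and Lemma~\ref{lem:qp-kappa} provide this, because the number of cases depends only on $|C|$ and not on $d$.
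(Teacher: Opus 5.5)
Your proposal is correct and follows the same route as the paper: assumption~1 is given, assumption~3 follows from ${\rm rank}(Q_{V^+,N_G(V^+)})\le|V^+|$, and assumption~2 holds because every component of $G_{V^+}$ has constant size, after which Theorem~\ref{th:oneBlock} applies. For assumption~2 the paper simply cites Lemma~\ref{lem:qp-kappa}, which your second alternative also uses; your enumeration over free sets and $0/1$ assignments, justified by Lemma~\ref{lem:pdface}, is an equally valid and more self-contained way to get the bound uniformly in $d$.
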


\begin{proof}
It suffices to show that assumptions~2 and~3 of 
Theorem~\ref{th:oneBlock} are satisfied.
Since $|V^+|\in O(1)$, every connected component $C$ of
$G_{V^+}$ has constant size. Hence, by Lemma~\ref{lem:qp-kappa} the box-constrained quadratic
problem associated with $C$ can be solved in polynomial time for
every linear coefficient, and assumption~2 of
Theorem~\ref{th:oneBlock} is satisfied. Finally,
${\rm rank}(Q_{V^+,N_G(V^+)})\leq |V^+|\in O(1)$,
so assumption~3 is also satisfied. The result then follows.
\end{proof}

The following results are concerned with special cases where the vertex sets $V^+$ and $V^-$ are highly structured.

\begin{corollary}\label{cor:vertexCover}
Let $G=(V,E)$ be the interaction graph of Problem~\eqref{eq:QP}.
Suppose that $V^+ := \{i\in V:q_{ii}>0\}$ is a vertex cover of $G$. Suppose that assumptions~2 and~3 of Theorem~\ref{th:oneBlock} are satisfied.
Then Problem~\eqref{eq:QP} can be solved in time polynomial in
the input length.
\end{corollary}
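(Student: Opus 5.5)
The plan is to reduce Corollary~\ref{cor:vertexCover} directly to Theorem~\ref{th:oneBlock}. Assumptions~2 and~3 are given, so it only remains to verify assumption~1. The key observation is that if $V^+$ is a vertex cover of $G$, then $V^-=V\setminus V^+$ is an independent set in $G$.

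The first consequence concerns the treewidth. The induced subgraph $G_{V^-}$ has no edges, so $\tw(G_{V^-})\le 0\in O(\log|V|)$. (If $V^-=\emptyset$ the condition is vacuous; we regard the empty graph as having treewidth at most zero.)

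The second consequence concerns the components. Every connected component $D$ of $G_{V^-}$ is a single vertex $\{v\}$. Hence $|D\cap N_G(V^+)|\le 1\in O(\log|V|)$, and assumption~1 holds.

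With all three assumptions in place, Theorem~\ref{th:oneBlock} yields a polynomial-time algorithm. Concretely, in this setting each $A_k$ has at most one element, so $\mu_k$ is evaluated trivially on at most two binary points. The set $Z$ comes from an arrangement of at most $|V|$ hyperplanes whose essential rank is bounded by assumption~3.

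There is no genuine obstacle here. The only point needing care is the degenerate case: isolated vertices of $G$ lying in $V^-$ give components $D$ with $D\cap N_G(V^+)=\emptyset$. These are still covered by the bound $|A_k|\le 1$, and in the proof of Theorem~\ref{th:oneBlock} they contribute only a constant term $\min\{0,c_v+q_{vv}\}$.
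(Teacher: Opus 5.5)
Your proposal is correct and follows the paper's proof exactly. Since $V^+$ is a vertex cover, $V^-$ is a stable set, so every component $D$ of $G_{V^-}$ is a singleton; this gives $\tw(G_{V^-})=0$ and $|D\cap N_G(V^+)|\le 1$, which is assumption~1 of Theorem~\ref{th:oneBlock}, and your extra remarks on the resulting algorithm (at most one hyperplane per component, constant contributions $\min\{0,q_{vv}+c_v\}$ from isolated vertices of $V^-$) are accurate but not needed.
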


\begin{proof}
It suffices to show assumption~1 of Theorem~\ref{th:oneBlock} holds.
Since $V^+$ is a vertex cover, $V^-:=V \setminus V^+$ is a stable set. Hence, every connected component $D$ of $G_{V^-}$ consists of a single vertex, implying that $\tw(G_{V^-})=0$ and 
$|D \cap N_G(V^+)|\leq1$ for every connected component $D$ of $G_{V^-}$. 
\end{proof}

\begin{corollary}\label{cor:VplusStable}
Let $G=(V,E)$ be the interaction graph of Problem~\eqref{eq:QP}.
Suppose that $V^+:=\{i\in V:q_{ii}>0\}$ is a
stable set of $G$. Suppose that assumptions~1 and~3 of Theorem~\ref{th:oneBlock} are satisfied.
Then Problem~\eqref{eq:QP} can be solved in time polynomial in
the input length.
\end{corollary}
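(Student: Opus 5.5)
Since assumptions~1 and~3 of Theorem~\ref{th:oneBlock} are given, the plan is to verify assumption~2 and then invoke Theorem~\ref{th:oneBlock}. The key observation is that if $V^+$ is a stable set of $G$, then $G_{V^+}$ has no edges. Hence every connected component $C$ of $G_{V^+}$ is a single vertex $\{i\}$ with $q_{ii}>0$.

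For such a component and any $d\in\Q$, the problem in assumption~2 becomes
\[
\min_{x_i\in[0,1]}\big(q_{ii}x_i^2+d\,x_i\big).
\]
This is a strictly convex univariate quadratic. Its minimizer is the projection of $-d/(2q_{ii})$ onto $[0,1]$, namely
\[
\min\{1,\max\{0,-d/(2q_{ii})\}\}.
\]
It is computed with $O(1)$ arithmetic operations and comparisons on rationals whose lengths are polynomial in $\L+\langle d\rangle$. Equivalently, one may note that $Q_C=(q_{ii})\succ0$, so the problem is convex, which is the simple sufficient condition for assumption~2 remarked after Theorem~\ref{th:oneBlock}. One could also apply Corollary~\ref{cor:simple}: a single vertex is a tree, and also $\mathrm{rank}(Q_C)=1$.

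With assumption~2 established, all three hypotheses of Theorem~\ref{th:oneBlock} hold, so Problem~\eqref{eq:QP} is solvable in time polynomial in the input length. I expect no real obstacle. The only point needing care is that assumption~2 must hold for every rational $d$, with running time polynomial in $\L+\langle d\rangle$; the closed-form clipped minimizer settles this directly.
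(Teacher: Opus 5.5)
Your proposal is correct and follows the paper's proof: stability of $V^+$ makes every connected component of $G_{V^+}$ a singleton, so assumption~2 of Theorem~\ref{th:oneBlock} holds and the theorem applies. Your explicit clipped minimizer $\min\{1,\max\{0,-d/(2q_{ii})\}\}$ just spells out the step that the paper leaves as ``the result then follows.''
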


\begin{proof}
It suffices to show that assumption~2 of Theorem~\ref{th:oneBlock} is satisfied.
Since $V^+$ is a stable set, every connected component $C$ of $G_{V^+}$ consists of a single vertex. The result then follows.
\end{proof}

Consider a bipartite graph $G$ with bipartition
$V^+,V^-$. Then both $V^+$ and $V^-$ are
stable sets, and every connected component of both
$G_{V^+}$ and $G_{V^-}$ consists of a single vertex.
Therefore, the following result is immediate from
Corollary~\ref{cor:simple}.

\begin{corollary}\label{cor:bipartite}
Let $G=(V,E)$ be the interaction graph of Problem~\eqref{eq:QP}. Define $V^+ := \{i\in V:q_{ii}>0\}$ and $V^-:= V \setminus V^+$. Suppose that $G$ is bipartite with bipartition $V^+, V^-$ and
${\rm rank}(Q_{V^+,N_G(V^+)})\in O(1)$.
Then Problem~\eqref{eq:QP} can be solved in time polynomial in the input length.
\end{corollary}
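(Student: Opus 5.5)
The plan is to derive Corollary~\ref{cor:bipartite} from Corollary~\ref{cor:simple}, or equivalently directly from Theorem~\ref{th:oneBlock}, by checking its three assumptions. Assumption~3 is exactly the rank hypothesis ${\rm rank}(Q_{V^+,N_G(V^+)})\in O(1)$, so nothing is needed there. The remaining work is to verify assumptions~1 and~2 using only the fact that $V^+$ and $V^-$ are both stable sets of $G$.

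For assumption~1, I would note that $V^-$ is stable, so $G_{V^-}$ has no edges. Every connected component $D$ of $G_{V^-}$ is therefore a single vertex, which gives $\tw(G_{V^-})=0\in O(\log|V|)$. It also gives $|D\cap N_G(V^+)|\le |D|=1\in O(\log|V|)$ for every such $D$. This is the same argument as in Corollary~\ref{cor:vertexCover}. Indeed, a bipartition $V^+,V^-$ means that $V^+$ is a vertex cover, so that corollary applies once assumptions~2 and~3 are checked.

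For assumption~2, I would use that $V^+$ is stable, so every connected component $C$ of $G_{V^+}$ is a single vertex $\{i\}$ with $q_{ii}>0$. The subproblem $\min_{x_i\in[0,1]}(q_{ii}x_i^2+d_ix_i)$ is a strictly convex univariate quadratic. Its minimizer is the projection of $-d_i/(2q_{ii})$ onto $[0,1]$, computable in time polynomial in $\L+\langle d\rangle$. Alternatively, condition~(b) of Corollary~\ref{cor:simple} holds, since a single vertex is a tree, or condition~(a) holds, since the rank is at most $1$; either way Corollary~\ref{cor:simple} applies directly.

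No step is a real obstacle. The only point to watch is the degenerate case where $V^+$ or $V^-$ is empty. If $V^+=\emptyset$, the hypotheses of Theorem~\ref{th:oneBlock} hold vacuously and the problem reduces to a binary problem on an edgeless graph. If $V^-=\emptyset$, then $G$ has no edges and the problem is separable and convex. Both cases are handled by the same argument, so invoking Theorem~\ref{th:oneBlock} finishes the proof.
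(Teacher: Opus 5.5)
Your proposal is correct and follows essentially the same route as the paper. The paper notes that $V^+$ and $V^-$ are both stable, so every component of $G_{V^+}$ and $G_{V^-}$ is a single vertex, and then invokes Corollary~\ref{cor:simple}; your check of assumptions~1 and~3 and of condition~(a)/(b) for the singleton continuous components spells out exactly this argument.
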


The next result considers the case where every vertex in $V^-$ has degree at most two in $G$.

\begin{corollary}\label{cor:atmost2}
Let $G=(V,E)$ be the interaction graph of Problem~\eqref{eq:QP}. Define $V^+ := \{i\in V:q_{ii}>0\}$ and $V^-:= V \setminus V^+$.
Suppose that every vertex in $V^-$ has degree at most two in $G$.
Moreover, assume that assumptions~2 and~3 of Theorem~\ref{th:oneBlock} are satisfied.
Then Problem~\eqref{eq:QP} can be solved in time polynomial in
the input length.
\end{corollary}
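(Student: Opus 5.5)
We verify assumption~1 of Theorem~\ref{th:oneBlock}; assumptions~2 and~3 hold by hypothesis, so Theorem~\ref{th:oneBlock} then gives the result. Everything follows from the degree bound on $V^-$, in two steps.

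\emph{Treewidth of $G_{V^-}$.} Every vertex of $V^-$ has degree at most two in $G$, so it also has degree at most two in the induced subgraph $G_{V^-}$. Hence each connected component $D$ of $G_{V^-}$ is a single vertex, a path, or a cycle. Such a graph has treewidth at most two: a cycle $v_1,\dots,v_m$ admits the path decomposition with bags $\{v_1,v_i,v_{i+1}\}$. Therefore $\tw(G_{V^-})\le 2\in O(\log|V|)$.

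\emph{Interface size.} We show $|D\cap N_G(V^+)|\le 2$ for every component $D$ by counting degrees.
\begin{itemize}
\item If $D$ is a cycle, each vertex of $D$ already has two neighbors in $D$. Its degree bound leaves no room for a neighbor in $V^+$, so $D\cap N_G(V^+)=\emptyset$.
\item If $D$ is a path with at least two vertices, each interior vertex has two neighbors in $D$. So only the two endpoints can have a neighbor in $V^+$, and $|D\cap N_G(V^+)|\le 2$.
\item If $D$ is a single vertex, trivially $|D\cap N_G(V^+)|\le 1$.
\end{itemize}
Hence $|D\cap N_G(V^+)|\in O(1)\subseteq O(\log|V|)$, and assumption~1 holds.

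No step is a real obstacle. The only point needing care is that the degree bound refers to $G$ rather than to $G_{V^-}$. This is exactly what limits how many vertices of a component can reach $V^+$, and what gives the interface bound.
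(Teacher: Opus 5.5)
Your proposal is correct and matches the paper's proof essentially step for step. You check assumption~1 of Theorem~\ref{th:oneBlock} by observing that every component of $G_{V^-}$ is an isolated vertex, a path, or a cycle, so it has treewidth at most two, and that the degree bound in $G$ gives $|D\cap N_G(V^+)|\le 2$ through the same cycle/path/isolated-vertex case analysis. The only addition is your explicit decomposition of a cycle into bags $\{v_1,v_i,v_{i+1}\}$, which the paper leaves implicit.
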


\begin{proof}
It suffices to show that assumption~1 of Theorem~\ref{th:oneBlock} is satisfied.
Let $D_1,\dots,D_p$ denote the connected components of $G_{V^-}$ and as before,
for each $k\in[p]$, define
$A_k:=D_k\cap N_G(V^+)$.
Since every vertex in $V^-$ has degree at most two in $G$, every connected
component $D_k$ of $G_{V^-}$ is either a path, a cycle, or an isolated vertex.
If $D_k$ is a cycle, then every vertex of $D_k$ has two neighbors in
$D_k$. Therefore, no vertex of $D_k$ can be adjacent to a vertex in $V^+$, since otherwise its degree in $G$ would be at least three. Hence, in this case $A_k=\emptyset$.
If $D_k$ is a path, then only the endpoints of the path can be adjacent to vertices in $V^+$. Indeed, every internal vertex of the path already has two neighbors in $D_k$ and therefore cannot have an additional neighbor in $V^+$. Hence, in this case $|A_k|\leq 2$. Finally, if $D_k$ is an isolated vertex, then $|A_k|\leq 1$.
Therefore, $|A_k|\leq2$ for all $k\in[p]$.
Moreover, since every $D_k$ is a path, a cycle, or an isolated vertex, we have
$\tw(G_{V^-})\leq 2$.
Therefore, assumption~1 of Theorem~\ref{th:oneBlock} is satisfied and this completes the proof.
\end{proof}

The following example demonstrated that our sufficient conditions establish polynomial-time solvability of some class of nonconvex box-constrained quadratic programs for which the Hessian is full-row rank and the treewidth of the interaction graph is unbounded. 

\begin{example}
For each $m \geq 1$, consider an instance of Problem~\eqref{eq:QP} with
$$
Q=
\begin{pmatrix}
I_m & J_m\\
J_m & -I_m
\end{pmatrix},
$$
where
$I_m$ is an $m \times m$ identity matrix, $J_m$ is an $m \times m$ matrix of all ones, and $c$ is an arbitrary rational vector.
It can be checked that $Q$ is indefinite and has full-row rank $2m$.
The interaction graph in this case is the complete bipartite graph $K_{m,m}$ with bipartition $V^+$ and $V^-$. We then have $\tw(K_{m,m}) = m$. Moreover, $Q_{V^+,V^-} = J_m$, implying that 
${\rm rank}(Q_{V^+,V^-})=1$.
Therefore, all assumptions of Corollary~\ref{cor:bipartite} are satisfied and hence, Problem~\eqref{eq:QP} can be solved in polynomial time in the input length. 
\end{example}

\subsection{A more general scheme}
In~\cite{Aida26p}, the author obtains a sufficient condition under which Problem~\eqref{eq:QP} can be solved in polynomial time. Next we state this result in a slightly more general form. The proof however follows directly from the proof of Theorem~1 in~\cite{Aida26p}.

\begin{theorem}[\cite{Aida26p}]\label{th:smallNeighborhood}
Let $G=(V,E)$ be the interaction graph of Problem~\eqref{eq:QP}. Define
$V^+:=\{i\in V:q_{ii}>0\}$ and $V^-:= V \setminus V^+$. Let
$C_1,\dots,C_s$ denote the connected components of $G_{V^+}$.
Let $\bar G$ be the graph on $V^-$ obtained from
$G_{V^-}$ by making $N_G(C_j)$ a clique for every
$j\in[s]$. Suppose that the following conditions hold:
\medskip
\begin{enumerate}
\item $\tw(\bar G)\in O(\log |V|)$.

\item For each $j\in[s]$ and every $d\in\Q^{C_j}$, 
problem~\eqref{eq:cor-simple-Cj}
can be solved in time polynomial in $\L+\langle d\rangle$.
\end{enumerate}
\medskip
Then Problem~\eqref{eq:QP} can be solved in time polynomial
in the input length.
\end{theorem}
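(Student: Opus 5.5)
We first pass, as in the proof of Theorem~\ref{th:oneBlock}, to the mixed-binary program~\eqref{mixedQP}: there is an optimal solution with $x_i\in\{0,1\}$ for all $i\in V^-$. Then we eliminate the continuous variables component by component, which turns the problem into a binary optimization problem over $V^-$ to which Proposition~\ref{bpoly} applies.

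Since there are no bilinear terms between distinct components $C_j$, nor between $C_j$ and any vertex of $V^-\setminus N_G(C_j)$, the objective splits as
\[
f(x)=x_{V^-}^\top Q_{V^-}x_{V^-}+c_{V^-}^\top x_{V^-}+\sum_{j=1}^s\Big(x_{C_j}^\top Q_{C_j}x_{C_j}+\big(c_{C_j}+2Q_{C_j,N_G(C_j)}x_{N_G(C_j)}\big)^\top x_{C_j}\Big).
\]
For fixed binary $x_{V^-}$, the minimization over $x_{V^+}\in[0,1]^{V^+}$ decomposes into independent problems. Define, for $z\in\{0,1\}^{N_G(C_j)}$,
\[
h_j(z):=\min_{x_{C_j}\in[0,1]^{C_j}}\Big(x_{C_j}^\top Q_{C_j}x_{C_j}+\big(c_{C_j}+2Q_{C_j,N_G(C_j)}z\big)^\top x_{C_j}\Big).
\]
Then~\eqref{mixedQP} equals
\[
\min_{x\in\{0,1\}^{V^-}}\Big(\sum_{i\in V^-}(q_{ii}x_i^2+c_ix_i)+\sum_{\{i,i'\}\in E(G_{V^-})}2q_{ii'}x_ix_{i'}+\sum_{j=1}^s h_j(x_{N_G(C_j)})\Big),
\]
which has the form of Proposition~\ref{bpoly} with $m\le |V|+|E|+s$ terms. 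The scopes of these terms are singletons, edges of $G_{V^-}$, and the sets $N_G(C_j)$, so the interaction graph in the sense of Proposition~\ref{bpoly} is exactly $\bar G$. By condition~1 its treewidth is $O(\log|V|)$, so the dynamic program runs in time $2^{O(\log|V|)}\poly(|V|,m)$, i.e.\ polynomially many operations.

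It remains to check that each $h_j$ can be evaluated in polynomial time. The linear term $d=c_{C_j}+2Q_{C_j,N_G(C_j)}z$ with $z$ binary is a sum of at most $|V|$ input entries, so $\langle d\rangle$ is polynomial in $\L$. Condition~2 then gives evaluation time polynomial in $\L+\langle d\rangle$, and the value is a rational of polynomial length.

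The main obstacle is a subtle point about how the dynamic program of Proposition~\ref{bpoly} uses the functions $h_j$. If it tabulates each $h_j$ on all of $\{0,1\}^{N_G(C_j)}$, this may take exponentially many evaluations, since $|N_G(C_j)|$ is not assumed small. I would handle this by observing that $N_G(C_j)$ is a clique in $\bar G$ and hence lies inside a single bag of any tree decomposition, so $|N_G(C_j)|\le \tw(\bar G)+1=O(\log|V|)$. This makes $2^{|N_G(C_j)|}$ polynomial, so every table is polynomial-size and evaluation is only ever needed within bags, consistent with the $2^{O(\kappa)}$ bound.

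Finally, to recover an optimal solution, we take the optimal binary $x_{V^-}$ and solve the $s$ continuous subproblems attaining $h_j$. This is again polynomial by condition~2.
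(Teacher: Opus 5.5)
Your proposal is correct and follows the route the paper attributes to~\cite{Aida26p}: restrict $V^-$ to binary values, eliminate each continuous component $C_j$ into a function $h_j$ of $x_{N_G(C_j)}$ whose scope becomes a clique, and solve the resulting binary problem, whose interaction graph is exactly $\bar G$, via Proposition~\ref{bpoly}. Your bound $|N_G(C_j)|\le\tw(\bar G)+1\in O(\log|V|)$, which holds because $N_G(C_j)$ is a clique of $\bar G$, is the same fact the paper invokes (Remark~\ref{rem:incomparable}) to show that this elimination step runs in polynomial time.
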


\begin{remark}\label{rem:incomparable}
Theorems~\ref{th:oneBlock} and~\ref{th:smallNeighborhood} are
incomparable.
First, assumption~2 of Theorem~\ref{th:oneBlock} and that of Theorem~\ref{th:smallNeighborhood} are identical. Second, assumption~1 of Theorem~\ref{th:smallNeighborhood} implies that $|N_G(C_j)| \in O(\log |V|)$ for all $j \in [s]$. To see this, observe that each $N_G(C_j)$ is a clique in $\bar G$ and hence we have $|N_G(C_j)| \leq \tw(\bar G)+1 \in O(\log |V|)$.
That is, all continuous components have \emph{small neighborhoods}, an assumption that does not need to hold in Theorem~\ref{th:oneBlock}. To see this, for each $m\geq1$, let
$V^+=\{u\}$ and  $V^-=\{v_1,\dots,v_m\}$. Assume that
$V^-$ is a stable set and $u$ is adjacent to every vertex in
$V^-$. The connected components of $G_{V^-}$ are the singletons $\{v_i\}$, and hence
$\tw(G_{V^-})=0$ and $|D\cap N_G(V^+)|=1$
for every connected component $D$ of $G_{V^-}$. The unique connected component of $G_{V^+}$ is the singleton $\{u\}$, and
${\rm rank}(Q_{V^+,N_G(V^+)})\leq 1$.
Thus, all assumptions of Theorem~\ref{th:oneBlock} are satisfied. Now consider Theorem~\ref{th:smallNeighborhood}.
We have
$N_G(V^+)=N_G(u)=V^-$, implying that $\bar G=K_m$.
Therefore, $\tw(\bar G)=m-1$,
so assumption~1 of Theorem~\ref{th:smallNeighborhood} is violated.
Consequently, Theorem~\ref{th:oneBlock} does not follow from
Theorem~\ref{th:smallNeighborhood}.
Conversely, for each $m\geq1$, let
$V^+:=\{p_1,\dots,p_m\}$ and $V^-:=\{z_1,\dots,z_{m+1}\}$.
Let $G_{V^-}$ be the path
$z_1-z_2-\cdots-z_{m+1}$,
let $V^+$ be a stable set, and let $p_i$ be adjacent to
$z_i$ and $z_{i+1}$ for every $i\in[m]$. The connected components of $G_{V^+}$ are the singletons
$C_i:=\{p_i\}$, and
$N_G(C_i)=\{z_i,z_{i+1}\}$.
Since $\{z_i,z_{i+1}\}$ is already an edge of $G_{V^-}$, making $N_G(C_i)$ a clique adds no edge to $G_{V^-}$. Hence
$\bar G=G_{V^-}$,
and therefore $\tw(\bar G)=1$. Moreover, each $C_i$ is a singleton, so assumption~2 of Theorem~\ref{th:smallNeighborhood} is satisfied.
Thus, Theorem~\ref{th:smallNeighborhood} applies.
On the other hand, $G_{V^-}$ is connected, so it has a single
connected component $D=V^-$. Moreover, every vertex of $V^-$ is
adjacent to a vertex in $V^+$, and hence
$D\cap N_G(V^+)=V^-$.
Therefore,
$|D\cap N_G(V^+)|=m+1=\Theta(|V|)$,
and assumption~1 of Theorem~\ref{th:oneBlock} is violated.
Consequently, Theorem~\ref{th:smallNeighborhood} does not follow from Theorem~\ref{th:oneBlock}.
\end{remark}

The proof techniques of Theorems~\ref{th:oneBlock} and~\ref{th:smallNeighborhood} are fundamentally different.
To prove Theorem~\ref{th:oneBlock}, we first eliminate each binary component $D_k\setminus A_k$, $k \in [p]$ and subsequently add a clique on $A_k$. This is a polynomial-time operation thanks to the two key facts in assumption~1. In contrast, to prove Theorem~\ref{th:smallNeighborhood}, the author first eliminates each continuous component $C_j$, $j \in [s]$ and subsequently adds a clique $N_G(C_j)$ to graph $G$. This step can be performed in polynomial time, because from assumption~1 of Theorem~\ref{th:smallNeighborhood}, it follows that $|N_G(C_j)| \in O(\log |V|)$ for all $j \in [s]$. The incomparability of
Theorems~\ref{th:oneBlock} and~\ref{th:smallNeighborhood}
suggests combining the two elimination schemes. Namely, for an instance of Problem~\eqref{eq:QP} that does not satisfy the assumptions of either theorem, we first eliminate a subset of continuous components $C_j$ of $G_{V^+}$ with $|N_G(C_j)|\in O(\log |V|)$ using the technique of Theorem~\ref{th:smallNeighborhood} and
then apply Theorem~\ref{th:oneBlock} to the remaining graph.





\begin{theorem}\label{th:oneSwitch}
Let $G=(V,E)$ be the interaction graph of Problem~\eqref{eq:QP}.
Define $V^+:=\{i\in V:q_{ii}>0\}$ and $V^-:=V\setminus V^+$. Let
$C_1,\dots,C_s$ denote the connected components of $G_{V^+}$.
Suppose that the following conditions hold:
\medskip
\begin{enumerate}

\item There exists a subset $J\subseteq[s]$ such that, letting
$R:=V^+\setminus\bigcup_{j\in J}C_j$
and letting $\widehat G$ be the graph on $V^-$ obtained from
$G_{V^-}$ by making $N_G(C_j)$ a clique for every $j\in J$, we have:
\medskip
\begin{itemize}
\item [(i)]$\tw(\widehat G)\in O(\log |V|)$, and 
$|D\cap N_G(R)|\in O(\log |V|)$
for every connected component $D$ of $\widehat G$
\item [(ii)] ${\rm rank}\bigl(Q_{R,N_G(R)}\bigr)\in O(1)$.
\end{itemize}
\medskip

\item For every $j\in[s]$ and every $d\in\Q^{C_j}$, the optimization
problem~\eqref{eq:cor-simple-Cj} can be solved in time polynomial in $\L+\langle d\rangle$.

\end{enumerate}
\medskip
Then Problem~\eqref{eq:QP} can be solved in time polynomial in the
input length.
\end{theorem}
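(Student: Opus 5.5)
We combine the two elimination schemes. First we restrict to the mixed-binary problem~\eqref{mixedQP}, in which every $x_i$ with $i\in V^-$ is binary; as in the proof of Theorem~\ref{th:oneBlock}, this does not change the optimal value. Then we eliminate the continuous components $C_j$ with $j\in J$ in the manner of Theorem~\ref{th:smallNeighborhood}.

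For $j\in J$ and a fixed binary vector $u\in\{0,1\}^{N_G(C_j)}$, define
\[
\phi_j(u):=\min_{x_{C_j}\in[0,1]^{C_j}}\Big(x_{C_j}^\top Q_{C_j}x_{C_j}+\big(c_{C_j}+2Q_{C_j,N_G(C_j)}u\big)^\top x_{C_j}\Big).
\]
By assumption~2 each such value can be computed in polynomial time, since the perturbed linear term has polynomial length. Because $N_G(C_j)$ is a clique of $\widehat G$, we have $|N_G(C_j)|\le\tw(\widehat G)+1\in O(\log|V|)$. Hence all $2^{|N_G(C_j)|}$ values are computable in polynomial time, and the tables $\phi_j$ can be evaluated in polynomial time. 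Note that $N_G(C_j)\subseteq V^-$, since $C_j$ is a component of $G_{V^+}$.

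After this elimination, the problem is
\[
\min_{x_R\in[0,1]^R}\ \min_{x_{V^-}\in\{0,1\}^{V^-}}\Big(f_R(x_R)+x_{V^-}^\top Q_{V^-}x_{V^-}+c_{V^-}^\top x_{V^-}+\sum_{j\in J}\phi_j(x_{N_G(C_j)})+2x_{V^-}^\top Q_{V^-,R}x_R\Big),
\]
where $f_R(x_R):=x_R^\top Q_Rx_R+c_R^\top x_R$. There are no edges between $R$ and $C_j$ for $j\in J$, because these are distinct components of $G_{V^+}$.

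The binary part is now a pseudo-Boolean function whose interaction graph, in the sense of Proposition~\ref{bpoly}, is exactly $\widehat G$. We rerun the proof of Theorem~\ref{th:oneBlock} with $G_{V^-}$ replaced by $\widehat G$ and $V^+$ replaced by $R$. Let $D_1,\dots,D_p$ be the components of $\widehat G$, and set $A_k:=D_k\cap N_G(R)$. The objective separates over the $D_k$ for fixed $x_R$: each $\phi_j$ has its support inside a clique, and hence inside a single component, and each edge of $G_{V^-}$ lies inside a component.

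For each $z\in\{0,1\}^{A_k}$ we compute $\mu_k(z)$ by minimizing the remaining binary function on $D_k\setminus A_k$. Its interaction graph is a subgraph of $\widehat G$, so by Proposition~\ref{bpoly} this takes polynomial time. The number of such $z$ is polynomial by assumption~1(i). The coupling terms are $b_k(z)=2\sum_{i\in A_k}z_iQ_{R,i}$, and their pairwise differences lie in the column space of $Q_{R,N_G(R)}$, which has constant dimension by 1(ii). Claim~\ref{claim ADP} therefore applies verbatim, with $V^+$ replaced by $R$: the hyperplane arrangement has constant essential rank and polynomially many hyperplanes, so it yields a polynomial-size set $Z$.

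For each $z\in Z$, the inner problem is a box-constrained QP over $R$. It splits over the components $C_j$ with $j\notin J$, each with a perturbed linear term of polynomial length, and assumption~2 solves each piece. Taking the minimum over $Z$ gives the optimal value. An optimal solution is recovered by backtracking: fix $x_R$, then the binary variables, then recompute the minimizers $x_{C_j}$ for $j\in J$ from the chosen $u$.

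The main obstacle is the separation claim. We need to check that the objective decomposes over components of $\widehat G$, and not merely over components of $G_{V^-}$, once the nonquadratic table terms $\phi_j$ are present. The proof of Theorem~\ref{th:oneBlock} used the quadratic structure only through Proposition~\ref{bpoly}, which accepts general local functions, so the decomposition should go through. Two further points need verification. First, the functions $\phi_j$ must be independent of $x_R$. Second, the total input to Proposition~\ref{bpoly} must remain polynomial: the number $m$ of local terms is at most $|V|^2+s$, and each term is evaluated by table lookup.
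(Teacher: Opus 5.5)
Your proposal is correct and matches the paper's proof essentially step by step. You tabulate each $C_j$, $j\in J$, over $\{0,1\}^{N_G(C_j)}$, using $|N_G(C_j)|\le\tw(\widehat G)+1$ and assumption~2. You then rerun the proof of Theorem~\ref{th:oneBlock} with $R$ in place of $V^+$ and $\widehat G$ in place of $G_{V^-}$, invoking Proposition~\ref{bpoly} for the now non-quadratic binary part. You also check details the paper leaves implicit: the tables do not depend on $x_R$, and they separate over the components of $\widehat G$.
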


\begin{proof}
We first eliminate the continuous components $C_j$, $j\in J$, using
the proof technique of Theorem~1 in~\cite{Aida26p}. Since
$N_G(C_j)$ is a clique in $\widehat G$, assumption~1 implies that
$|N_G(C_j)|\in O(\log |V|)$ for every $j\in J$. Therefore, by
assumption~2, all such components can be eliminated in polynomial
time. We are then left with an  optimization problem of
the form
$$
\min_{\substack{x_R\in[0,1]^R\\x_{V^-}\in\{0,1\}^{V^-}}}
\Big\{
x_R^\top Q_Rx_R+c_R^\top x_R
+g(x_{V^-})
+2\sum_{i\in N_G(R)}x_iQ_{R,i}^\top x_R
\Big\},
$$
where $g$ is a binary objective function whose interaction graph is
$\widehat G$.
We now apply the proof technique of Theorem~\ref{th:oneBlock}, with
$R$ in place of $V^+$. The only difference is that the binary
objective $g(x_{V^-})$ need not be quadratic. However, when the variables
in $D\cap N_G(R)$ are fixed, for a connected component $D$ of
$\widehat G$, the remaining binary subproblem has interaction graph
contained in $\widehat G_D$. Hence, by assumption~1 and
Proposition~\ref{bpoly}, each such subproblem can still be solved
in polynomial time. Since
$|D\cap N_G(R)|\in O(\log |V|)$, all required values can be
computed in polynomial time.
The remainder of the proof of Theorem~\ref{th:oneBlock} applies
unchanged: condition~1(ii) yields the required constant-rank bound for
the hyperplane-arrangement argument, while assumption~2 guarantees
polynomial-time solvability of the remaining continuous components
under arbitrary linear perturbations. Therefore,
Problem~\eqref{eq:QP} can be solved in polynomial time.
\end{proof}

Theorems~\ref{th:oneBlock} and~\ref{th:smallNeighborhood} are special cases of Theorem~\ref{th:oneSwitch}.
Namely, letting $J=\emptyset$, we get $\widehat G=G_{V^-}$ and $R=V^+$; therefore,
Theorem~\ref{th:oneSwitch} reduces to
Theorem~\ref{th:oneBlock}. Alternatively, letting $J=[s]$, we get $R=\emptyset$ and $\widehat G = \bar G$. So assumption~(i) reduces
to $\tw(\bar G)\in O(\log |V|)$ and assumption~(ii) becomes vacuous. We thus recover
Theorem~\ref{th:smallNeighborhood}. Notice that the key to the power of Theorem~\ref{th:oneSwitch} is a careful selection of the subset $J$.
A simple rule is to let $J$ consist of the index set of all continuous components with a logarithmic-size neighborhood.  The following example shows that this rule is not sufficient. Namely, in general, among the continuous components having logarithmic-size neighborhoods, it may be necessary to eliminate only a proper subset in the first phase. 

\begin{example}\label{ex:proper-subset}
Let $G=(V,E)$ be the interaction graph of Problem~\eqref{eq:QP}.
For each integer $m\geq4$, let $\ell:=\lceil\log_2 m\rceil$.
Let $W:=\{w_1,\dots,w_m\}$ and suppose that $G_W$ is the path
$w_1-w_2-\cdots-w_m$. Moreover, for each $i\in[\ell]$, let
$D_i:=\{v_{i1},\dots,v_{i\ell}\}$ and suppose that $G_{D_i}$ is the path
$v_{i1}-v_{i2}-\cdots-v_{i\ell}$.
Assume that $W,D_1,\dots,D_\ell$ are pairwise disjoint and define
$V^-:= V \setminus V^+ = W\cup D_1\cup\cdots\cup D_\ell$.
Moreover, suppose that $W,D_1,\dots,D_\ell$ are the connected
components of $G_{V^-}$.
Define
$V^+:=\{u,c\}\cup\{b_1,\dots,b_m\}$,
and suppose that $V^+$ is a stable set. In addition to the edges of
$G_{V^-}$ specified above, suppose that $G$ contains exactly the following edges: $\{b_j,w_j\}$ for every $j\in[m]$, $\{u,w_1\}$, $\{u,v_{ij}\}$ for every $i,j\in[\ell]$, and
$\{c,v_{i1}\}$ for every $i\in[\ell]$.
We then have
$|V|=2m+\ell^2+2=\Theta(m)$, 
implying that $\ell=\Theta(\log |V|)$, while
$m\notin O(\log |V|)$.
We first show that neither Theorem~\ref{th:oneBlock} nor
Theorem~\ref{th:smallNeighborhood} applies to this instance.
Consider first Theorem~\ref{th:oneBlock}. Recall that 
$W=\{w_1,\dots,w_m\}$ is a connected component of $G_{V^-}$
and every $w_j$ is adjacent to $b_j\in V^+$. Hence
$W\cap N_G(V^+)=W$,
and therefore $|W\cap N_G(V^+)|=m\notin O(\log |V|)$.
Thus assumption~1 of Theorem~\ref{th:oneBlock} is violated.
Next consider Theorem~\ref{th:smallNeighborhood}; By construction,
$N_G(u)=\{w_1\}\cup D_1\cup\cdots\cup D_\ell$,
and hence
$|N_G(u)|=1+\ell^2$.
In the graph $\bar G$, the set $N_G(u)$ is a clique.
Therefore,
$\tw(\bar G)\geq |N_G(u)|-1=\ell^2.$
Since $\ell=\Theta(\log |V|)$, we have
$\ell^2\notin O(\log |V|)$, and hence
Theorem~\ref{th:smallNeighborhood} does not apply.

We now consider the strategy of first eliminating every connected component $C$ of $G_{V^+}$ satisfying
$|N_G(C)|\in O(\log |V|)$ and then applying
Theorem~\ref{th:oneBlock}. Since $V^+$ is a stable set, its connected components are
$\{u\},\{c\},\{b_1\},\dots,\{b_m\}$.
For every $j\in[m]$,
$N_G(b_j)=\{w_j\}$, while
$N_G(c)=\{v_{11},v_{21},\dots,v_{\ell1}\}$.
Hence $|N_G(b_j)|=1$ and $|N_G(c)|=\ell=O(\log |V|)$.
On the other hand,
$|N_G(u)|=1+\ell^2\notin O(\log |V|)$.
Thus this strategy eliminates $\{b_j\}$,
$j\in[m]$, together with $\{c\}$, while retaining $\{u\}$.
Eliminating $\{b_j\}$ creates no interaction between distinct binary variables, since $N_G(b_j)=\{w_j\}$. We next consider the elimination of $c$. In this case,
$\{v_{11},\dots,v_{\ell1}\}$ becomes a clique in the resulting graph $\bar G$ and therefore the components
$D_1,\dots,D_\ell$ are joined into a single connected component
$D^*:=D_1\cup\cdots\cup D_\ell$.
Since $u$ is adjacent to every vertex in $D^*$,
we have $|D^*\cap N_G(u)|=|D^*|
=\ell^2=\Theta((\log |V|)^2)$.
Thus the resulting instance still violates assumption~1 of
Theorem~\ref{th:oneBlock}.

Finally, consider eliminating only the collection
$\mathcal E:=\{\{b_j\}:j\in[m]\}$.
Since each $b_j$ has the single neighbor $w_j$, eliminating these
components creates no interaction between distinct binary variables.
Hence the connected components of the resulting binary interaction graph remain
$W,D_1,\dots,D_\ell$.
We then have
$|W\cap N_G(\{u,c\})|=1$ and
$|D_i\cap N_G(\{u,c\})|=|D_i|=\ell
=O(\log |V|)$ for every $i\in[\ell]$.
Moreover, $G_W$ and $G_{D_i}$, $i \in [\ell]$ are paths, and hence $\tw(G_{V^-})=1$. Thus assumption~1 of
Theorem~\ref{th:oneBlock} is satisfied.
The retained positive components $\{u\}$ and $\{c\}$ are singletons, so assumption~2 of Theorem~\ref{th:oneBlock} is satisfied. Finally,
${\rm rank}\left(Q_{\{u,c\},N_G(\{u,c\})}
\right)\leq2$,
and hence assumption~3 is satisfied as well.
Thus, eliminating all continuous components having
logarithmic-size neighborhoods does not suffice, whereas eliminating the proper subset $\mathcal E$ and then applying Theorem~\ref{th:oneBlock} yields a polynomial-time algorithm for solving Problem~\eqref{eq:QP}.
\end{example}





The next proposition indicates that the subset $J$ whose existence is required in condition~1 of
Theorem~\ref{th:oneSwitch} can be constructed in polynomial time.


\begin{proposition}\label{prop:findJ}
Under the notation of Theorem~\ref{th:oneSwitch}, whenever
condition~1 of that theorem holds, a subset $J\subseteq[s]$
satisfying condition~1 can be found in time polynomial in the
input length.
\end{proposition}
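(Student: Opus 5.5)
The plan is to show that, under condition~1(ii), only constantly many components of $G_{V^+}$ can be kept outside $J$, and then to enumerate all such choices.

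\textbf{Step 1: the complement of $J$ is small.} Read the $O(\cdot)$ bounds in condition~1 with fixed constants $\alpha,\beta,\gamma$, so that $\tw(\widehat G)\le\alpha\log|V|$, $|D\cap N_G(R)|\le\beta\log|V|$ and ${\rm rank}(Q_{R,N_G(R)})\le\gamma$. Call a component $C_j$ \emph{relevant} if $N_G(C_j)\neq\emptyset$.

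Because $C_1,\dots,C_s$ are connected components of $G_{V^+}$, we have $N_G(C_j)\subseteq V^-$. The rows of $Q_{R,N_G(R)}$ indexed by different components of $R$ are therefore disjoint blocks of rows. The nonzero entries of the block belonging to $C_j$ lie in the columns $N_G(C_j)$. Hence, up to a row permutation,
\[
{\rm rank}\bigl(Q_{R,N_G(R)}\bigr)\ \ge\ \#\{j\notin J:\ C_j\text{ relevant}\}.
\]
This holds because each relevant block has a nonzero entry, and we pick one such row from each block. I would still check whether these chosen rows can be linearly dependent across blocks, since distinct components may share columns in $V^-$.

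If that lower bound fails, I would use a weaker argument instead. Components with the same row pattern $Q_{C_j,\cdot}$ up to scaling are interchangeable, and I would then enumerate representatives of the row spaces. I expect this rank argument to be the main obstacle.

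Irrelevant components have no neighbors. Whether they belong to $J$ therefore affects neither $\widehat G$, nor $N_G(R)$, nor the rank, so we may place all of them in $J$. If the bound holds, every valid $J$ leaves out at most $\gamma$ relevant components.

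\textbf{Step 2: enumerate.} Enumerate all sets $K$ of at most $\gamma$ relevant indices; there are $O(s^{\gamma})$ of them, which is polynomial in $|V|$. For each $K$, set $J:=[s]\setminus K$ and $R:=\bigcup_{j\in K}C_j$, and build $\widehat G$ by adding the cliques on $N_G(C_j)$ for $j\in J$.

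For each candidate we check the conditions as follows.
\begin{itemize}
\item The components $D$ of $\widehat G$ and the counts $|D\cap N_G(R)|$ are computed directly.
\item ${\rm rank}(Q_{R,N_G(R)})$ is computed by Gaussian elimination in strongly polynomial time.
\item For the treewidth, we cannot decide $\tw(\widehat G)\le\alpha\log|V|$ exactly. Instead we run a constant-factor approximation for treewidth with running time $2^{O(k)}\poly(|V|)$, for instance the $5$-approximation of Bodlaender et al. With $k=O(\log|V|)$ this takes polynomial time. It either certifies width at most $5\alpha\log|V|+O(1)$ or certifies $\tw>\alpha\log|V|$.
\end{itemize}
We accept $K$ if the approximation returns a decomposition of width $O(\log|V|)$ and the other two tests pass with constants $\beta,\gamma$.

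\textbf{Step 3: correctness.} By Step~1, the valid $J$ whose existence condition~1 guarantees can be normalized to contain all irrelevant components, and it then appears among the enumerated candidates. That candidate passes all tests. Any accepted candidate satisfies condition~1 with constants enlarged by a factor of at most $5$. This suffices for the proof of Theorem~\ref{th:oneSwitch}, because that proof only uses the $O(\log|V|)$ and $O(1)$ bounds, and it can use the tree decomposition we have just computed in Proposition~\ref{bpoly}.
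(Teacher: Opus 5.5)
There is a genuine gap in Step~1, and it is the one you flagged yourself. The inequality ${\rm rank}(Q_{R,N_G(R)})\ge\#\{j\notin J: C_j\text{ relevant}\}$ is false: rows coming from different components live in the same columns of $V^-$ and can be linearly dependent. So a valid $J$ need not keep only $\gamma$ relevant components, and your enumeration over $|K|\le\gamma$ can miss every valid $J$.

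The paper's own $K_{m,m}$ example shows this. There $V^+=\{b_1,\dots,b_m\}$ and $V^-=\{w_1,\dots,w_m\}$ are stable sets and $Q_{V^+,V^-}$ is the all-ones matrix. With $J=\emptyset$, condition~1 holds: $\widehat G=G_{V^-}$ is edgeless, every component meets $N_G(R)$ in one vertex, and the rank is $1$. Yet all $m$ components are kept. Any $J\neq\emptyset$ turns $N_G(b_j)=V^-$ into a clique, so $\tw(\widehat G)=m-1$. Thus $J=\emptyset$ is the only valid choice, and once $m$ exceeds your constants every candidate of Step~2 is rejected by the treewidth test.

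Your fallback of grouping components whose rows are proportional points in the right direction, but it has two problems. It is too narrow, since pairwise non-proportional rows can still span a two-dimensional space. More importantly, it leaves open which components to put into $R$ and why the resulting candidate passes the tests.

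The missing idea, as in the paper, is to guess a \emph{basis} rather than the kept set. Let $R^*$ and $\widehat G^*$ be built from a valid $J^*$. The row space $L^*$ of $Q_{R^*,V^-}$ has dimension at most $r$, and it is spanned by rows from a set $I^*$ of at most $r$ components. So enumerate all $I\subseteq[s]$ with $|I|\le r$. For each $I$, keep \emph{every} component whose rows lie in $L_I:=\operatorname{span}\{Q_{v,V^-}:v\in\bigcup_{i\in I}C_i\}$. That is, set $J_I:=\{j:L_j\not\subseteq L_I\}$, where $L_j$ is the span of the rows $Q_{v,V^-}$, $v\in C_j$.

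For $I=I^*$ this maximal choice works, for three reasons.
\begin{itemize}
\item It gives $J_{I^*}\subseteq J^*$, so $\widehat G_{I^*}$ is a subgraph of $\widehat G^*$ and has treewidth $O(\log|V|)$.
\item The rows indexed by $R_{I^*}$ span exactly $L^*$, so the rank is at most $r$.
\item A vertex $u$ lies in $N_G(R)$ if and only if the $u$-th coordinate does not vanish identically on the row space of $Q_{R,V^-}$. Hence $N_G(R_{I^*})=N_G(R^*)$. Every component of $\widehat G_{I^*}$ lies inside a component of $\widehat G^*$, so it meets $N_G(R_{I^*})$ in $O(\log|V|)$ vertices.
\end{itemize}

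With this replacement of Step~1, your Steps~2 and~3 go through essentially as you wrote them: polynomially many candidates, exact checks of rank and components, and a constant-factor treewidth approximation.
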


\begin{proof}
Let $n:=|V|$. Since condition~1 of Theorem~\ref{th:oneSwitch} holds, there exist
constants $\alpha,\beta>0$ and $r\in\mathbb Z_{\geq0}$, independent
of $n$, and a subset $J^*\subseteq[s]$ such that, with
$R^*:=V^+\setminus\cup_{j\in J^*}C_j$ and with $\widehat G^*$
defined as in Theorem~\ref{th:oneSwitch}, we have
$\tw(\widehat G^*)\leq\alpha\log n$,
$|D\cap N_G(R^*)|\leq\beta\log n$ for every connected component
$D$ of $\widehat G^*$, and
${\rm rank}(Q_{R^*,N_G(R^*)})\leq r$.
Set $k:=\lceil\alpha\log n\rceil$ and
$h:=\lceil\beta\log n\rceil$.
For every $j\in[s]$, define
$$
L_j:=\operatorname{span}\{Q_{v,V^-}:v\in C_j\}.
$$
Moreover, for every $I\subseteq[s]$ with $|I|\le r$, define
$$
L_I:=\operatorname{span}\Big\{Q_{v,V^-}:v\in \bigcup_{i \in I} C_i\Big\},
$$
where we define $L_\emptyset:=\{0\}$. We consider only  subsets $I$ for which $\dim(L_I)\le r$. For each such $I$, define
$$J_I:=
\{j\in[s]:L_j\not\subseteq L_I\},
$$
and let
$$
R_I:=V^+\setminus\bigcup_{j\in J_I}C_j.
$$
Let $\widehat G_I$ be the graph on $V^-$ obtained from $G_{V^-}$
by making $N_G(C_j)$ a clique for every $j\in J_I$.
By the definition of $J_I$, if $C_j\subseteq R_I$, then
$L_j\subseteq L_I$. Hence
$\operatorname{span}\{Q_{v,V^-}:v\in R_I\}
\subseteq L_I$.
Conversely, for every $i\in I$ we have $L_i\subseteq L_I$, and
therefore $i\notin J_I$. Thus $C_i\subseteq R_I$ for every
$i\in I$, which implies
$L_I \subseteq
\operatorname{span}\{Q_{v,V^-}:v\in R_I\}$.
Therefore,
\begin{equation}\label{eq:findJ-span}
\operatorname{span}\{Q_{v,V^-}:v\in R_I\}=L_I.
\end{equation}
Since $R_I$ is a union of connected components of $G_{V^+}$,
there are no edges between $R_I$ and $V^+\setminus R_I$.
Consequently, $N_G(R_I)\subseteq V^-$.
Moreover, every column of $Q_{R_I,V^-}$ indexed by
$V^-\setminus N_G(R_I)$ is zero. Hence, using~\eqref{eq:findJ-span}, we get:
\begin{equation}\label{eq:findJ-rank-support}
{\rm rank}\bigl(Q_{R_I,N_G(R_I)}\bigr)
=
{\rm rank}(Q_{R_I,V^-})
=
\dim(L_I) \le r.
\end{equation}
The algorithm considers every set $I\subseteq[s]$ with
$|I|\le r$ and $\dim(L_I)\le r$. For each such $I$, it computes the connected components of $\widehat G_I$. The candidate is discarded if
$|D\cap N_G(R_I)|>h$
for some connected component $D$ of $\widehat G_I$. For every
remaining candidate, it applies the treewidth algorithm
of~\cite{Korhonen21} to $\widehat G_I$ with parameter $k$. That
algorithm runs in time $2^{O(k)}n$ and either returns a tree
decomposition of width at most $2k+1$ or determines that
$\tw(\widehat G_I)>k$. If a tree decomposition is returned, the
algorithm outputs $J_I$. By construction every subset returned in this way satisfies the requirements of condition~1 of Theorem~\ref{th:oneSwitch}.

It remains to show that, whenever condition~1 of Theorem~\ref{th:oneSwitch} holds, the algorithm returns a subset $J\subseteq[s]$. Recall the subset $J^*$ whose existence follows from condition~1, and set
$$
L^*:=\operatorname{span}\{Q_{v,V^-}:v\in R^*\}.
$$
Since the columns of $Q_{R^*,V^-}$ outside $N_G(R^*)$ are zero, the rank assumption on $J^*$ gives
$\dim(L^*)
={\rm rank}\bigl(Q_{R^*,N_G(R^*)}\bigr)
\le r$.
Choose a basis of $L^*$ consisting of rows of $Q_{R^*,V^-}$,
and let $I^*$ be the set of indices of the components containing
these basis rows; take $I^*=\emptyset$ if $L^*=\{0\}$. Then
$|I^*|\le r$. 
Since $R^*$ is a union of components $C_j$, every
$C_i$ with $i\in I^*$ is contained in $R^*$. Hence all rows
associated with these components belong to $L^*$, whereas the
chosen basis rows span $L^*$. Consequently,
\begin{equation}\label{eq:findJ-Lstar}
L_{I^*}=L^*.
\end{equation}
We next claim that
\begin{equation}\label{eq:findJ-Jsubset}
J_{I^*}\subseteq J^*.
\end{equation}
Indeed, if $j\notin J^*$, then $C_j\subseteq R^*$, and therefore
$L_j\subseteq L^*=L_{I^*}$ by~\eqref{eq:findJ-Lstar}. Hence
$j\notin J_{I^*}$, proving~\eqref{eq:findJ-Jsubset}.
By~\eqref{eq:findJ-span} and \eqref{eq:findJ-Lstar},
$$
\operatorname{span}\{Q_{v,V^-}:v\in R_{I^*}\}
=
\operatorname{span}\{Q_{v,V^-}:v\in R^*\}.
$$
Fix $u\in V^-$. By the definition of the interaction graph,
$u\in N_G(R_{I^*})$ if and only if $Q_{vu}\neq 0$ for some
$v\in R_{I^*}$. 
By the equality of the two spans, this holds if and only if the
$u$-th coordinate is nonzero in some vector of
$\operatorname{span}\{Q_{v,V^-}:v\in R^*\}$, which is equivalent
to $u\in N_G(R^*)$. Therefore,
\begin{equation}\label{eq:findJ-neighborhood}
N_G(R_{I^*})=N_G(R^*).
\end{equation}
By~\eqref{eq:findJ-Jsubset}, every clique added to
construct $\widehat G_{I^*}$ is also added to construct $\widehat G^*$.
Thus $\widehat G_{I^*}$ is a subgraph of $\widehat G^*$, and
hence
$$
\tw(\widehat G_{I^*})\le\tw(\widehat G^*)\le k.
$$
Furthermore, every connected component $D_I$ of $\widehat G_{I^*}$
is contained in some connected component $D^*$ of $\widehat G^*$. By~\eqref{eq:findJ-neighborhood},
$$
|D_I\cap N_G(R_{I^*})|
\le
|D^*\cap N_G(R^*)|
\le h.
$$
Therefore the candidate $I^*$ satisfies
$|D_I\cap N_G(R_{I^*})|\le h$
for every connected component $D_I$ of $\widehat G_{I^*}$.
Moreover, $\tw(\widehat G_{I^*})\le k$. Hence the treewidth
algorithm returns a decomposition for $\widehat G_{I^*}$, and the algorithm outputs $J_{I^*}$.
Finally, the number of sets $I$ considered is at most
$$
\sum_{\ell=0}^{\min\{r,s\}}\binom{s}{\ell}
\le
\sum_{\ell=0}^r s^\ell
=
s^{O(r)}.
$$
Since $r$ is fixed, this is polynomial in $n$. For each candidate, all required linear-algebra and graph operations take polynomial time in $\L$, while the treewidth algorithm takes $2^{O(k)}n$ time. Since $k=O(\log n)$, the total running time is polynomial in the input length.
\end{proof}

Proposition~\ref{prop:findJ} shows that the subset $J$ in
Theorem~\ref{th:oneSwitch} need not be given explicitly, since a suitable choice can be found in polynomial time whenever condition~1 holds. For certain graph classes, however, a suitable choice of $J$ can be identified directly from the graph structure. The following corollary gives such a class.
Recall that a graph is a \emph{cactus graph} (resp. \emph{block graph}) if each of its maximal biconnected components (blocks) is an edge or a cycle (resp. clique). Moreover,
a graph is a \emph{block-cactus graph} if each of its maximal biconnected components is either a clique or a cycle.

\begin{corollary}\label{cor:blockCactus}
Let $G=(V,E)$ be the interaction graph of Problem~\eqref{eq:QP}.
Define
$V^+:=\{i\in V:q_{ii}>0\}$.
Suppose that $G$ is a block-cactus graph with the clique number $\omega(G)\in O(\log |V|)$. Moreover, suppose that $G_{V^+}$ is a forest. Let $R$ be the union of the connected components of $G_{V^+}$ containing at least one cut vertex of $G$.
Suppose that the following conditions hold:
\medskip
\begin{enumerate}
\item For every connected component $D$ of $G_{V\setminus R}$, we have $|D\cap N_G(R)|\in O(\log |V|)$.

\item ${\rm rank}(Q_{R,N_G(R)})\in O(1)$.
\end{enumerate}
\medskip
Then Problem~\eqref{eq:QP} can be solved in time polynomial in the input length.
\end{corollary}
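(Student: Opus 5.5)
We apply Theorem~\ref{th:oneSwitch} with $J$ the set of indices $j\in[s]$ for which $C_j$ contains no cut vertex of $G$; then the set $R$ of Theorem~\ref{th:oneSwitch} is exactly the set $R$ of the corollary. Assumption~2 of Theorem~\ref{th:oneSwitch} is immediate. Since $G_{V^+}$ is a forest, every $G_{C_j}$ is a tree. Hence, for every $d\in\Q^{C_j}$, Problem~\eqref{eq:cor-simple-Cj} is solved in strongly polynomial time by Theorem~\ref{thm:main}, which is polynomial in $\L+\langle d\rangle$. Condition~1(ii) is hypothesis~2 of the corollary. It therefore remains to verify condition~1(i).

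\emph{Step 1: localizing $C_j$ for $j\in J$.} Fix $j\in J$. Each edge of $G_{C_j}$ lies in a unique block of $G$. Two edges lying in different blocks can be joined inside the connected set $C_j$ only by passing through a vertex common to two blocks, that is, through a cut vertex, and $C_j$ contains none. Hence $C_j$ is contained in a single block $B_j$; this is also clear if $|C_j|=1$. A vertex that is not a cut vertex has all its neighbors in its unique block, so $N_G(C_j)\subseteq B_j$. Two cases arise.
\begin{itemize}
\item If $B_j$ is a clique, then $N_G(C_j)$ is already a clique, and no edge is added.
\item If $B_j$ is a cycle, then $C_j$ is a proper subpath of it, since $G_{V^+}$ is a forest. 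Thus $|N_G(C_j)|\le 2$, and at most one chord of the cycle is added.
\end{itemize}

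\emph{Step 2: the treewidth of $\widehat G$.} By Step~1, $\widehat G$ is a subgraph of the graph $G'$ obtained from $G$ by adding, inside each cycle block, the chords just described. The treewidth of a graph is the maximum of the treewidths of its blocks, because tree decompositions glue along cut vertices. The blocks of $G'$ are cliques of size at most $\omega(G)$, or cycles with added chords. I expect the chords added to a single cycle to be pairwise non-crossing: they come from disjoint subpaths $C_j$, each chord spanning only the endpoints around its own subpath. A cycle with non-crossing chords is outerplanar, so it has treewidth at most $2$. Hence $\tw(\widehat G)\le\max\{\omega(G)-1,2\}\in O(\log|V|)$. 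This non-crossing check is the one point that needs care. If it were to fail, I would instead bound the treewidth of a cycle block with chords added by hand, arguing that each chord replaces a path through $V^+$ and so behaves like a minor of the original cycle.

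\emph{Step 3: components of $\widehat G$.} Every edge of $\widehat G$ is either an edge of $G_{V^-}$, or joins two neighbors of some $C_j$ with $j\in J$, and $C_j\subseteq V\setminus R$. In both cases its endpoints lie in the same connected component of $G_{V\setminus R}$. Hence every connected component $D$ of $\widehat G$ is contained in $D'\cap V^-$ for some connected component $D'$ of $G_{V\setminus R}$. By hypothesis~1 of the corollary, $|D\cap N_G(R)|\le|D'\cap N_G(R)|\in O(\log|V|)$. This completes condition~1(i), and Theorem~\ref{th:oneSwitch} yields the result.
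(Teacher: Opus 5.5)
Your proof is correct and matches the paper on the choice of $J$, on localizing each $C_j$ inside a single block $B_j$, on assumption~2 via Theorem~\ref{thm:main}, and on using hypothesis~2 for condition~1(ii). It differs in how $\tw(\widehat G)$ is bounded, and slightly in Step~3.

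\textbf{Treewidth bound.} The paper never looks at the chords. It notes that $\widehat G$ is a minor of $G_{V\setminus R}$: delete $C_j$ when $B_j$ is a clique, and contract the path $C_j$ into a flanking neighbor when $B_j$ is a cycle. Hence $\tw(\widehat G)\le\tw(G)\le\max\{\omega(G)-1,2\}$. This is exactly your fallback, and it makes the non-crossing check unnecessary.

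Your subgraph-of-$G'$ route also works, but the reason you give for non-crossing, that the $C_j$ are disjoint subpaths, is not enough on its own. On the cycle $1,\dots,6$, the disjoint arcs $\{2,3\}$ and $\{4,5\}$ would produce the crossing chords $\{1,4\}$ and $\{3,6\}$. What excludes this is that distinct $C_j$ are distinct components of $G_{V^+}$. So every neighbor of a $C_k$, and in particular every chord endpoint, lies in $V^-$. A chord $\{a_j,b_j\}$ could cross the chord flanking $C_k$ only if one of $a_j,b_j$ lay in the open arc $C_k\subseteq V^+$, which is impossible. With that line added, each augmented cycle block is outerplanar, and your bound $\tw(\widehat G)\le\max\{\omega(G)-1,2\}$ holds.

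\textbf{Trade-off.} Your route shows a bit more structure, namely that the augmented blocks of $G'$ remain outerplanar. The minor argument is shorter and needs no case analysis.

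\textbf{Step~3.} You prove only the inclusion that is actually needed: each component of $\widehat G$ lies in $D'\cap V^-$ for some component $D'$ of $G_{V\setminus R}$. The paper asserts the stronger statement that the components of $\widehat G$ are exactly the nonempty sets $D\cap V^-$. Yours is the leaner argument and fully suffices for condition~1(i).
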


\begin{proof}
Let $C_1,\dots,C_s$ denote the connected components of $G_{V^+}$,
and define
$$
J:=\{j\in[s]:C_j\text{ contains no cut vertex of }G\}.
$$
By the definition of $R$, we have $R=V^+\setminus\bigcup_{j\in J}C_j$.
We apply Theorem~\ref{th:oneSwitch} with this choice of $J$.
Fix $j\in J$. Since no vertex of $C_j$ is a cut vertex of $G$, all
vertices of $C_j$ belong to the same block $B$ of $G$. Moreover, every neighbor of $C_j$ also belongs to $B$. Indeed, a non-cut vertex of a graph belongs to a unique block, and the connectedness of $C_j$ therefore forces all vertices of $C_j$ and all edges incident to them to belong to the same block.
Since $G$ is a block-cactus graph, $B$ is either a clique or a cycle. If $B$ is a clique, then $N_G(C_j)$ is already a clique. If $B$ is a cycle, then $C_j$ is a path, since $G_{V^+}$ is a forest, and therefore
$|N_G(C_j)|\leq2$. The latter follows since every non-cut vertex of a cycle block has degree two.
Therefore, making $N_G(C_j)$ a clique either adds no edge
or adds the edge joining the two ends of the path through $C_j$.

We first verify that assumption~1 of Theorem~\ref{th:oneSwitch} holds.
Let $\widehat G$ be the graph on $V^-$ obtained from $G_{V^-}$ by
making $N_G(C_j)$ a clique for every $j\in J$. In a clique block, this operation adds no edge. In a cycle block, the added edge, if any, can be obtained by contracting the path through $C_j$. Consequently, $\widehat G$ is a minor of $G_{V\setminus R}$ and hence also a minor of $G$.
The treewidth of a block-cactus graph is the maximum of the
treewidths of its blocks. A clique block has treewidth one less than its cardinality, while a cycle block has treewidth at most two. Therefore,
$\tw(G)\leq\max\{\omega(G)-1,2\}\in O(\log |V|)$,
and hence $\tw(\widehat G)\in O(\log |V|)$.
Since $R$ is a union of
connected components of $G_{V^+}$,
$N_G(R)\subseteq V^-$.
For every connected component $D$ of $G_{V\setminus R}$, replacing
each $C_j$, $j\in J$, by a clique on $N_G(C_j)$ preserves
connectivity among the vertices of $D\cap V^-$. Consequently, the
connected components of $\widehat G$ are precisely the nonempty sets $D\cap V^-$, where $D$ ranges over the connected components of $G_{V\setminus R}$. Since $N_G(R)\subseteq V^-$,
we have $(D\cap V^-)\cap N_G(R)=D\cap N_G(R)$.
It follows from assumption~1 that every connected component
$\widehat D$ of $\widehat G$ satisfies
$|\widehat D\cap N_G(R)|\in O(\log |V|)$.
Thus assumption~1 of Theorem~\ref{th:oneSwitch} is satisfied.
Finally, since $G_{V^+}$ is a forest, every connected component $C_j$ of $G_{V^+}$ is a tree. Hence, by
Theorem~\ref{thm:main}, for every $j\in[s]$ and every
$d\in\Q^{C_j}$, the optimization problem~\eqref{eq:cor-simple-Cj}
can be solved in polynomial time. Thus assumption~2 of
Theorem~\ref{th:oneSwitch} is satisfied. Finally, assumption~2 of the corollary is precisely condition~1(ii) of
Theorem~\ref{th:oneSwitch}, and this completes the proof.
\end{proof}

\begin{remark}\label{rem:cactus-strict}
Corollary~\ref{cor:blockCactus} is not implied by either
Theorem~\ref{th:oneBlock} or
Theorem~\ref{th:smallNeighborhood}. To see this, consider the
following family of instances. For each $m\geq3$, let $
V^+:=\{r,b_1,\dots,b_m\}$ and $V^-:=\{w_1,\dots,w_m\}\cup\{z_1,\dots,z_m\}$.
Let $w_1,\dots,w_m$ induce a cycle, let $b_j$ be adjacent only to $w_j$ for every $j\in[m]$, and let $r$ be adjacent to $w_1$ and to every $z_i$, $i\in[m]$. There are no other edges.
It can be checked that the graph $G$ is a cactus, and $V^+$ is a stable set. The only cut vertex of $G$ belonging to $V^+$ is $r$. Hence, with the notation of Corollary~\ref{cor:blockCactus}, we have $R=\{r\}$.
The connected components of $G_{V\setminus R}$ consist of the
component $\{w_1,\dots,w_m\} \cup \{b_1,\dots,b_m\}$, and the isolated vertices $z_1,\dots,z_m$. Each of these components 
intersects $N_G(r)$ in exactly one vertex.
Therefore, assumption~1 of Corollary~\ref{cor:blockCactus} is satisfied. Moreover,
${\rm rank}(Q_{R,N_G(R)})\leq1$, so Corollary~\ref{cor:blockCactus} applies.
However, Theorem~\ref{th:oneBlock} does not apply. Indeed, the cycle $D:=\{w_1,\dots,w_m\}$ is a connected component of $G_{V^-}$ and, since $w_j$ is adjacent to $b_j\in V^+$ for every $j\in[m]$, we have
$D\cap N_G(V^+)=D$.
Thus $|D\cap N_G(V^+)|=m=\Theta(|V|)$,
violating assumption~1 of Theorem~\ref{th:oneBlock}. 
Finally, Theorem~\ref{th:smallNeighborhood} does not apply because $N_G(r)=\{w_1,z_1,\dots,z_m\}$.
In the graph obtained by making the neighborhood of every continuous component a clique, $N_G(r)$ therefore induces a clique of cardinality $m+1$. Hence the resulting graph has treewidth at least $m$, and assumption~1 of Theorem~\ref{th:smallNeighborhood} is violated.
Thus, Corollary~\ref{cor:blockCactus} contains families of instances that are covered by neither Theorem~\ref{th:oneBlock} nor Theorem~\ref{th:smallNeighborhood}.
\end{remark}

\ifthenelse {\boolean{MPA}}
{

\bigskip
\section*{Declarations}

\noindent
\textbf{Competing interests:}
A. Del Pia is on the Editorial Board of Mathematical Programming journal. The authors have no other competing interests to declare that are relevant to the content of this article.

} {
}

\medskip
\noindent
\textbf{Funding:}
A. Del Pia is partially funded by AFOSR grant FA9550-23-1-0433 and ONR grant N00014-25-1-2490. 
A. Khajavirad is in part supported by AFOSR grant FA9550-23-1-0123 and by ONR grant N00014-25-1-2491.
Any opinions, findings, and conclusions or recommendations expressed in this material are those of the authors and do not necessarily reflect the views of the Air Force Office of Scientific Research or the Office of Naval Research.

\ifthenelse {\boolean{MPA}}
{
\bibliographystyle{spmpsci}
\bibliography{biblio}
}
{
\begin{footnotesize}
\bibliographystyle{plain}
\bibliography{biblio}
\end{footnotesize}
}

\end{document}